\title{Solid lines in axial algebras of Jordan type $\tfrac{1}{2}$ and Jordan algebras}
\author{Jari Desmet}
\address{\parbox{\linewidth}{Ghent University, Department of Mathematics: Algebra and Geometry \\Krijgslaan 281 -- S25, 9000 Gent, Belgium}}
\email{\href{mailto:jari.desmet@ugent.be}{jari.desmet@ugent.be}}
\date{\today}
\keywords{axial algebras, Jordan algebras, solid subalgebras, non-associative algebras}
\subjclass[2020]{20B25, 20B27,  17A36, 17C27, 17D99}
\newsavebox{\@brx}
\newcommand{\llangle}[1][]{\savebox{\@brx}{\(\m@th{#1\langle}\)}%
  \mathopen{\copy\@brx\kern-0.5\wd\@brx\usebox{\@brx}}}
\newcommand{\rrangle}[1][]{\savebox{\@brx}{\(\m@th{#1\rangle}\)}%
  \mathclose{\copy\@brx\kern-0.5\wd\@brx\usebox{\@brx}}}
\DeclareMathOperator{\kar}{char}
\DeclareMathOperator{\id}{id}
\DeclareMathOperator{\Spec}{Spec}
\DeclareMathOperator{\Miy}{Miy}
\newcommand{\F}{\mathbb{F}}
\newcommand{\E}{\mathbb{E}}
\newcommand{\Aut}{\mathrm{Aut}}
\newcommand{\doubleflat}{\mathfrak{J}(0)}
\newcommand{\baric}{\mathfrak{J}(1)}
\newcommand{\baricquo}{\overline{\mathfrak{J}(1)}}
\newcommand{\subalg}{\llangle a,b \rrangle }
\newcommand{\oneb}{u}
\setlist[enumerate]{label = \rm(\roman*)}
\newcommand{\fusionlaw}{\mathcal{J}(\tfrac{1}{2})}
\newtheorem{theorem}{Theorem}[section]
\newtheorem{corollary}[theorem]{Corollary}
\newtheorem{lemma}[theorem]{Lemma}
\newtheorem{proposition}[theorem]{Proposition}
\theoremstyle{definition}
\newtheorem{definition}[theorem]{Definition}
\theoremstyle{remark}
\newtheorem{remark}[theorem]{Remark}
\begin{document}
	\maketitle
	\begin{abstract}	
	 We show that a primitive axial algebra of Jordan type $ \tfrac{1}{2}$ is a Jordan algebra if and only if  every $2$-generated subalgebra is \emph{solid}, a notion introduced recently by Ilya Gorshkov, Sergey Shpectorov and Alexei Staroletov.	
 As a byproduct, we show that a subalgebra generated by axes $a,b$ is solid if and only if the associator $[L_a,L_b]$ is a derivation. Moreover, we show that $2$-generated subalgebras that are not solid contain precisely $3$ axes.
	\end{abstract}	
	\section{Introduction}
	Primitive axial algebras of Jordan type $\eta$ were introduced in 2015 by John Hall,
	Felix Rehren and Sergey Shpectorov \cite{primjordan,universalaxial}. A primitive axial algebra $(A,X)$ of Jordan type $\eta$ is a non-associative algebra $A$ required to be generated by a set of primitive idempotents $X$ the multiplication of which is diagonalizable with eigenvalues $0,1,\eta$, and the eigenvectors of which multiply following a specific fusion law (see \cref{sec:prelim}).
	
	In \cite[Conjecture 4.3]{JustinSergey}, Justin McInroy and Sergey Shpectorov conjecture that the connected (in the sense of \cite[Section 3.2]{JustinSergey}) primitive axial algebras of Jordan type $\eta = \tfrac{1}{2}$ are either Jordan algebras, or quotients of Matsuo algebras, certain algebras that are constructed from $3$-transposition groups (see \cref{def:matsuo}). For $2$-generated and $3$-generated primitive axial algebras of Jordan type, this has been shown to be true (\cite{GorshkovStaroletov}), and $3$-generated algebras are in fact all Jordan algebras. Recently, Tom De Medts, Louis Rowen and Yoav Segev studied the $4$-generated case in \cite{De_Medts_2023}, though it is still unclear whether $4$-generated axial algebras are either Jordan algebras or quotients of Matsuo algebras.
	
	To further advance the structure theory of primitive axial algebras of Jordan type $\tfrac{1}{2}$, Gorshkov, Staroletov and Shpectorov introduced the idea of \emph{solid subalgebras} (also called \emph{solid lines}) in \cite{gorshkov2024solid}. These are subalgebras of an axial algebra $A$ generated by two axes for which all primitive idempotents of this subalgebra are axes of the algebra $A$. The intuition behind this notion is that in a Jordan algebra, any $2$-generated subalgebra is automatically solid by the Peirce decomposition \cite[Section III.1, Lemma 1]{jacobson}, while for non-Jordan Matsuo algebras, most $2$-generated subalgebras contain at most $3$ axes. Examples of non-Jordan Matsuo algebras containing both solid lines and non-solid lines do exist, and an example was found by Gorshkov and Staroletov (see \cite[Example 7.2]{gorshkov2024solid}), namely $M(3^3\colon S_4)$, the Matsuo algebra constructed from the group $3^3\colon S_4$ as in \cref{def:matsuo}.
	
	We prove the following three results, the first of which extends the results in \cite{gorshkov2024solid}.
	\begin{theorem}\label{thm:solid14}
		Let $(A,X)$ be  a primitive axial algebra of Jordan type $\tfrac{1}{2}$ over a field $\mathbb{F}$ with $\kar \mathbb{F} \neq 2$. Write $(\cdot,\cdot)$ for the unique Frobenius form on $A$. Given $a,b\in X$, the subalgebra $\subalg$ is solid whenever $(a,b)\neq \tfrac{1}{4}$ or $\subalg$ is not $3$-dimensional. 
	\end{theorem}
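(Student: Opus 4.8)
I sketch how I would approach the proof below.

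The plan is to show that every primitive idempotent $c$ of $L := \subalg$ is a $\fusionlaw$-axis of $A$, the only possible exception being the case $\dim L = 3$ together with $(a,b) = \tfrac14$. Normalise $(a,a) = 1$ and write $\lambda := (a,b)$. Since $a,b$ are primitive $\fusionlaw$-axes of $L$, the algebra $L$ is a $2$-generated primitive axial algebra of Jordan type $\tfrac12$, hence a Jordan algebra, and so by \cref{prop:jordan-solid} every primitive idempotent $c$ of $L$ is already a $\fusionlaw$-axis of $L$; the content of the theorem is to promote this from $L$ to $A$. I would first dispose of $\dim L \le 2$ by making the eigenspace decomposition $b = \lambda a + b_0 + b_{1/2}$ (with $b_0 \in A_0(a)$, $b_{1/2}\in A_{1/2}(a)$) explicit: $\dim L = 1$ forces $b = a$, while $\dim L = 2$ forces $b_0 = 0$ or $b_{1/2} = 0$, which pins $\lambda \in \{0,1\}$ and presents $L$ as $2\mathbf{B}$ or an algebra of dual numbers; the primitive idempotents are then handled by the (simpler specialisation of the) argument below.

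For $\dim L = 3$ I would begin by nailing down the multiplication of $L$: using $\kar\F \ne 2$, the $\fusionlaw$ fusion rules for $a$, idempotency $b^2 = b$, and the Frobenius relations $(a,a) = (b,b) = 1$, $(a,b) = \lambda$, one obtains that $L_0(a)$ and $L_{1/2}(a)$ are each $1$-dimensional, that $L$ has an identity $\oneb$, and that the entire multiplication table is controlled by $\lambda$ (the remaining norms $(b_0,b_0)$ and $(b_{1/2},b_{1/2})$ being determined up to the usual constraints). Solving $c^2 = c$ for $c = \alpha a + \beta b + \gamma(ab)$ then displays the primitive idempotents of $L$ as a conic $\mathcal C$ containing $a$ and $b$, and I would keep track of the Frobenius norm $(c,c)$ along $\mathcal C$ as a function of the coefficients and of $\lambda$; the locus where $L$ degenerates --- and where these norms or the relevant coefficient denominators vanish --- is governed by the vanishing of $4\lambda - 1$.

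The heart of the proof is to show, for each $c \in \mathcal C$, that $\ad_c$ on the whole of $A$ is semisimple with spectrum in $\{0,1,\tfrac12\}$, has $A_1(c) = \langle c\rangle$, and obeys $\fusionlaw$. The mechanism I would use: decompose $A = A_1(a)\oplus A_{1/2}(a)\oplus A_0(a)$ and write $c = c_1 + c_{1/2} + c_0$ accordingly, all three components lying in $L$ (so $c_1 \in \langle a\rangle$, $c_{1/2}\in L_{1/2}(a)$, $c_0 \in L_0(a) = \langle \oneb - a\rangle$). By the $\fusionlaw$ rules for $a$, the operator $N := \ad_{c_{1/2}}$ interchanges $A_1(a)\oplus A_0(a)$ with $A_{1/2}(a)$, while $M := \ad_{c_1} + \ad_{c_0}$ preserves each summand; moreover $\ad_{c_1}$ is a scalar multiple of $\ad_a$ and $\ad_{c_0}$ a scalar multiple of $\ad_{\oneb} - \ad_a$, so that everything is built from $\ad_a$, $\ad_b$ and the one genuinely new operator $\ad_{b_{1/2}}$. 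Feeding in the analogous decomposition with respect to $b$ pins down enough of these operators that, on each $\tau_a$-invariant ``block'' of $A$, the operator $\ad_c = M + N$ becomes an explicit small matrix; computing its characteristic polynomial, its spectrum lies in $\{0,1,\tfrac12\}$ and $\ad_c$ is diagonalisable there precisely when $4\lambda - 1 \ne 0$. Primitivity $A_1(c) = \langle c\rangle$ and the $\fusionlaw$ fusion law are then checked using the Frobenius form --- for any axis $x$ the projection of $A$ onto $A_1(x)$ along the remaining eigenspaces is $v \mapsto \tfrac{(x,v)}{(x,x)}x$, and the relevant denominators again vanish exactly in the excluded case.

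The step I expect to be the main obstacle is precisely this global control of $\ad_c$: the fusion law only constrains products, not Jordan forms, so passing from ``$c$ is a $\fusionlaw$-axis of $L$'' to ``$\ad_c$ is semisimple on all of $A$'' forces one to organise $A$ into small pieces on which $\ad_a$ and $\ad_b$ act in a controlled, essentially ``universal $2$-generated module'' fashion, and then to show that the one configuration on which $\ad_c$ could carry a Jordan block (or a forbidden eigenvalue) occurs only when $\dim L = 3$ and $(a,b) = \tfrac14$. Everything else --- the $\dim L\le 2$ reductions, cutting out $\mathcal C$, identifying $\oneb$, and the Peirce- and fusion-rule bookkeeping --- should be routine linear algebra once that module picture is in hand.
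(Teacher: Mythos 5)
Your reduction (show every primitive idempotent $c$ of $L=\subalg$ is an axis of the ambient $A$, with the exception $(a,b)=\tfrac14$ and $\dim L=3$) and your parametrisation of the idempotents of $L$ as a one-parameter family match the starting point of the paper, but the mechanism you propose for the crucial step does not work, and it is not the paper's. You want to verify semisimplicity, primitivity and fusion of $\ad_c$ on all of $A$ by decomposing $A$ into $\tau_a$-invariant ``blocks'' on which $\ad_a$, $\ad_b$ and $\ad_{b_{1/2}}$ act as explicit small matrices. No such bounded block decomposition is available: the hypotheses only say that $\ad_a$ and $\ad_b$ are diagonalisable with the $\fusionlaw$ fusion law, and this does \emph{not} determine the action of $\ad_{ab}$ (equivalently of $\ad_{b_{1/2}}$, hence of $\ad_c$ for $c\in L$) on $A$ --- in particular on $A_{1/2}(a)$, where the fusion law only confines products to $A_0(a)\oplus A_1(a)$. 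Worse, your expected conclusion ``$\ad_c$ is diagonalisable with the right spectrum precisely when $4\lambda-1\neq 0$'' cannot be the output of any computation internal to $L$ and $\lambda=(a,b)$: over an algebraically closed field all toric lines are isomorphic as algebras, and for $(a,b)=\tfrac14$ solidity genuinely depends on the ambient algebra (lines with $(a,b)=\tfrac14$ are solid in every Jordan algebra by \cref{prop:jordan-solid}, yet fail to be solid in the Matsuo algebra $M(3^3{:}S_4)$). What is special about $\lambda=\tfrac14$ is not a degeneration of $L$ or of $\ad_c$, but the fact that the Miyamoto orbit of $\{a,b\}$ inside $L$ has only three elements (\cref{cor:order314}).

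That observation is the missing idea. The paper converts ``every primitive idempotent $a_\mu$ of $L$ is an axis of $A$'' into the vanishing of explicit $A$-valued polynomials in the parameter $\mu$ of degree at most $4$ (the maps $P_{x,y}$, $Q_x$ of \cref{lem:polytocheck}, which encode both the eigenvalue identity $c(cx)=\tfrac12(cx+(c,x)c)$ and the requirement that $x\mapsto x+4(c,x)c-4cx$ be an automorphism of $A$). The known axes in the orbit $a^{\langle\tau_a,\tau_b\rangle}\cup b^{\langle\tau_a,\tau_b\rangle}$ --- and here the Miyamoto involutions being automorphisms of the \emph{whole} of $A$ is the global input your sketch never uses --- give roots of these polynomials: at least $5$ in the generic toric case, $3$ (resp.\ $p\geq 3$ or infinitely many) in the unipotent cases of degree $\leq 2$, forcing the polynomials to vanish identically; the remaining boundary case of exactly four toric axes ($\mu^4=1$) needs an extra coefficient computation with the Frobenius form and $\tau_a$ as in \cref{prop:autotoric}. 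Without some substitute for this ``many axes versus low degree'' argument, your plan has no way to pass from the axiality of $a$ and $b$ to the axiality in $A$ of the remaining idempotents of $L$, so as it stands the proposal has a genuine gap at exactly the step you flag as the main obstacle.
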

	\begin{theorem}\label{thm:solidiffderiv}
		Let $(A,X)$ be  a primitive axial algebra of Jordan type $ \tfrac{1}{2}$ over a field $\mathbb{F}$ with $\kar \mathbb{F} \neq 2$. For every $a,b\in X$ with $a\neq b$, the subalgebra $\subalg$ is solid if and only if the associator $[L_a,L_b]$ is a derivation of $A$.
	\end{theorem}
	\begin{theorem}\label{thm:allsolidisjordan}
			Let $(A,X)$ be  a primitive axial algebra of Jordan type $\tfrac{1}{2}$ over a field $\mathbb{F}$ with $\kar \mathbb{F} \neq 2$. If $\kar\F =3$, assume moreover that $X$ linearly spans $A$. Then the subalgebra $\subalg$ is solid for all $a,b\in X$ if and only if $A$ is a Jordan algebra.
	\end{theorem}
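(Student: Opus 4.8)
The plan is to show that the hypothesis forces $A$ to be an \emph{almost Jordan} algebra; since a primitive axial algebra is by definition generated by the primitive idempotents in $X$, \cref{prop:almostisjordan} then yields that $A$ is a Jordan algebra. By the characterisation recalled in the introduction, ``$A$ almost Jordan'' means that every associator $D_{u,v}$ with $u,v\in A$ is a derivation of $A$; since $(u,v)\mapsto D_{u,v}$ is bilinear and $\mathrm{Der}(A)$ is a subspace of $\mathrm{End}(A)$, it is enough to prove $D_{a,b}\in\mathrm{Der}(A)$ for all $a,b$ ranging over a spanning set of $A$. So I would first show that the hypothesis forces $A=\mathrm{span}_{\mathbb{F}}(X)$: for $a,b\in X$ the line $\subalg$ is known to be at most $3$-dimensional and is $\tau_a$-invariant, and when it is $3$-dimensional the set $\{a,b,\tau_a(b)\}$ must be a basis of it (otherwise $\subalg$ would collapse to dimension $\le 2$); since $\subalg$ is solid, $\tau_a(b)\in X$, so in every case $ab\in\mathrm{span}_{\mathbb{F}}(X)$, and an induction on the length of products of axes gives $A=\mathrm{span}_{\mathbb{F}}(X)$.

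The heart of the proof is then to establish $D_{a,b}\in\mathrm{Der}(A)$ for $a,b\in X$. When $\dim\subalg\le 2$ this follows from a direct computation with the fusion law (indeed $D_{a,b}$ vanishes identically on $A$). The essential case is $L:=\subalg$ three-dimensional: being $2$-generated of Jordan type $\tfrac12$, $L$ is a Jordan algebra, and solidity means all of its primitive idempotents are Jordan type $\tfrac12$ axes of $A$, forming a one-parameter family $\{e_t\}$ of axes with $e_0=a$ and $e_{t_0}=b$ for some $t_0$. Using $\tau_e=1-8L_e+8L_e^2$ (valid since $\kar\mathbb{F}\neq2$ and $\eta=\tfrac12$), the family $t\mapsto\tau_{e_t}\tau_a$ is a rational curve of automorphisms of $A$ which equals $\mathrm{id}$ at $t=0$, so its linear coefficient at $t=0$ is a well-defined derivation $\delta\in\mathrm{Der}(A)$; a computation with the explicit structure constants of the $3$-dimensional Jordan type $\tfrac12$ line then shows $\delta=\lambda D_{a,b}$ for some $\lambda\in\mathbb{F}^{\times}$, whence $D_{a,b}\in\mathrm{Der}(A)$. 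Combined with the first step and bilinearity of $D$, this gives that every $D_{u,v}$ ($u,v\in A$) is a derivation, so $A$ is almost Jordan, and \cref{prop:almostisjordan} completes the proof.

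I expect the main obstacle to be this second step, and specifically the computation identifying the derivation $\delta$ coming from the family of axes with a nonzero multiple of $D_{a,b}$: this is exactly where the value $\eta=\tfrac12$ and the precise structure constants of the $3$-dimensional line enter, and it also requires a careful description of solid $3$-dimensional lines — the family of axes, a rational parametrisation of it, and the fact that $a$ is a smooth point — so that the ``linear coefficient'' is meaningful. Working with the linear coefficient of a rational family of automorphisms (rather than an analytic derivative) keeps everything valid for any field with $\kar\mathbb{F}\neq2$, and the borderline case $(a,b)=\tfrac14$ from \cref{thm:solid14}, where lines may fail to be solid, is the one deserving particular attention. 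A more pedestrian alternative to the whole second step would be to verify the derivation identity $D_{a,b}(cd)=D_{a,b}(c)\,d+c\,D_{a,b}(d)$ directly for $c,d\in X$, using solidity of $\subalg$ and of $\langle\langle c,d\rangle\rangle$ together with the fusion law, at the cost of a substantially longer case analysis.
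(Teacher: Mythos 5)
Your overall strategy (solid lines $\Rightarrow$ the maps $D_{a,b}$ are derivations $\Rightarrow$ $A$ is almost Jordan $\Rightarrow$ $A$ is Jordan via \cref{prop:almostisjordan}) is the same as the paper's, and your second step is essentially the paper's \cref{cor:derall}: the ``linear coefficient of a rational family of automorphisms'' is made rigorous there by base change to the dual numbers (\cref{prop:dualnumbers,prop:dualderiv}), which also handles small fields. One parenthetical claim is wrong, though harmless: for the $2$-dimensional baric quotient $\baricquo$, where $b = a + \tfrac12 v$ with $v\neq 0$ a $\tfrac12$-eigenvector of $a$, the map $D_{a,b}$ does not vanish identically (already $D_{a,v}(a) = -\tfrac14 v$); it is merely a derivation.

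The genuine gap is your first step. The claim $A = \mathrm{span}_\F(X)$ is false in general: $X$ is a prescribed generating set (for instance two axes generating a $3$-dimensional line), and nothing forces it to be closed under Miyamoto involutions. Your argument hinges on ``since $\subalg$ is solid, $\tau_a(b)\in X$'', which conflates being an axis of $A$ (true for $\tau_a(b)$ even without solidity, because $\tau_a\in\Aut(A)$) with membership in the set $X$ (false in general). What is true is that the Miyamoto closure $\overline{X} = X^{\Miy(A,X)}$ spans $A$ linearly (the paper invokes \cite[Corollary~1.2]{primjordan} for this). But then, to conclude that $A$ is almost Jordan via \cref{cor:almostsolid}, you need $D_{a,b}$ to be a derivation --- equivalently, by \cref{thm:solidderiv}, the line to be solid --- for all pairs $a,b\in\overline{X}$, whereas the hypothesis only provides solidity for pairs from $X$. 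Propagating solidity from $X$ to $\overline{X}$ is a substantive missing step; it is precisely the content of the paper's \cref{solid3gen} (if $\subalg$ and $\langle\langle a,c\rangle\rangle$ are solid, then so is $\langle\langle a, c^{\tau_b}\rangle\rangle$, with a separate argument in the degenerate flat situation $(a,b)=(a,c)=(a,bc)=0$) combined with an induction on word length in the Miyamoto group, and it uses the derivation criterion \cref{thm:solidderiv} in both directions. Without this, your proposal only proves the theorem under the additional assumption that $X$ is Miyamoto-closed (or already spans $A$).
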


	We give an overview of the new ideas that we needed to prove these results.
	
	The crucial step in proving \cref{thm:solid14}, which is done in \cref{sec:thm11}, is finding polynomials $P_{x,y},Q_x$ (defined in \cref{lem:polytocheck}) of small degree such that axes in $\subalg$ and roots of these polynomials correspond to each other in a subtle way. Using this, we can prove that once we have enough axes in $\subalg$, all primitive idempotents have to be axes, since then the defined polynomials would have to be identically zero. The proof given here extends the results of \cite{gorshkov2024solid} to all $2$-generated subalgebras in arbitrary characteristic, and the result is sharp in the sense that counterexamples exist in every characteristic when one of the conditions in the theorem does not hold. The method of proof in \cite{gorshkov2024solid} is computational in nature, and requires the classification of $3$-generated primitive axial algebras of Jordan type. Moreover, in \cite{gorshkov2024solid}, the techniques used for different types (see \cref{sec:2gen}) of $2$-generated subalgebras vary from case to case. We present a more conceptual and uniform method of proof, relying only on the classification of $2$-generated subalgebras.
		
		Intuitively, a primitive axial algebra of Jordan type $ \tfrac{1}{2}$ containing solid lines is equivalent to its automorphism group scheme having positive dimension. \cref{thm:solidiffderiv} quantifies this, as a subalgebra $\subalg\subseteq A$ is solid if and only if the associator defined by $D_{a,b}(x) \coloneqq [L_a,L_b](x) = a(bx)-b(ax)$ is a derivation of $A$. To prove this statement (\cref{sec:thm12}) we continue using the class of polynomials $P_{x,y},Q_x$ from the previous paragraph, but we need to use base change techniques to obtain that $D_{a,b}$ is a derivation if $\subalg$ is solid.
		
		In characteristic zero, the converse direction is obtained by taking the exponential of the derivation, but in positive characteristic other techniques are required. By studying the roots of the polynomials $P_{x,y},Q_x$ over  a bigger base ring, we obtain more information about the multiplicity of roots over the ground field. Using this, we can prove \cref{thm:solidiffderiv}.
				
		Rings $A$ with $2\in A^\times$ such that $D_{a,b}$ is a derivation for all $x,y\in A$ are called \emph{almost Jordan rings}, a concept that was first studied by Marshall Osborn \cite{almostjordan,identityfour}. 
		Now, it turns out that an almost Jordan algebra spanned by primitive idempotents will always be a Jordan algebra. We then combine \cref{thm:solid14,thm:solidiffderiv} to finally obtain \cref{thm:allsolidisjordan} by an induction argument, see \cref{sec:thm13}. 
		
		The methods used to prove \cref{thm:allsolidisjordan} allow us to interpret \cref{thm:solidiffderiv} in an interesting way. It tells us that the subspace of $D_{x,y}$ with $x,y\in A$ such that $D_{x,y}$ is a derivation, quantifies how close the algebra is to being Jordan. In contrast, subalgebras $\subalg$ for which $D_{a,b}$ is not a derivation correspond to two involutions, the product of which has order 3. This shows in a very concrete way the dichotomy between Jordan and Matsuo algebras. We believe these results give a better idea of the general structure of primitive axial algebras of Jordan type $\tfrac{1}{2}$, and will lead to more examples.
	
	\subsection*{Acknowledgements}
	The author is supported by the FWO PhD mandate 1172422N. The author is grateful to his supervisor Tom De Medts for his guidance, to Sergey Shpectorov for some very insightful discussions, and to Alexei Staroletov for expositional improvements. The author also wants to thank an anonymous referee for their thorough reading, considerably improving the quality of the paper.
	\section{Preliminaries}\label{sec:prelim}
	The first ingredient we need to define axial algebras is the concept of a \emph{fusion law}.
	\begin{definition}
	A fusion law $(\mathcal{F},\star)$ is a set $\mathcal{F}$ with a map $\star \colon \mathcal{F}\times\mathcal{F} \to 
2^{\mathcal{F}}$. Here $2^\mathcal{F}$ denotes the set of all subsets of $\mathcal{F}$.
	\end{definition}	
	A fusion law can be denoted using a table, just as as any other binary operation. If the product of $a,b\in \mathcal{F}$ is the empty set, we omit the entry. We also omit the set brackets $\{\}$ in the entries.	 The fusion law we are interested in is the Jordan fusion law $\mathcal{J}(\eta)$ (\cref{table:jordanmonster}) with parameter $\eta=\tfrac{1}{2}$.
	\begin{table}
		\[
		\begin{array}{c || c | c | c}
			\star & 1&0 &\eta \\
		\hline
		\hline
		1& 1 &  & \eta\\
		\hline
		0 & & 0 & \eta \\
		\hline
		\eta & \eta & \eta & 0,1 
		\end{array}
		\]
		\caption{The Jordan fusion law $\mathcal{J}(\eta)$}\label{table:jordanmonster} 
	\end{table}
	Using these fusion laws, we can now define \emph{axial algebras.}
	\begin{definition} Let $R$ be a unital, associative, commutative ring, let $(\mathcal{F},\star)$ be a fusion law with $\mathcal{F}\subseteq R$ and $A$ a commutative non-associative $R$-algebra.
		\begin{enumerate}
			\item For $a\in A$, let $L_a$ denote the endomorphism of $A$ defined by $L_a(x) = ax$ for all $x\in A$. If $\lambda\in R$ is an eigenvalue of $L_a$, the $\lambda$-eigenspace will be denoted by $A_\lambda(a)$.
			\item A non-zero idempotent $a\in A$ is an $\mathcal{F}$-axis if $L_a$ is semisimple, $\Spec(L_a)\subseteq \mathcal{F}$ and for all $\lambda,\mu\in \Spec(L_a)$:
			\[ A_{\lambda}(a)A_{\mu}(a) \subseteq \bigoplus_{\nu \in \lambda \star \mu} A_{\nu}(a).\]
			An $\mathcal{F}$-axis is \emph{primitive} if $A_1(a) = \langle a \rangle$.
			\item $(A,X)$ is a (primitive) $\mathcal{F}$-axial algebra if $X\subset A$ is a set of (primitive) $\mathcal{F}$-axes that generate $A$. If  $|X| = k\in \mathbb{N}$, we will call $(A,X)$ a $k$-generated $\mathcal{F}$-axial algebra.
		\end{enumerate}
	\end{definition}
	We will often write $A$ instead of $(A,X)$ for convenience, and will only consider primitive $\fusionlaw$-axial algebras. We will call these algebras \emph{primitive axial algebras of Jordan type $\tfrac{1}{2}$}.
	
	Note that we defined axial algebras over rings instead of fields. This is because we want to make some arguments which require base change techniques, see e.g.\@ \cref{prop:dualnumbers}. 
	
	A basic result in the theory of axial algebras is the Seress lemma.
	\begin{definition}[Seress property]
		A fusion law $\mathcal{F}$ is called Seress if both $0,1\in \mathcal{F}$ and for every $\lambda \in \mathcal{F}$ we have $1\star \lambda, 0\star \lambda \subseteq \{\lambda\}$.
	\end{definition}
	\begin{lemma}[Seress lemma, {\cite[Proposition 3.9]{universalaxial}}]\label{lem:seress}
		Let $A$ be an axial algebra with fusion law $\mathcal{F}$. If $\mathcal{F}$ is Seress, then every axis $a$ associates
		with $A_1(a) \oplus A_0(a)$.
	\end{lemma}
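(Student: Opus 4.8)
The plan is to argue straight from the eigenspace decomposition of $\ad(a)$, staying entirely within the elementary framework of the definition of an axis. Since $a$ is an $\mathcal{F}$-axis, $\ad(a)$ is semisimple, so $A = \bigoplus_{\lambda\in\Spec(\ad(a))} A_\lambda(a)$, and each $u\in A$ is a finite sum $u=\sum_\lambda u_\lambda$ of eigenvectors $u_\lambda\in A_\lambda(a)$. First I would unravel the statement: ``$a$ associates with $A_1(a)\oplus A_0(a)$'' amounts to the identity $a(uz) = (au)z$ for all $u\in A$ and all $z\in A_1(a)\oplus A_0(a)$ (the ``reversed'' associator $(z,u,a)=0$ then being automatic from commutativity of $A$). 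As both sides are bilinear in $(u,z)$, it suffices to treat $u=u_\lambda\in A_\lambda(a)$ and $z=z_\varepsilon\in A_\varepsilon(a)$ with $\varepsilon\in\{0,1\}$; eigenvalues not occurring in $\Spec(\ad(a))$ contribute nothing.

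The crux is a single use of the fusion rule together with the Seress hypothesis. Because $\mathcal{F}$ is Seress, $\varepsilon\star\lambda\subseteq\{\lambda\}$, so the fusion rule forces $u_\lambda z_\varepsilon \in \bigoplus_{\nu\in\lambda\star\varepsilon} A_\nu(a) \subseteq A_\lambda(a)$; that is, $u_\lambda z_\varepsilon$ is again a $\lambda$-eigenvector of $\ad(a)$ (or zero). Consequently $a(u_\lambda z_\varepsilon) = \lambda\,u_\lambda z_\varepsilon$, whereas $(au_\lambda)z_\varepsilon = (\lambda u_\lambda)z_\varepsilon = \lambda\,u_\lambda z_\varepsilon$ as well, so the two sides agree. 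Summing over the finitely many $\lambda$ and over $\varepsilon\in\{0,1\}$ recovers $a(uz)=(au)z$ in general, and one more appeal to commutativity finishes the proof.

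I do not anticipate a real obstacle: the lemma is in essence a bookkeeping consequence of semisimplicity and the Seress axiom. The only points deserving a moment's care are that one should reduce to eigenvectors \emph{before} applying the fusion rule, since in the definition of an axis that rule is only asserted for $\lambda,\mu\in\Spec(\ad(a))$, and that if $0$ (or $1$) is not an eigenvalue of $\ad(a)$ then the corresponding summand of $A_1(a)\oplus A_0(a)$ is zero, so the reduction loses nothing.
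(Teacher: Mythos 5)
Your proof is correct, and it is the standard eigenvector-decomposition argument: the paper itself gives no argument for this lemma but simply cites \cite[Proposition~3.9]{universalaxial}, whose proof runs along essentially the same lines, so there is nothing to bridge. The only cosmetic point is that the Seress condition is stated as $\varepsilon\star\lambda\subseteq\{\lambda\}$ with $\varepsilon\in\{0,1\}$ on the left, so to conclude $u_\lambda z_\varepsilon\in A_\lambda(a)$ you should apply the fusion rule to the pair $(\varepsilon,\lambda)$ (or first use commutativity of $A$) rather than to $(\lambda,\varepsilon)$; your care in reducing to $\lambda\in\Spec(\ad(a))$ first and in handling the case $0,1\notin\Spec(\ad(a))$ is exactly right.
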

	What makes the Jordan fusion law interesting to study is its connection to group theory. 
	\begin{definition}[{\cite[Definition~2.5]{moduleaxial}}]
	\leavevmode
	\begin{enumerate}
        \item
            A fusion law $(\mathcal{F},\star)$ is called \emph{$\mathbb{Z}/2\mathbb{Z}$-graded} if $\mathcal{F}$ can be partitioned into two subsets $\mathcal{F}_+$ and $\mathcal{F}_-$ such that
            \begin{align*}
                \lambda \star \mu \subseteq \mathcal{F}_+ & \text{ whenever }  \lambda, \mu  \in \mathcal{F}_+, \\
                 \lambda \star \mu  \subseteq \mathcal{F}_+ & \text{ whenever }  \lambda , \mu  \in \mathcal{F}_-, \\
                 \lambda \star \mu  \subseteq \mathcal{F}_- & \text{ whenever } \lambda \in \mathcal{F}_+ \text{ and } \mu \in \mathcal{F}_- \text{ or } \lambda \in \mathcal{F}_- \text{ and } \mu \in \mathcal{F}_+ .
            \end{align*}
        \item
            Let $(A,X)$ be a $(\mathcal{F},\star)$-axial algebra for some $\mathbb{Z}/2\mathbb{Z}$-graded fusion law $(\mathcal{F},\star)$.
            We associate to each $(\mathcal{F},\star)$-axis $a$ of $A$ a \emph{Miyamoto involution} $\tau_a \in \Aut(A)$ defined by linearly extending
            \[
            x^{\tau_a} = \begin{cases}
                \phantom{-}x & \text{if $x \in A_{\mathcal{F}_+}(a)$}, \\
                -x & \text{if $x \in A_{\mathcal{F}_-}(a)$}.
            \end{cases}
            \]
            Since the fusion law is $\mathbb{Z}/2\mathbb{Z}$-graded, these maps define automorphisms of $A$.
        \item
            We call the subgroup $\langle \tau_e \mid e \in X \rangle \leq \Aut(A)$ the \emph{Miyamoto group} of the axial algebra $(A,X)$,
            and we denote it by $\Miy(A, X)$.
	\end{enumerate}
	\end{definition}
	Other than Jordan algebras, the most basic example of Jordan type axial algebras are so-called \emph{Matsuo algebras}. They arise from $3$-transposition groups, objects that have been studied extensively by Jonathan Hall and Hans Cuypers among others \cite{cuypershall,hallsoicher}. 
	\begin{definition}\label{def:matsuo}
	\begin{enumerate}
		\item A \emph{$3$-transposition group} $(G,D)$ is a group $G$ generated by a conjugacy class of involutions $D$ such that the product of any two elements in $D$ has order at most $3$.
		\item Given a field $\F$, a $3$-transposition group $(G,D)$ and $\eta\in \F$, we write $M_{\eta}(\F,(G,D))$ for the algebra $\F D$ with multiplication
			\[ a \cdot b \coloneqq \begin{cases}
				a &\text{ if } a=b,\\
				0 &\text{ if } o(ab) =2,\\
				\tfrac{\eta}{2}(a+b-a^{b}) &\text{ if } o(ab)=3,
			\end{cases}\]
			where $a^b \coloneqq bab\in D$ is the conjugate of $a$ by $b$ in the group $G$.
			When all parameters are clear from context, we also write $M(G)$ for $M_{\eta}(\F,(G,D))$.
		\item We call direct sums of the algebras above \emph{Matsuo algebras}.
	\end{enumerate}
	\end{definition}
	It is easy to check that Matsuo algebras $(M,D)$ with parameter $\eta$ are primitive axial algebras with respect to the fusion law $\mathcal{J}(\eta)$. In fact, whenever $\eta \neq \tfrac{1}{2}$, these are essentially the only examples of primitive $\mathcal{J}(\eta)$-axial algebras \cite[Theorem~1.3]{primjordan}. In \cite[Section~4]{moduleaxial}, it was shown that the Miyamoto group of a Matsuo algebra returns the group you started with, quotiented by a central subgroup.	
	When $\eta=\tfrac{1}{2}$, the situation is much less clear. Shpectorov, Gorshkov and Staroletov introduced the notion of solidness of $2$-generated subalgebras of a primitive $\fusionlaw$-axial algebra as a tool to help classify these algebras. For convenience, we will sometimes call $2$-generated subalgebras \emph{lines}. 
	For ease of notation, we will often write the base change of an $\F$-algebra $A$ with a unital, commutative, associative $\F$-algebra $R$ by $A_R \coloneqq A\otimes_\F R$, as is commonly done. 
	\begin{definition}\label{def:solid}
		Let $(A,X)$ be  a primitive axial algebra of Jordan type $\tfrac{1}{2}$ over a field $\F$ with $\kar \F \neq 2$. For every $a, b\in X$ with $a\neq b$ we will call the $2$-generated subalgebra $B= \subalg $  \emph{solid} if every primitive idempotent of $\subalg_\E$ satisfies the Jordan fusion law in $A_\E$, for an (and thus every) algebraic closure $\E$ of $\F$.
	\end{definition}
	Note that this definition differs from \cite[Definition~5.1]{gorshkov2024solid}. The only real difference is for the field of three elements $\F_3$, as we will see in \cref{cor:polyzeroifsolid}. In that case, \cite[Definition~5.1]{gorshkov2024solid} would consider every $2$-generated subalgebra solid by \cref{lem:nonalgtoric,lem:allprimflat,lem:allprimbaric,cor:finiteaxesflat,cor:finiteaxesbaric}, which is not desired, see \cref{thm:allsolidisjordan,rem:counterex3}.
	\begin{lemma}\label{prop:jordan-solid}
		Every $2$-generated subalgebra in a Jordan algebra $A$ is solid.
	\end{lemma}
	\begin{proof}
		Every idempotent in $A_\E$, with $\E$ an algebraic closure, satisfies the Pierce decomposition \cite[Section III.1, Lemma 1]{jacobson}, hence also the $\fusionlaw$ fusion law. So every $2$-generated subalgebra is solid.
	\end{proof}
	The last tool that we will need is the existence of a \emph{Frobenius form}.
	\begin{definition}
		A bilinear form $(\cdot,\cdot)$ on an $\F$-algebra $A$ is a \emph{Frobenius form} if
		\[ (a,b c) = (a b,c) \text{ for all $a,b,c\in A$.} \]
	\end{definition}	
	\begin{lemma}[{\cite[Theorem~4.1 and Lemma~4.3]{frobeniusform}}]\label{lem:frobform}
		Given a primitive axial algebra $(A,X)$ of Jordan type $\tfrac{1}{2}$ over a field $\F$, $\kar \F \neq 2$, there exists a unique Frobenius form $(\cdot,\cdot)$ on $A$ such that $(a,a)=1$ for every $a\in X$ and $(\cdot,\cdot)$ is invariant under $\Aut(A)$.
	\end{lemma}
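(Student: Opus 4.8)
The plan is to prove the three claims of the lemma — uniqueness, existence, and $\Aut(A)$-invariance — in that order. Uniqueness is where the hypotheses $\eta=\tfrac12$ and $\kar\F\neq 2$ enter, existence is where the classification of $2$-generated subalgebras is needed, and invariance then follows from uniqueness.

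\emph{Uniqueness.} Let $(\cdot,\cdot)$ be a Frobenius form on $A$ with $(a,a)=1$ for every $a\in X$. Fix $a\in X$; as $a$ is a primitive $\fusionlaw$-axis we have $A=\langle a\rangle\oplus A_0(a)\oplus A_{\eta}(a)$ with $\eta=\tfrac12$, so each $x\in A$ is uniquely $x=\varphi_a(x)\,a+x_0+x_\eta$ with $\varphi_a(x)\in\F$, $x_0\in A_0(a)$, $x_\eta\in A_\eta(a)$. From $a\cdot a=a$ and the Frobenius identity,
\[
(a,x_0)=(a\cdot a,x_0)=(a,a\cdot x_0)=(a,0)=0,\qquad (a,x_\eta)=(a\cdot a,x_\eta)=(a,a\cdot x_\eta)=\eta\,(a,x_\eta),
\]
so $(1-\eta)(a,x_\eta)=0$; since $\eta=\tfrac12$ and $\kar\F\neq 2$ this forces $(a,x_\eta)=0$, hence $(a,x)=\varphi_a(x)$. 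Thus the form is determined on $X\times A$. Since $X$ generates $A$, every element of $A$ is an $\F$-linear combination of non-associative monomials in $X$, and a monomial of length $\geq 2$ factors as $u\cdot v$ with $u,v$ of strictly smaller length; the Frobenius identity gives $(u\cdot v,x)=(u,v\cdot x)$, so induction on the length of the first argument (base case above) determines the form on all of $A\times A$. Hence two normalized Frobenius forms coincide.

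\emph{Existence.} I would assemble the form from the $2$-generated subalgebras $\subalg$ classified in Section~3, each of which is one of the explicitly described algebras there (at most $3$-dimensional): on each of these one writes down directly a symmetric bilinear form that associates with the product and gives each generating axis self-value $1$, and by the uniqueness above this local form is canonical. Gluing the local forms into a single bilinear form on $A$ amounts to verifying that the rule ``$(a,x):=\varphi_a(x)$ for $a\in X$, extended by $(u\cdot v,x):=(u,v\cdot x)$'' is well defined, i.e.\ independent of how an element is written through monomials; this incorporates the symmetry $\varphi_a(b)=\varphi_b(a)$ and its higher-length analogues and is the one step that genuinely needs the $2$-generated classification — the main obstacle of the lemma. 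Rather than redo it, I would invoke the construction of the Frobenius form for primitive axial algebras of Jordan type from \cite{primjordan}. (The resulting form is moreover symmetric: one checks $(x,u\cdot v)=(u,v\cdot x)=(u\cdot v,x)$ directly from commutativity and the Frobenius identity, and the remaining case of two axes in $X$ is again $\varphi_a(b)=\varphi_b(a)$.)

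\emph{Invariance.} For $g\in\Aut(A)$, put $(x,y)_g:=(g^{-1}x,g^{-1}y)$. Since $g$ preserves the product,
\[
(x,y\cdot z)_g=\bigl(g^{-1}x,(g^{-1}y)\cdot(g^{-1}z)\bigr)=\bigl((g^{-1}x)\cdot(g^{-1}y),g^{-1}z\bigr)=(x\cdot y,z)_g,
\]
so $(\cdot,\cdot)_g$ is again a Frobenius form. As $g^{-1}(a)$ is again a primitive $\fusionlaw$-axis of $A$ and every primitive axis of $A$ has self-value $1$ in the form constructed above, $(a,a)_g=(g^{-1}a,g^{-1}a)=1$ for all $a\in X$; by the uniqueness already proved, $(\cdot,\cdot)_g=(\cdot,\cdot)$, i.e.\ the Frobenius form is invariant under $\Aut(A)$.
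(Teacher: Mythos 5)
The paper does not prove this lemma at all: it is quoted directly from the literature (the proof is the citation \cite[Theorem~4.1 and Lemma~4.3]{frobeniusform}). Measured against that, your uniqueness argument is correct and standard (the projection identity $(a,x)=\varphi_a(x)$ for $a\in X$, then induction on the length of monomials via the Frobenius identity), and deferring existence to the construction in the literature is exactly what the paper itself does, so those two parts are fine.

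The genuine gap is in the invariance step. To conclude $(a,a)_g=(g^{-1}a,g^{-1}a)=1$ for $a\in X$ you invoke the claim that \emph{every} primitive $\fusionlaw$-axis of $A$ --- not only the elements of $X$ --- has self-value $1$ with respect to the normalized form. This is a nontrivial statement, it is not proved in your proposal, and it does not follow from the uniqueness you established: uniqueness pins the form down only after normalizing at the generating set $X$, and $g^{-1}(a)$ need not lie in $X$, nor in any subalgebra generated by two elements of $X$. Nor can you get it by restricting the form to $\langle\langle a, c\rangle\rangle$ for $c=g^{-1}(a')$ and a single $a\in X$: on the flat algebra $\doubleflat$ the space of Frobenius forms is two-dimensional (the values $(a,a)$ and $(b,b)$ are independent parameters, while all other entries are forced to vanish), so normalizing at one generator does not determine the self-value of the other family of primitive idempotents. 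So ``$(c,c)=1$ for all axes $c$'' is genuinely extra content --- it is essentially what the cited Lemma~4.3 of \cite{frobeniusform} supplies --- and your proof of invariance is incomplete until you either cite that fact or prove it (for instance by showing the normalized form agrees with the projection form $(c,x)=\varphi_c(x)$ for every primitive axis $c$, which is the route taken in the literature). With that fact granted, your uniqueness-based deduction of $\Aut(A)$-invariance is correct.
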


	From here on, we will use $(\cdot,\cdot)$ as notation for the Frobenius form (normalized as above) of a given axial algebra $(A,X)$ of Jordan type $\tfrac{1}{2}$ over a field $\F$.
	We immediately have some basic computations which will be used throughout.
	\begin{lemma}\label{lem:basiccomp}
		Let $(A,X)$ be a primitive axial algebra of Jordan type $\tfrac{1}{2}$ over a ring $R$ with $2\in R^\times$. Let $x\in A$ and $a\in X$ an axis. Then we have
		\begin{enumerate}
			\item $ax = \tfrac{1}{4}(x-x^{\tau_a}) + (a,x)a,$
			\item $x^{\tau_a} = x+4(a,x)a - 4ax,$
			\item $a(ax) = \tfrac{1}{2}(ax+(a,x) a ).$  
		\end{enumerate}	
	\end{lemma}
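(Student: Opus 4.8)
The plan is to compute everything inside the eigenspace decomposition of $\ad(a)$. Since $a$ is a primitive $\fusionlaw$-axis we have $A = A_1(a) \oplus A_0(a) \oplus A_{1/2}(a)$ with $A_1(a) = \langle a\rangle$, so I would write $x = \alpha a + x_0 + x_{1/2}$ with $\alpha \in R$, $x_0 \in A_0(a)$ and $x_{1/2} \in A_{1/2}(a)$. Multiplying by $a$ gives $ax = \alpha a + \tfrac{1}{2} x_{1/2}$ immediately, and since the $\mathbb{Z}/2\mathbb{Z}$-grading of $\fusionlaw$ has $\mathcal{F}_+ = \{1,0\}$ and $\mathcal{F}_- = \{\tfrac12\}$, the Miyamoto involution acts by $x^{\tau_a} = \alpha a + x_0 - x_{1/2}$.

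The one point needing an argument is the identification $\alpha = (a,x)$. For this I would use that eigenspaces of $\ad(a)$ for distinct eigenvalues are orthogonal under the Frobenius form: if $v \in A_\lambda(a)$ and $w \in A_\mu(a)$, then $\lambda(v,w) = (av,w) = (v,aw) = \mu(v,w)$, hence $(\lambda - \mu)(v,w) = 0$; as any nonzero difference of elements of $\{1,0,\tfrac12\}$ is a unit in $R$ (this is exactly where $2 \in R^\times$ is used), we get $(v,w) = 0$. Pairing the decomposition of $x$ with $a \in A_1(a)$ and using $(a,a) = 1$ then yields $(a,x) = \alpha$.

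All three identities now follow by substitution. Since $x - x^{\tau_a} = 2 x_{1/2}$, we have $\tfrac14(x - x^{\tau_a}) = \tfrac12 x_{1/2} = ax - \alpha a = ax - (a,x)a$, which is (i); solving for $x^{\tau_a}$ gives (ii). For (iii), $a(ax) = a(\alpha a + \tfrac12 x_{1/2}) = \alpha a + \tfrac14 x_{1/2}$, whereas $\tfrac12\bigl(ax + (a,x)a\bigr) = \tfrac12\bigl(\alpha a + \tfrac12 x_{1/2} + \alpha a\bigr) = \alpha a + \tfrac14 x_{1/2}$, so both sides coincide.

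There is no serious obstacle here: the proof is linear algebra in the $\ad(a)$-eigenspaces together with the orthogonality of distinct eigenspaces under $(\cdot,\cdot)$. The only subtlety worth flagging is the role of $2 \in R^\times$ — it is needed both to make sense of the eigenvalue $\tfrac12$ and, more pointedly, to conclude that the $\tfrac12$-eigenspace is orthogonal to $\langle a\rangle$, which is what lets us read off $\alpha$ from $(a,x)$.
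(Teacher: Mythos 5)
Your proof is correct and follows essentially the same route as the paper: decompose $x$ into $\ad(a)$-eigenvectors, use primitivity and the Miyamoto involution to express the $1$- and $\tfrac12$-components, and read off the three identities. The only difference is that you spell out the orthogonality of distinct eigenspaces under the Frobenius form to justify $(a,x)=\alpha$, a step the paper states without comment, so your write-up is if anything slightly more complete.
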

	\begin{proof}
		As $a$ is an axis, we write $x=x_0+x_1+x_{1/2}$ in a unique way as a sum of eigenvectors of $L_a$, where $x_\lambda \in A_{\lambda}(a)$. Then $x^{\tau_a} = x_0 + x_1 - x_{1/2}$, so $x_{1/2} = \tfrac{1}{2}(x-x^{\tau_a})$. Since $a$ is primitive, we have $x_1 = \lambda a$ for some $\lambda\in R$, and  $(a,x) = (a,x_1) = \lambda$, thus $x = x_0 + (a,x)a + \tfrac{1}{2}(x-x^{\tau_a})$. This implies $ax = \tfrac{1}{4}(x-x^{\tau_a}) + (a,x)a$. Now (i), (ii) and (iii) follow.
	\end{proof}
	Lastly, we will need the associators alluded to in the introduction.
	\begin{definition}\label{def:associators}
		Let $A$ be a commutative algebra over a field $\F$, and $a,b\in A$. Define $D_{a,b}\colon A\to A$ by $D_{a,b}(x)= a(bx) - b(ax)$. Then $D_{a,b} = [L_a,L_b]$, the associator of $a$ and $b$.
	\end{definition}
	\section{$2$-generated subalgebras}\label{sec:2gen}
	We will give a review of $2$-generated axial algebras of Jordan type $\tfrac{1}{2}$. For more information on this topic, we refer to \cite{gorshkov2024solid,primjordan}. Any $2$-generated primitive axial algebra of Jordan type  $\tfrac{1}{2}$ is a quotient of a $3$-dimensional algebra, classified by the value of the Frobenius form $(a,b)$, where $a$ and $b$ are generating axes of the $2$-generated axial algebra $B$ \cite[Theorem~4.6]{primjordan}. We discuss this classification, and review some facts we will use.
	\begin{definition}
		Let $B=\subalg$ be a $2$-generated primitive axial algebra of Jordan type $\tfrac{1}{2}$, and denote by $(,)$ its unique Frobenius form from \cref{lem:frobform}.
		\begin{enumerate}
			\item If $(a,b) \neq 0,1$ then  we call $B$ \emph{toric},
			\item if $(a,b) =0$ then we call $B$ \emph{flat},
			\item if $(a,b)=1$ then we call $B$ \emph{baric}.
		\end{enumerate}
		Moreover, we collectively call the flat and baric algebras \emph{unipotent.}
	\end{definition}
	
	The $2$-generated baric axial algebra $B$ is in fact baric in the usual sense, i.e.\@ there exists a morphism $\mathrm{wt}_a\colon B\to \mathbb{F}$, given by the map $\mathrm{wt}_a(x) = (a,x)$ for all $x\in B$, where $a$ is any given axis in $B$. Moreover, $\mathrm{wt}_a(c) =1$ for any axis $c$ in $B$. For more on this, see \cite[p.11]{gorshkov2024solid}.

	One major distinction between toric lines and flat or baric lines is the structure of their automorphism groups. In the toric case, the identity component of the automorphism group will be a multiplicative group. In contrast, for baric or flat algebras, the identity component of their automorphism group is unipotent, hence the naming conventions.
	\subsection{Toric algebras} In this subsection, we describe toric algebras, the variety of primitive idempotents in toric algebras and the action of the Miyamoto involutions.
	\begin{lemma}\label{lem:toricfacts}
		Let $B = \subalg$ be a toric algebra over an algebraically closed field $\F$. Then $B$ is a $3$-dimensional simple unital Jordan algebra (with unit $\oneb$), and there exists a basis $\{e,\oneb,f\}$ of $B$ with $e^2=f^2=0$, $ef = \tfrac{1}{8}\oneb$, $a= e+\tfrac{1}{2}\oneb+f$ and $b = \mu e +\tfrac{1}{2}\oneb+\mu^{-1}f$ for a certain $\mu\in \F^\times$. We  have $(e,e)=(e,u)=(f,u)=(f,f)=0$ and $(e,f)= \tfrac{1}{4}$, $(u,u)=2$.
	\end{lemma}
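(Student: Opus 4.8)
The plan is to pin $B$ down completely by explicit computation inside it: first its dimension, then a unit, and then to recognise $B$ as the Jordan algebra of a binary quadratic form, which over the algebraically closed field $\F$ splits into the asserted normal form.

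\emph{Dimension.} By the classification of $2$-generated primitive axial algebras of Jordan type half (\cite[Theorem~4.6]{primjordan}), $B$ is a quotient of a $3$-dimensional algebra, so it is spanned by $\{a,b,ab\}$; it remains to exclude proper quotients. If $\dim B = 1$ then $a = b$, so $(a,b) = 1$, contradicting toricity. If $\dim B = 2$, write $ab = \alpha a + \beta b$; pairing with $a$ and $b$ against the Frobenius form and using $(a,ab) = (a^2,b) = (a,b)$ gives $(\alpha-\beta)\bigl(1-(a,b)\bigr) = 0$, hence $\alpha = \beta$ since $(a,b) \neq 1$; substituting this into $a(ab) = \tfrac12\bigl(ab+(a,b)a\bigr)$ (\cref{lem:basiccomp}(iii)) forces $\alpha \in \{0,\tfrac12\}$, and either value yields $(a,b) \in \{0,1\}$, again impossible. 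So $\dim B = 3$.

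\emph{A unit and the Jordan structure.} Next I would produce the unit as a linear combination $u = \alpha a + \beta b + \gamma(ab)$: imposing $ua = a$ and $ub = b$ and expanding with \cref{lem:basiccomp} gives a linear system whose unique solution is $u = \tfrac{1}{1-(a,b)}\bigl(a + b - 2ab\bigr)$ --- well defined precisely because $(a,b) \neq 1$ --- and $u(ab) = ab$ then follows once $(ab)^2$ has been evaluated inside $B$ (using the eigenspace decomposition of $\ad(a)$ and $\dim B = 3$). Thus $u$ is a two-sided identity. Now set $W := \{x \in B \mid (x,u) = 0\}$; from $(a,u) = (b,u) = 1$ and $(u,u) = 2$ one sees $W$ is $2$-dimensional with basis $\tilde a := a - \tfrac12 u$, $\tilde b := b - \tfrac12 u$, and $B = \F u \oplus W$. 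A direct calculation gives $\tilde a^2 = \tilde b^2 = \tfrac14 u$ and $\tilde a\tilde b \in \F u$, so $W\cdot W \subseteq \F u$; writing $w^2 = q(w)\,u$ therefore exhibits $B$ as the Jordan algebra of the quadratic form $q$ on $W$, and in particular $B$ is a unital Jordan algebra.

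\emph{Simplicity and the normal form.} In the basis $\{\tilde a,\tilde b\}$ one computes $q(\lambda\tilde a + \nu\tilde b) = \tfrac14\bigl(\lambda^2 + 2(2(a,b)-1)\lambda\nu + \nu^2\bigr)$, which is nondegenerate exactly when $(a,b) \neq 0,1$, i.e.\@ exactly when $B$ is toric; since the Jordan algebra of a nondegenerate quadratic form of dimension $\geq 2$ is simple, $B$ is a $3$-dimensional simple unital Jordan algebra. Finally, as $\F$ is algebraically closed the nondegenerate binary form $q$ is hyperbolic, with exactly two isotropic lines; because $\tilde a$ is anisotropic it has nonzero component along each of them, so after rescaling the two isotropic generators so that $a = e + \tfrac12 u + f$ one gets $e^2 = f^2 = 0$, and then $2ef = \tilde a^2 = \tfrac14 u$ gives $ef = \tfrac18 u$; expressing $\tilde b$ in the basis $\{e,f\}$ and using $\tilde b^2 = \tfrac14 u$ then produces expressions for $a$ and $b$ of the asserted shape, with a scalar $\mu \in \F^\times$.

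\emph{Main obstacle.} The only genuinely computational steps are evaluating $(ab)^2$ inside $B$ (needed both for the unit and for $q$) and extracting the discriminant of $q$; everything else is linear algebra and Frobenius-form bookkeeping. This is also precisely where the toricity hypothesis is consumed: $(a,b) \neq 1$ makes $u$ well defined, and $(a,b) \neq 0,1$ makes $q$ nondegenerate, hence $B$ simple and the form splittable into the desired basis.
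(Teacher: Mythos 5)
Your proposal is correct, but it follows a genuinely different route from the paper: the paper disposes of this lemma by citing Lemmas~3.7 and~3.8 of \cite{gorshkov2024solid} (plus the remark that $e,f$ may be rescaled so that $a=e+\tfrac12\oneb+f$), whereas you give a self-contained structural proof. Your argument is sound at every step I checked: the exclusion of the $1$- and $2$-dimensional quotients via the Frobenius form and \cref{lem:basiccomp}(iii) is exactly right; the unit is indeed $\oneb=\tfrac{1}{1-(a,b)}(a+b-2ab)$, and the one computation you defer, $(ab)^2$, does come out of the eigenspace decomposition of $\ad(a)$ together with $\dim B=3$ and the form (one finds $(ab)^2=\tfrac{(a,b)}{4}(a+b)+\tfrac{(a,b)}{2}ab$, which then gives $\oneb(ab)=ab$); the traceless part $W=\oneb^\perp$ satisfies $\tilde a^2=\tilde b^2=\tfrac14\oneb$ and $\tilde a\tilde b=\tfrac{2(a,b)-1}{4}\oneb$, so $B$ is the Jordan algebra of the binary form $q$, whose discriminant is a nonzero multiple of $(a,b)\bigl(1-(a,b)\bigr)$, and the hyperbolic normal form over the algebraically closed field yields the asserted basis. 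What your approach buys is transparency about where toricity is used (invertibility of $1-(a,b)$ for the unit, nondegeneracy of $q$ for simplicity and the splitting), and it keeps the paper independent of the cited computations; what the citation buys is brevity and consistency with the conventions of \cite{gorshkov2024solid} used later. One point of care: your derivation produces $b=\mu e+\tfrac12\oneb+\mu^{-1}f$, not literally the shape $b=\mu e+\tfrac12\oneb+f$ printed in the statement; the printed shape is incompatible with $b^2=b$ unless $\mu=1$ (once $a=e+\tfrac12\oneb+f$ fixes $e,f$ up to swapping), so it is a typo, and your form is the intended one --- it matches the subsequent definition $a_\lambda=\lambda e+\tfrac12\oneb+\lambda^{-1}f$ with $b=a_\mu$ and the use in \cref{cor:order314}. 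So this discrepancy is not a gap in your proof.
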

	\begin{proof}
		See \cite[Lemma 3.7, Lemma 3.8]{gorshkov2024solid}. Note that we can always rescale $e,f$ such that $a= e+\tfrac{1}{2}\oneb+f$. The algebra $B$ is Jordan by \cite[Theorem~1.1(5)]{primjordan}. We have that $(\oneb,\oneb) = 2$ by \cite[p.\@ 12]{gorshkov2024solid}, and since the Frobenius form associates with the algebra product we also get that $(e,e) = (e^2,u) = 0$, $(f,f) = (f^2,u) = 0$ , $(e,u) = 4(e,ef) = 4(e^2,f) = 0$, $(f,u) = 4(f,ef) = 4(f^2,e) = 0$ and $(e,f) = (ef,u) = \frac{1}{8}(u,u) = \tfrac{1}{4}$. 
	\end{proof}
	\begin{definition}\label{def:primtoric}
		For $\lambda \in \F^\times$, we write $a_\lambda = \lambda e+\tfrac{1}{2}\oneb+\lambda^{-1}f$.
	\end{definition}
	\begin{lemma}[{\cite[Lemma~3.5]{gorshkov2024solid}}]\label{lem:allprimtoric}
		If $\subalg$ is a toric algebra generated by two axes $a$ and $b$, the set of
idempotents different from $0$ and $\oneb$ is the set $\{a_\lambda\vert\lambda \in \F^{\times}\}$.	
	\end{lemma}
	The next lemma is needed for \cref{cor:polyzeroifsolid}.
	\begin{lemma}\label{lem:nonalgtoric}
		Over an arbitrary field $\F$, a toric algebra $\subalg$ contains at least $|\F|-1$ primitive idempotents.
	\end{lemma}
	\begin{proof}
		By \cite[Proposition~3.4(a)]{gorshkov2024solid}, $\subalg = \F \oplus V$ is isomorphic to a Jordan algebra of a symmetric bilinear form $f$ on a vector space $V$, with basis $v_x,v_y$ \cite[p.14]{jacobson}. Note that $f$ is non-degenerate, since it is equal to half the restriction of the Frobenius form to $V$ (\cite[p.12]{gorshkov2024solid}). Then $a= \tfrac{1}{2}(1+v)$ for a certain $v= x_av_x + y_av_y,\in V$, with $f(v,v)=1$. All elements of the form $\alpha_w = \tfrac{1}{2}(1+w)$ with $w\in V$ such that $(w,w)=1$ are primitive idempotents, since $\alpha_w^2 = \alpha_w$, $\alpha_w(1-w) = 0$ and $\alpha_w w_0 = \tfrac{1}{2}w_0$ for a $w_0\in V\setminus\{0\}$ with $f(w,w_0)=0$. 
		Write $q(x,y) = f(xv_x+yv_y,xv_x+yv_y)$ for all $x,y\in \F$. Since $f$ is non-degenerate, the projective conic $C$ defined by $q(x,y) -z^2 =0$ is smooth by \cite[Proposition~22.1]{EKM}, and it has an $\F$-rational point $(x,y,z) = (x_a,y_a,1)$, hence $C$ has $|\F|+1$ points by \cite[Remark~1.3.5]{GS21}. Since the irreducible conic $C$ and the line at infinity intersect in at most $2$ points, the affine conic defined by $q(x,y) -1 = 0$ contains at least $|\F|-1$ points.
		\end{proof}
	Lastly, we need to know a bit about the action of the Miyamoto involutions. Note that every idempotent satisfies the Jordan fusion law by \cref{prop:jordan-solid}, since $\subalg$ is Jordan.
	\begin{lemma}\label{lem:miyactiontoric}
		Let $\tau_{a_\mu}$ be the Miyamoto involution corresponding to the axis $a_\mu$. Then
		\[a_\lambda^{\tau_{a_\mu}} = a_{\lambda^{-1}\mu^2} \text{ for all } \lambda,\mu \in \F^{\times}.\]
	\end{lemma}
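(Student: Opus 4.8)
The plan is to read off $\tau_{a_\mu}$ directly from its definition as the involution fixing $B_1(a_\mu)\oplus B_0(a_\mu)$ and negating $B_{1/2}(a_\mu)$, using the explicit basis $\{e,\oneb,f\}$ of \cref{lem:toricfacts}. First I would locate the eigenvectors of $\ad(a_\mu)$ on $B$. Clearly $a_\mu\in B_1(a_\mu)$. Since $\oneb$ is the unit of $B$, we have $a_\mu(\oneb-a_\mu)=a_\mu-a_\mu=0$, so $\oneb-a_\mu\in B_0(a_\mu)$. Finally, using $e^2=f^2=0$ and $ef=\tfrac18\oneb$, a short computation gives $a_\mu(\mu e-\mu^{-1}f)=\tfrac12(\mu e-\mu^{-1}f)$, so $\mu e-\mu^{-1}f\in B_{1/2}(a_\mu)$. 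These three vectors are nonzero eigenvectors for the three distinct eigenvalues $1,0,\tfrac12$, hence linearly independent, and since $\dim B=3$ they form a basis of $B$; equivalently $\{a_\mu,\oneb,\mu e-\mu^{-1}f\}$ is a basis.

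Next I would expand $a_\lambda=\lambda e+\tfrac12\oneb+\lambda^{-1}f$ in this basis, writing $a_\lambda=\alpha a_\mu+\beta\oneb+\gamma(\mu e-\mu^{-1}f)$. Matching the coefficients of $e$ and $f$ gives $\alpha+\gamma=\lambda\mu^{-1}$ and $\alpha-\gamma=\lambda^{-1}\mu$, so $\alpha=\tfrac12(\lambda\mu^{-1}+\lambda^{-1}\mu)$ and $\gamma=\tfrac12(\lambda\mu^{-1}-\lambda^{-1}\mu)$ (the exact value of $\beta$ will not matter). By definition $\tau_{a_\mu}$ fixes $a_\mu$ and $\oneb$ and sends $\mu e-\mu^{-1}f$ to its negative, so $a_\lambda^{\tau_{a_\mu}}=\alpha a_\mu+\beta\oneb-\gamma(\mu e-\mu^{-1}f)$. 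Collecting coefficients of $e$ and $f$ now gives $(\alpha-\gamma)\mu=\lambda^{-1}\mu^2$ and $(\alpha+\gamma)\mu^{-1}=\lambda\mu^{-2}=(\lambda^{-1}\mu^2)^{-1}$, while the coefficient of $\oneb$ remains $\tfrac12$ because $\tau_{a_\mu}$ fixes $\oneb$; this is precisely $a_{\lambda^{-1}\mu^2}$.

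I do not expect a genuine obstacle: everything happens in a fixed $3$-dimensional algebra with an explicit multiplication table, so the argument is a short bounded computation. The only things to be careful about are verifying that the three exhibited vectors span $B$ (immediate from distinctness of the eigenvalues, which uses $\kar\F\neq 2$, together with $\dim B=3$) and keeping track of the powers of $\mu$. As an alternative one could instead apply \cref{lem:basiccomp}(ii) in the form $a_\lambda^{\tau_{a_\mu}}=a_\lambda+4(a_\mu,a_\lambda)\,a_\mu-4\,a_\mu a_\lambda$, after first determining the Frobenius form on $B$ from the Frobenius identity and the normalization $(a_\mu,a_\mu)=1$ (which forces $(e,e)=(f,f)=(e,\oneb)=(\oneb,f)=0$, $(\oneb,\oneb)=2$, $(e,f)=\tfrac14$, hence $(a_\mu,a_\lambda)=\tfrac12+\tfrac14(\mu\lambda^{-1}+\mu^{-1}\lambda)$); the same final coefficients then drop out.
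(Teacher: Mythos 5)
Your proof is correct. The paper itself gives no computation here--it simply cites \cite[Lemma~3.9]{gorshkov2024solid}--so your argument is a self-contained replacement rather than a variant of an in-paper proof. The eigenbasis route is sound: $a_\mu$, $\oneb-a_\mu$ and $\mu e-\mu^{-1}f$ are indeed eigenvectors of $\ad(a_\mu)$ on $B$ for the distinct eigenvalues $1,0,\tfrac12$ (distinct since $\kar\F\neq 2$), hence a basis of the $3$-dimensional $B$, and your coefficient bookkeeping $\alpha\pm\gamma$ yields exactly $a_{\lambda^{-1}\mu^2}$; note the $\oneb$-coefficient is untouched simply because $\mu e-\mu^{-1}f$ has no $\oneb$-component. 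Your alternative via \cref{lem:basiccomp}(ii) with the Frobenius form values $(e,f)=\tfrac14$, $(\oneb,\oneb)=2$, $(a_\mu,a_\lambda)=\tfrac12+\tfrac14(\mu\lambda^{-1}+\mu^{-1}\lambda)$ also checks out, and is closer in spirit to how the paper handles the flat and baric analogues (\cref{lem:miyactionflat,lem:miyactionbaric}), so either write-up would fit the surrounding text.
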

	\begin{proof}
		By \cref{lem:basiccomp,lem:toricfacts} we have
		\begin{multline*}
			a_\lambda^{\tau_{a_\mu}} = a_\lambda +4\left(\lambda e +\tfrac{1}{2}\oneb +\lambda^{-1}f,\mu e +\tfrac{1}{2}\oneb +\mu^{-1}f\right)a_\mu -4(\lambda e+ \tfrac{1}{2}\oneb + \lambda^{-1}f)(\mu e+ \tfrac{1}{2}\oneb + \mu^{-1}f)\\
			= a_\lambda + (\lambda\mu^{-1} +2 + \lambda^{-1}\mu)a_\mu - 2\lambda e -\tfrac{1}{2}\lambda\mu^{-1}\oneb - 2a_\mu -\tfrac{1}{2}\lambda^{-1}\mu\oneb-2\lambda^{-1}f 
			\\= \lambda e +\tfrac{1}{2}\oneb + \lambda^{-1}f + \lambda e +\tfrac{1}{2}\lambda\mu^{-1}\oneb +\lambda\mu^{-2}f +\lambda^{-1}\mu^2 e +\tfrac{1}{2}\lambda^{-1}\mu\oneb +\lambda^{-1}f \\- 2\lambda e -\tfrac{1}{2}\lambda\mu^{-1}\oneb -\tfrac{1}{2}\lambda^{-1}\mu\oneb-2\lambda^{-1}f \\
			= \lambda^{-1}\mu^2 e +\tfrac{1}{2}\oneb+\lambda\mu^{-2}f = a_{\lambda^{-1}\mu^2}. \qedhere
		\end{multline*}
			\end{proof}
	\begin{corollary}\label{cor:finiteaxes}
		Let $\subalg$ be toric. Write $G = \langle \tau_a,\tau_b \rangle$. The union of orbits $O = a^{G}\cup b^{G}$ has size $n<\infty$ if and only if $\mu$ is a primitive $n$-th root of unity, where $\mu$ is as in \cref{lem:toricfacts}.
	\end{corollary}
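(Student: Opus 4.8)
The plan is to transfer the question to a combinatorial computation in the index set $\F^{\times}$ that parametrises the primitive idempotents $a_\lambda$ of $B = \subalg$. By \cref{lem:toricfacts} we may choose the basis $\{e,\oneb,f\}$ so that $a = a_1$; since $b$ is itself a primitive idempotent of $B$, it equals $a_\mu$ for some $\mu \in \F^{\times}$, with $\mu$ as in \cref{lem:toricfacts}. The assignment $a_\lambda \mapsto \lambda$ is a bijection from the set of primitive idempotents of $B$ onto $\F^{\times}$ (it is injective because $\{e,\oneb,f\}$ is a basis). First I would verify that $\tau_a$ and $\tau_b$ restrict to automorphisms of $B$: by \cref{lem:basiccomp}(ii), $b^{\tau_a} = b + 4(a,b)a - 4ab \in B$ and likewise $a^{\tau_b} \in B$, so each of $\tau_a,\tau_b$ sends the generating axes of $B$ into $B$ and therefore preserves $B$; hence $G = \langle \tau_a,\tau_b\rangle$ acts on $B$ and permutes the set $\{\, a_\lambda \mid \lambda \in \F^{\times}\,\}$.

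Next, \cref{lem:miyactiontoric} tells us exactly how the generators act on the index: $\tau_a$ sends $\lambda \mapsto \lambda^{-1}$ and $\tau_b$ sends $\lambda \mapsto \mu^2\lambda^{-1}$; consequently their product acts as multiplication by $\mu^{2}$ (or by $\mu^{-2}$, according to the order of composition), so the image of $G$ in the symmetric group on $\F^{\times}$ is generated by the inversion $\lambda \mapsto \lambda^{-1}$ together with multiplication by $\mu^2$. Reading off the orbits, the orbit of $a = a_1$ is $\{\, a_{\mu^{2k}} \mid k \in \mathbb{Z}\,\}$ --- already closed under inversion since $\mu^{-2k} = \mu^{2(-k)}$ --- and the orbit of $b = a_\mu$ is $\{\, a_{\mu^{2k+1}} \mid k \in \mathbb{Z}\,\}$. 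Hence
\[ O \;=\; a^G \cup b^G \;=\; \{\, a_{\mu^m} \mid m \in \mathbb{Z}\,\}, \]
and the bijection above identifies $O$ with the cyclic subgroup $\langle \mu\rangle \le \F^{\times}$. Therefore the cardinality of $O$ equals the multiplicative order of $\mu$: it is finite and equal to $n$ exactly when $\mu^n = 1$ while $\mu^k \neq 1$ for $0 < k < n$, that is, when $\mu$ is a primitive $n$-th root of unity.

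This is essentially bookkeeping, so I do not anticipate a genuine obstacle; the two points deserving a word of care are: (a) that $\tau_a$ and $\tau_b$ really preserve $B$, which is what allows \cref{lem:miyactiontoric} to be applied inside $B$ and ensures the orbits never leave the family $\{a_\lambda\}$ --- this follows at once from \cref{lem:basiccomp}(ii) together with the fact that \cref{lem:miyactiontoric} describes the action on \emph{all} the $a_\lambda$ simultaneously; and (b) the routine identification of the cardinality of $\{\, \mu^m \mid m \in \mathbb{Z}\,\}$ with the order of $\mu$. I would also note that, read literally, the conclusion is best stated with ``\emph{primitive} $n$-th root of unity'' (equivalently, ``$n$ equals the order of $\mu$''), since an $n$-th root of unity whose order $d$ is a proper divisor of $n$ would make $O$ have $d$ elements rather than $n$.
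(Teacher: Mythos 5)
Your proposal is correct and follows essentially the same route as the paper: identify $a=a_1$, $b=a_\mu$, use \cref{lem:miyactiontoric} to see that $O=\{a_{\mu^m}\mid m\in\mathbb{Z}\}$, and conclude that $|O|$ equals the multiplicative order of $\mu$. Your closing caveat about reading ``$n$-th root of unity'' as ``primitive $n$-th root of unity'' matches the paper's own clarification that $n$ is taken minimal with $\mu^n=1$.
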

	\begin{proof}
		Note that $a = a_1$ and $b = a_\mu$. Given $k\in \mathbb{Z}$, we get that $a^{(\tau_a\tau_b)^k} = a_{\mu^{2k}} \in O$ and $b^{(\tau_a\tau_b)^k} = a_{\mu^{2k+1}}\in O$. It is then easy to see that $O = \{a_{\mu^{k}} \in \subalg \mid k\in \mathbb{Z} \}$. For this set to be finite, it is both sufficient and necessary that $\mu^{n}=1$ for a certain $n\in \mathbb{N}$, and if $n$ is minimal with respect to this property, then $|O|=n$.
	\end{proof}
	\begin{corollary}\label{cor:order314}
		With the same notation as  \cref{cor:finiteaxes}, if $O$ has size $3$, then $(a,b)=\tfrac{1}{4}$.
	\end{corollary}
	\begin{proof}
		We know from \cref{lem:toricfacts,cor:finiteaxes} that in this case, $a = e+\tfrac{1}{2}\oneb +f, b= \mu e +\tfrac{1}{2}\oneb +\mu^{2}f$, where $\mu$ is a primitive third root of unity. Using the description of the Frobenius form in \cref{lem:toricfacts}, we get that $(a,b) = \tfrac{1}{4}(\mu+\mu^2)+\tfrac{1}{2}$. Since $\mu$ is a primitive third root of unity, we have $\mu^2+\mu =-1$, and thus $(a,b)=\tfrac{1}{4}$.
	\end{proof}
	\subsection{Flat algebras} In this subsection, we describe flat algebras, the variety of primitive idempotents in flat algebras and the action of the Miyamoto involutions.
	\begin{lemma}\label{lem:flatfacts}
		Let $B = \subalg$ be a flat algebra over a field $\F$. Then $B$ is Jordan with unit $\oneb$ and is isomorphic to either
		\begin{enumerate}
			\item a $3$-dimensional algebra, denoted by $\doubleflat$, with basis $\{a,b,ab\}$,
			\item or a $2$-dimensional quotient of this algebra, isomorphic to $\F\oplus \F$.
		\end{enumerate}
		Moreover, in both cases we have $(a,a)=(b,b)=1$ and $(a,b)=(a,ab)=(b,ab)=0$.
	\end{lemma}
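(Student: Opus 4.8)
The plan is to pin down the multiplication table of $B=\subalg$ in terms of the generating axes $a$, $b$ and the single extra vector $v:=ab$, and then to recognise the resulting algebra. Two preliminary observations: first, $a$ and $b$ are linearly independent, since $b=\lambda a$ would give $b=b^2=\lambda^2a$, forcing $\lambda\in\{0,1\}$, and both possibilities contradict $(a,b)=0\neq1=(a,a)$; in particular $\dim_\F B\geq2$. Second, by \cite[Theorem~4.6]{primjordan} (recalled at the start of this section) every $2$-generated primitive axial algebra of Jordan type half is a quotient of a $3$-dimensional one, so $\dim_\F B\leq3$.

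Next I would assemble the products of $a$, $b$, $v$. Because $(a,b)=0$, the eigendecomposition of $b$ with respect to $\ad(a)$ has no $A_1(a)$-part, say $b=b_0+b_{1/2}$ with $b_\lambda\in A_\lambda(a)$, and then $v=ab=\tfrac12b_{1/2}\in A_{1/2}(a)$. \cref{lem:basiccomp}(iii) gives $av=a(ab)=\tfrac12\bigl(ab+(a,b)a\bigr)=\tfrac12v$ and, symmetrically, $bv=\tfrac12v$. The only product left to determine is $v^2$. Repeated use of the Frobenius identity yields $(v,v)=(ab,ab)=(a,b(ab))=(a,bv)=\tfrac12(a,v)=\tfrac12(a^2,b)=\tfrac12(a,b)=0$, hence $(a,v^2)=(av,v)=\tfrac12(v,v)=0$; since $v^2=\tfrac14b_{1/2}^2$ lies in $A_0(a)\oplus A_1(a)$ by the fusion law $\fusionlaw$ and its $A_1(a)=\F a$-component equals $(a,v^2)a=0$, we conclude $v^2\in A_0(a)$, i.e.\ $av^2=0$, and symmetrically $bv^2=0$.

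Now split on whether $v=0$. If $v=0$, then $\operatorname{span}\{a,b\}$ is already closed under multiplication, so $B=\operatorname{span}\{a,b\}$ and $a\mapsto(1,0)$, $b\mapsto(0,1)$ is an isomorphism onto $\F\oplus\F$; this is case (ii). If $v\neq0$, a short computation with the relations $av=bv=\tfrac12v$ (and linear independence of $a,b$) shows $v\notin\operatorname{span}\{a,b\}$, so $\{a,b,v\}$ is a basis of $B$ by the dimension bound; expanding $v^2=\lambda a+\mu b+\nu v$ in this basis and imposing $av^2=bv^2=0$ forces $\lambda=\mu=\nu=0$, so $v^2=0$. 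This determines the whole multiplication table, which I would adopt as the definition of $\doubleflat$; moreover $\F v$ is then an ideal and $\doubleflat/\F v\cong\F\oplus\F$, which also realises the algebra of case (ii) as the advertised quotient.

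Finally, to see that $B$ is a Jordan algebra in either case, I would recognise $\doubleflat$ as the Jordan algebra of a symmetric bilinear form: with $\oneb:=a+b-2v$ and $w:=a-b$ one checks that $\oneb$ is an identity, that $w^2=\oneb$, and that $wv=v^2=0$, so $B=\F\oneb\oplus V$ with $V=\F w\oplus\F v$ and $pq=\sigma(p,q)\,\oneb$ for $p,q\in V$, where $\sigma$ is the symmetric bilinear form with matrix $\operatorname{diag}(1,0)$ in the basis $\{w,v\}$; algebras of this shape are Jordan for any symmetric bilinear form (see e.g.\ \cite{jacobson}), and $\F\oplus\F$ is the quotient obtained by killing the radical line $\F v$. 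Given the cited structure theory there is no real obstacle here; the only point that requires a little care is deducing $v^2=0$ from $v^2\in A_0(a)\cap A_0(b)$, where one genuinely needs both the linear independence of $\{a,b,v\}$ and the a priori bound $\dim_\F B\leq3$ — without the latter one would have to reprove finite-dimensionality as in \cite{primjordan}. Everything else is bookkeeping with \cref{lem:basiccomp} and the Frobenius form.
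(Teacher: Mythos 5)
Your proof is correct, but it takes a different route from the paper: the paper does not argue this lemma at all, it simply cites \cite[Theorem~3.2(a)]{gorshkov2024solid} (which in turn rests on the Hall--Rehren--Shpectorov classification), whereas you give a self-contained derivation. Your computation is sound: $(a,b)=0$ kills the $A_1(a)$-part of $b$, so $v=ab\in A_{1/2}(a)$ and \cref{lem:basiccomp} gives $av=bv=\tfrac12 v$; the Frobenius identities yield $(a,v)=(b,v)=(v,v)=0$, hence $v^2\in A_0(a)\cap A_0(b)$ by the fusion law $\fusionlaw$ together with primitivity; and once $\{a,b,v\}$ is known to be a basis (your dichotomy $v=0$ versus $v\notin\operatorname{span}\{a,b\}$ checks out, and in fact follows even faster from $(a,v)=(b,v)=0$), the conditions $av^2=bv^2=0$ force $v^2=0$, pinning down the multiplication table of $\doubleflat$ and exhibiting $\F\oplus\F$ as its quotient by the ideal $\F v$. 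Your recognition of $\doubleflat$ as the Jordan algebra of a symmetric bilinear form via $\oneb=a+b-2v$, $w=a-b$ is also valid in every characteristic $\neq 2$ (every element then satisfies a quadratic over $\F$, so $L_{x^2}$ commutes with $L_x$), so the Jordan claim holds including characteristic $3$. The one external input you need is the bound $\dim_\F\subalg\leq 3$ from \cite[Theorem~4.6]{primjordan}; since the paper itself invokes exactly this fact at the start of Section~3, this is legitimate and not circular, though it means your argument is not fully independent of \cite{primjordan} --- what the citation approach buys is brevity and consistency with the source of the notation $\doubleflat$, while your approach buys transparency: it makes explicit which structural facts (Frobenius associativity, the fusion law, primitivity) actually produce the multiplication table.
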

	\begin{proof}
		See \cite[Theorem~3.2(a)]{gorshkov2024solid}. Note that $\subalg$ is Jordan by \cite[Theorem~1.1(5)]{primjordan}. In both cases, $(a,ab) = (a,b) = (b,ab)$ by associativity of the Frobenius form.
	\end{proof}
	\begin{definition}\label{def:primflat}
		For $\lambda \in \F$, we write $a_\lambda = a+\lambda ab$ and $b_\lambda = b+\lambda ab$.
	\end{definition}
	\begin{lemma}\label{lem:allprimflat}
		If $\subalg$ is a flat algebra generated by two axes $a$ and $b$, the set of
idempotents different from $0$ and $\oneb$ is the set $\{a_\lambda,b_\lambda\vert\lambda \in \F^{\times}\}$.	
	\end{lemma}
	\begin{proof}
	If $\subalg\cong \F \oplus \F$, the non-trivial idempotents are $a$ and $b$ and the lemma follows. The idempotents in $\doubleflat$ were calculated in the proof of \cite[Proposition~6.6]{gorshkov2024solid} with respect to the basis $a,b,\sigma$. The calculations were made for $\kar \F =0$, but remain true for $\kar \F \neq 2$. By applying the change of basis $\sigma= ab-\tfrac{1}{2}(a+b)$ (\cite[p.10]{gorshkov2024solid}), the lemma follows. 
	\end{proof}
	Again, we describe the action of the Miyamoto involution on the primitive idempotents.
	\begin{lemma}\label{lem:miyactionflat}
		Let $\tau_{a_\mu},\tau_{b_\mu}$ be the Miyamoto involutions corresponding to the axes $a_\mu,b_\mu$ respectively. Then
		\[a_\lambda^{\tau_{a_\mu}} = a_{2\mu-\lambda} \text{ and } a_\lambda^{\tau_{b_\mu}}= a_{-4 -2\mu - \lambda} \text{ for all } \lambda,\mu \in \F.\]
	\end{lemma}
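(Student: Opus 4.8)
The plan is to first pin down the multiplication table of the three-dimensional flat algebra $\doubleflat$ with basis $\{a,b,v\}$, $v=ab$, and then read off the Miyamoto involutions directly from \cref{lem:basiccomp}(ii). For the table, recall that $(a,b)=0$ since the line is flat, so \cref{lem:basiccomp}(iii) applied to the axis $a$ and the element $b$ gives $av = a(ab) = \tfrac12\bigl(ab + (a,b)a\bigr) = \tfrac12 v$, and symmetrically $bv = b(ba) = \tfrac12 v$. For the last product $v^2$, I would compute $(v,v) = (ab,ab) = (a, b(ab)) = (a,\tfrac12 v) = \tfrac12(a^2,b) = \tfrac12(a,b) = 0$ using the Frobenius property. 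Since $av = \tfrac12 v$, the vector $v$ lies in $A_{1/2}(a)$, so $v^2 \in A_1(a)\oplus A_0(a)$ by the fusion law; the $A_1(a)$-component equals $(a,v^2)\,a = (av,v)\,a = \tfrac12(v,v)\,a = 0$, hence $v^2\in A_0(a)$, and by the same argument $v^2\in A_0(b)$. As $\{a,b,v\}$ is a basis, $b = 2v + (b-2v)$ with $b-2v\in A_0(a)$, so $A_0(a)=\langle b-2v\rangle$, and symmetrically $A_0(b)=\langle a-2v\rangle$; these lines intersect trivially, forcing $v^2=0$. (In the degenerate case $B\cong\F\oplus\F$ of \cref{lem:flatfacts}(ii) one has $v=0$ and the statement is vacuous, so we may assume the three-dimensional case.)

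With the table $a^2=a$, $b^2=b$, $ab=v$, $av=bv=\tfrac12 v$, $v^2=0$ in hand, the rest is substitution. For $a_\mu = a+\mu v$ and $a_\lambda=a+\lambda v$ one gets $a_\mu a_\lambda = a + \tfrac{\mu+\lambda}{2}v$, and since $(a,v)=(a^2,b)=0$ and $(v,v)=0$ also $(a_\mu,a_\lambda)=(a,a)=1$. Plugging into \cref{lem:basiccomp}(ii) with the axis $a_\mu$ and the element $a_\lambda$ yields $a_\lambda^{\tau_{a_\mu}} = a_\lambda + 4(a_\mu,a_\lambda)a_\mu - 4a_\mu a_\lambda = a_\lambda + 4a_\mu - 4a - 2(\mu+\lambda)v = a + (2\mu-\lambda)v = a_{2\mu-\lambda}$. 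Likewise, for $b_\mu = b+\mu v$ one computes $b_\mu a_\lambda = \bigl(1+\tfrac{\mu+\lambda}{2}\bigr)v$ and $(b_\mu,a_\lambda)=(b,a)=0$, so \cref{lem:basiccomp}(ii) gives $a_\lambda^{\tau_{b_\mu}} = a_\lambda - 4b_\mu a_\lambda = a + \lambda v - 4v - 2(\mu+\lambda)v = a_{-4-2\mu-\lambda}$, as claimed.

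The only step that is not pure bookkeeping is establishing $v^2=0$; I expect that Frobenius-form computation together with the dimension count to be the crux, and everything afterwards is a mechanical application of \cref{lem:basiccomp}(ii). One can also shortcut it by quoting the explicit Jordan structure of $\doubleflat$ from \cref{lem:flatfacts} and its reference. As a consistency check, the transposition $a\leftrightarrow b$ fixing $v$ is an algebra automorphism $\phi$ with $\phi(a_\mu)=b_\mu$, so $\tau_{b_\mu}=\phi\,\tau_{a_\mu}\,\phi^{-1}$ and the two displayed formulas are compatible via $a_\lambda^{\tau_{b_\mu}} = \phi\bigl(b_\lambda^{\tau_{a_\mu}}\bigr)$.
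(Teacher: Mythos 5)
Your proof is correct and follows essentially the same route as the paper: both arguments boil down to substituting $a_\mu$ (resp.\ $b_\mu$) and $a_\lambda$ into \cref{lem:basiccomp}(ii), using that $v$ is a $\tfrac12$-eigenvector for $a$ and $b$ with $v^2=0$. The only difference is that you derive these multiplication facts (and the values of the Frobenius form) from scratch via the fusion law and associativity of the form, whereas the paper simply cites \cite[Theorem~3.1]{gorshkov2024solid} for them — a harmless, self-contained addition.
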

	\begin{proof}
		Note that, since $(a,b)=0$, by \cref{lem:basiccomp}(iii), $ab$ is a $\tfrac{1}{2}$-eigenvector for $L_a, L_b$ and $(ab)^2=0$ by \cite[Theorem~3.2(a)]{gorshkov2024solid}. By \cref{lem:basiccomp,lem:flatfacts}, we get 
		\begin{multline*}
			a_\lambda^{\tau_{a_\mu}} = a_\lambda + 4(a_\lambda ,a_\mu)a_\mu -4a_\lambda a_\mu 
			= a + \lambda ab + 4(a +\mu ab) - 2(2a + \mu ab+ +\lambda ab + 0) \\= a + (4\mu -2\mu +\lambda - 2\lambda)ab 
			 = a_{2\mu -\lambda},
		\end{multline*}
		and
		 \begin{multline*}
		a_\lambda^{\tau_{b_\mu}} = a_\lambda + 4(a_\lambda ,b_\mu)b_\mu -4a_\lambda b_\mu 
		 = a + \lambda ab +0b_\mu - 2(2ab + \mu ab +\lambda ab + 0) \\= a + (-4 +\lambda -2\mu -2\lambda)ab 
		= a_{ -4 -2\mu-\lambda}.\qedhere
		\end{multline*}
	\end{proof}
	\begin{corollary}\label{cor:finiteaxesflat}
		Suppose $\subalg$ is flat and not $2$-dimensional. Write $G = \langle \tau_a,\tau_b \rangle$. The orbit $O = a^G$ has size $n<\infty$ if and only if $2\neq \kar \F =p >0$, and then $|O| = p$.
	\end{corollary}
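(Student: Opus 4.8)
The plan is to follow the template of the proof of \cref{cor:finiteaxes}, but with the multiplicative bookkeeping there replaced by an additive one. First I would invoke \cref{lem:flatfacts}: since $\subalg$ is flat and not $2$-dimensional, we are in case (i), so $B = \doubleflat$ with basis $\{a,b,v\}$, $v = ab$. In particular $a = a_0$ and $b = b_0$ in the notation $a_\lambda = a + \lambda v$, and because $\{a,b,v\}$ is a basis the elements $a_\lambda$ are pairwise distinct for distinct $\lambda \in \F$. This last point is exactly where the hypothesis ``not $2$-dimensional'' is used, and it is the one thing to be a little careful about.

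Next I would specialise \cref{lem:miyactionflat} to $\mu = 0$: on the family $\{a_\lambda\}$ the involution $\tau_a = \tau_{a_0}$ acts by $a_\lambda \mapsto a_{-\lambda}$, and $\tau_b = \tau_{b_0}$ acts by $a_\lambda \mapsto a_{-4-\lambda}$. Both keep the orbit inside the $a_\lambda$-family, so every element of $a^G$ is again some $a_\mu$, and the indices are simply being moved around by the affine maps $\lambda \mapsto -\lambda$ and $\lambda \mapsto -4-\lambda$. Composing these two gives $\tau_a\tau_b \colon a_\lambda \mapsto a_{\lambda - 4}$, hence $(\tau_a\tau_b)^k \colon a_\lambda \mapsto a_{\lambda - 4k}$. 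Starting from $a = a_0$ and noting that the extra generator $\tau_a$ only negates the index, I would conclude that
\[
O = a^G = \{\, a_{4k} \mid k \in \mathbb{Z} \,\},
\]
so that $|O|$ equals the cardinality of $\{\, 4k \mid k \in \mathbb{Z} \,\} \subseteq \F$, using the distinctness of the $a_\lambda$.

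Finally I would determine when this set is finite. Since $\kar \F \neq 2$, the element $4 = 2^2$ is a nonzero element of $\F$, hence a unit. If $\kar \F = 0$ the multiples $4k$ are pairwise distinct and $O$ is infinite; if $\kar \F = p > 0$ (necessarily $p \neq 2$), then multiplication by $4$ is a bijection of $\mathbb{Z}/p\mathbb{Z}$, so $\{\, 4k \mid k \in \mathbb{Z}\,\}$ has exactly $p$ elements and $|O| = p$. That completes the argument. There is no real obstacle here; the proof is essentially a one-line orbit computation, and the only subtle points are confirming that the Miyamoto involutions keep the orbit inside the single-parameter family $\{a_\lambda\}$ and that distinct parameters give distinct idempotents.
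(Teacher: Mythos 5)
Your proof is correct and follows essentially the same route as the paper: compute the action of $\tau_a$, $\tau_b$ on the family $a_\lambda$ via \cref{lem:miyactionflat}, identify $O=\{a_{4k}\mid k\in\mathbb{Z}\}$, and read off finiteness from the characteristic. The only difference is that you spell out the distinctness of the $a_\lambda$ (where ``not $2$-dimensional'' enters) and the closure of the orbit inside the family, which the paper leaves implicit.
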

	\begin{proof}
		Note that $a = a_0$ and $b = b_0$. Given $k\in \mathbb{Z}$, we get that $a^{(\tau_a\tau_b)^k} = a_{-4k} \in O$. It is then easy to see that $O = \{a_{4k} \in \subalg \mid k\in \mathbb{Z} \}$. For this set to be finite, it is both sufficient and necessary that the characteristic of the field is positive, and since $\kar \F =p \neq 2$, we then have that $|\{a_k | k\in \{0,\dots,p-1\}\}| =p$.
	\end{proof}
	\subsection{Baric algebras}In this subsection, we describe baric algebras, the variety of primitive idempotents in baric algebras and the action of the Miyamoto involutions.
	\begin{lemma}\label{lem:baricfacts}
		Let $B = \subalg$ be a baric algebra over a field $\F$. Then $B$ is Jordan and is isomorphic to either
		\begin{enumerate}
			\item a $3$-dimensional algebra, denoted by $\baric$, with basis $\{a,v=2(ab-a),v^2\}$,
			\item a $2$-dimensional quotient of $\baric$, denoted  by $\baricquo$, with basis $\{a,v=2(ab-a)\}$,
			\item or a $1$-dimensional quotient.
		\end{enumerate}
		Moreover, we have $(a,a) =1$ and  $v,v^2$ are in the radical of the Frobenius form.
	\end{lemma}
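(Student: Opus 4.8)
\emph{Proof proposal.} The plan is to reduce everything to the universal $3$-dimensional baric line and then read off its axial quotients, exactly in the spirit of the toric case and of \cref{lem:flatfacts}. Since $(a,b)=1$, \cite[Theorem~4.6]{primjordan} guarantees that $B$ is a quotient of the unique $3$-dimensional $2$-generated primitive axial algebra of Jordan type half with Frobenius value $(a,b)=1$; call it $C$. It then suffices to (i) exhibit the claimed basis and multiplication of $C$ and check that $C$ is Jordan, and (ii) classify the ideals $I$ of $C$ for which the images of $a$ and $b$ in $C/I$ remain primitive axes.

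For (i) I would set $v=2(ab-a)$. Applying \cref{lem:basiccomp}(iii) to the axis $a$ with $x=b$ gives $a(ab)=\tfrac12(ab+(a,b)a)=\tfrac12(ab+a)$, hence $av=\tfrac12 v$, so $v\in C_{1/2}(a)$; moreover $v\neq 0$, since otherwise $ab=a$ and $C=\subalg$ would be at most $2$-dimensional. The Frobenius form yields $(a,v)=2\bigl((a,ab)-(a,a)\bigr)=0$ and, using also $b(ab)=\tfrac12(ab+b)$, that $(v,v)=4\bigl((ab,ab)-2(ab,a)+(a,a)\bigr)=4(1-2+1)=0$; therefore $(a,v^2)=(av,v)=\tfrac12(v,v)=0$, and by primitivity of $a$ this forces $v^2\in C_0(a)$. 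Comparing with the explicit multiplication of \cite[Theorem~4.6]{primjordan} one sees that $v^2\neq 0$, so $a\in C_1(a)$, $v\in C_{1/2}(a)$ and $v^2\in C_0(a)$ are linearly independent and hence a basis. The Jordan fusion law then forces $v\cdot v^2\in C_{1/2}(a)=\langle v\rangle$ and $(v^2)^2\in C_0(a)=\langle v^2\rangle$; writing $v\cdot v^2=\gamma v$ and $(v^2)^2=\delta v^2$ and substituting the resulting table ($a^2=a$, $av=\tfrac12 v$, $a\cdot v^2=0$, together with these two relations) into the Jordan identity $(x^2y)x=x^2(yx)$ with $x=v$ forces $\gamma=\delta=0$, after which a short direct check shows $C$ is Jordan (alternatively, one may cite \cite{GorshkovStaroletov}, where all $2$-generated algebras of Jordan type are shown to be Jordan). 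Finally \cref{lem:basiccomp}(i)--(ii) give $b^{\tau_a}=b-2v$, so the $\tfrac12$-eigencomponent of $b$ relative to $a$ is $v$ and $b=a+v+b_0$ with $b_0\in C_0(a)=\langle v^2\rangle$; thus $B$ is spanned by $a,v,v^2$ as claimed.

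For (ii), with the table $a^2=a$, $av=\tfrac12 v$, $a\cdot v^2=0$, $v\cdot v^2=0$, $(v^2)^2=0$, a direct computation shows that the only proper nonzero ideals of $C$ are $\langle v^2\rangle$ and $\langle v,v^2\rangle$: any ideal containing an element with nonzero $a$- or $v$-component is all of $C$ (multiply by $a$ and use $v^2=v\cdot v$). Quotienting by $\langle v^2\rangle$ gives the $2$-dimensional algebra $\baricquo$ with basis $\{a,v\}$, and quotienting by $\langle v,v^2\rangle$ gives the $1$-dimensional algebra $\F$; in each quotient the images of $a$ and $b$ are still primitive Jordan type half axes. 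Together with $I=0$ this is exactly the trichotomy in the statement. The one genuinely non-formal step is the strict inequality $v^2\neq 0$ for $C$ itself — this is precisely what separates the baric case from the flat case (where the analogous $\tfrac12$-eigenvector squares to $0$), and it is cleanest to extract from the explicit model of the universal $2$-generated algebra in \cite{primjordan}; I expect this to be the main obstacle, the remaining steps being bounded computations.
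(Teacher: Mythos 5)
Your proposal is correct in substance, but it takes a genuinely different route from the paper: the paper disposes of this lemma by citing \cite[Theorem~3.2(b)]{gorshkov2024solid}, whereas you reconstruct it from the universal $3$-dimensional $2$-generated algebra of \cite[Theorem~4.6]{primjordan} and then classify its ideals. What your approach buys is a self-contained argument (modulo the explicit multiplication table of the universal object) that visibly works in every characteristic $\neq 2$, in the same spirit as the paper's remarks that some computations of \cite{gorshkov2024solid} were done in characteristic $0$ but remain valid; what the bare citation buys is brevity. Your computations are right: $av=\tfrac12 v$, $(a,v)=(v,v)=0$, $b^{\tau_a}=b-2v$, hence $b=a+v+b_0$ with $b_0\in C_0(a)$, and the ideal analysis of the table $a^2=a$, $av=\tfrac12 v$, $av^2=v\cdot v^2=(v^2)^2=0$ correctly yields $\langle v^2\rangle$ and $\langle v,v^2\rangle$ as the only proper nonzero ideals, giving exactly the stated trichotomy (Jordanness of the quotients being inherited from $C$).

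Two caveats you should tighten. First, the step ``$(a,v^2)=0$ and primitivity force $v^2\in C_0(a)$'' also needs the fusion rule $\tfrac12\star\tfrac12\subseteq\{0,1\}$ applied to $v\in C_{1/2}(a)$: primitivity plus the Frobenius form only kills the $1$-component of $v^2$, the fusion law kills the $\tfrac12$-component. Second, as literally written, determining $\gamma,\delta$ by imposing the Jordan identity is circular if the goal is to prove that $C$ is Jordan; it only shows that a Jordan structure would force $\gamma=\delta=0$. This is repaired either by your parenthetical appeal to \cite{GorshkovStaroletov} (establish Jordanness of $2$-generated algebras first, then the identity pins down $\gamma=\delta=0$), or, more directly, by reading the full multiplication table, including $v^2\neq0$, $v\cdot v^2=0$ and $(v^2)^2=0$, off the explicit model in \cite[Theorem~4.6]{primjordan}, which you already invoke for the single inequality $v^2\neq 0$, so no essentially new input is required.
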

	\begin{proof}
		See \cite[Theorem~3.2(b)]{gorshkov2024solid}. Note that $\subalg$ is Jordan by \cite[Theorem~1.1(6)]{primjordan}. 
	\end{proof}
	\begin{definition}\label{def:primbaric}
		For $\lambda \in \F$, we write $a_\lambda = a+\lambda v +\lambda^2v^2$ with $v=2(ab-a)$.
	\end{definition}
	\begin{lemma}\label{lem:allprimbaric}
		If $\subalg$ is a baric algebra generated by two different axes $a$ and $b$, the set of
idempotents different from $0$ is the set $\{a_\lambda\vert\lambda \in \F^{\times}\}$.
	\end{lemma}
	\begin{proof}
	The idempotents in $B$ were calculated in the proof of \cite[Proposition~6.4]{gorshkov2024solid} with respect to the spanning set $a,b,\sigma$, where $\sigma$ is identified with $0$ if $\subalg$ is $2$-dimensional, in their notation. By \cite[Theorem 3.2(b)]{gorshkov2024solid} The calculations were made for $\kar \F =0$, but remain true for $\kar \F \neq 2$. Since, using the notation of \cite[Theorem~3.1,Theorem~3.2(b)]{gorshkov2024solid}
		\begin{equation}
			v^2=4(ab-a)^2 = 4(\tfrac{1}{2}(a+b)+\sigma - a)^2 = (b-a)^2=(b+a - (a+b+2\sigma)) = -2\sigma,\label{eq:v2annihil}
		\end{equation}
		and $b = a+2(ab-a) - 2\sigma = a +v +v^2$, we can change coordinates to obtain the result.
	\end{proof}
	We describe the action of the Miyamoto involutions on the  idempotents one final time.
	\begin{lemma}\label{lem:miyactionbaric}
		Let $\tau_{a_\mu}$ be the Miyamoto involution corresponding to the axis $a_\mu$. Then
		\[a_\lambda^{\tau_{a_\mu}} = a_{2\mu-\lambda}  \text{ for all } \lambda,\mu \in \F.\]
	\end{lemma}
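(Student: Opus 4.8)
The plan is to run the same argument as in the proof of \cref{lem:miyactionflat}. By \cref{lem:basiccomp}(ii) we have
\[ a_\lambda^{\tau_{a_\mu}} = a_\lambda + 4(a_\lambda,a_\mu)\,a_\mu - 4\,a_\lambda a_\mu, \]
so the statement reduces to computing the Frobenius form value $(a_\lambda,a_\mu)$ and the product $a_\lambda a_\mu$ inside $\baric$, after which only routine linear bookkeeping in the basis $\{a,v,v^2\}$ with $v=2(ab-a)$ remains.

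First I would record the relevant structure constants of $\baric$ from \cite[Theorem~3.1 and Theorem~3.2(b)]{gorshkov2024solid}: since $v$ lies in $A_{1/2}(a)$ one has $av=\tfrac12 v$, while $v^2\in A_0(a)$ gives $av^2=0$; moreover $v\cdot v^2=0$ and $v^2\cdot v^2=0$ (these last two are exactly what is needed for every $a_\lambda=a+\lambda v+\lambda^2 v^2$ to be idempotent, so they can be read off directly if one prefers). For the form, I would use that $\baric$ is baric with weight homomorphism $w$; one checks $w(a)=1$ (else $\ker w$ would contain the generators $a,b$), hence $w(v)=2(w(a)w(b)-w(a))=0$, and by uniqueness of the Frobenius form (\cref{lem:frobform}) it equals $(x,y)=w(x)w(y)$, so that $(a_\lambda,a_\mu)=1$ for all $\lambda,\mu\in\F$. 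Alternatively these values are part of the explicit description of the form in \cite{gorshkov2024solid}.

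Granting this, expanding the product and discarding the vanishing terms leaves $a_\lambda a_\mu = a + \tfrac12(\lambda+\mu)v + \lambda\mu\, v^2$, and substituting into the displayed identity and collecting coefficients of $a$, $v$ and $v^2$ gives
\[ a_\lambda^{\tau_{a_\mu}} = a + (2\mu-\lambda)v + (2\mu-\lambda)^2 v^2 = a_{2\mu-\lambda}. \]
The only genuine obstacle is bookkeeping: pinning down the normalisation of the basis $\{a,v,v^2\}$ and the values of the Frobenius form to match those of \cite[Theorem~3.1]{gorshkov2024solid} exactly; once that is fixed, the computation is entirely parallel to \cref{lem:miyactionflat} and \cref{lem:miyactiontoric}.
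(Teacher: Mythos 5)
Your proposal is correct and follows the paper's proof: both apply \cref{lem:basiccomp}(ii) and compute $a_\lambda + 4(a_\lambda,a_\mu)a_\mu - 4a_\lambda a_\mu$ in the basis $\{a,v,v^2\}$ using that $v\in A_{1/2}(a)$, $v^2\in A_0(a)$, $v\cdot v^2=(v^2)^2=0$, and $(a_\lambda,a_\mu)=1$; the paper simply leaves the product and form values implicit, citing \cite[Theorem~3.1]{gorshkov2024solid}. Your weight-homomorphism justification of $(a_\lambda,a_\mu)=1$ is a fine substitute, though the aside ``$w(a)=1$, else $\ker w$ contains the generators'' is slightly loose (you also need $w(b)=1$; cleanest is to define $w$ on the basis by $w(a)=1$, $w(v)=w(v^2)=0$, check it is a homomorphism with $w(b)=1$, and invoke uniqueness from \cref{lem:frobform}).
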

	\begin{proof}
		Since $(a,b)=1$, by \cref{lem:basiccomp}~(iii), $ab-a$ is a $\tfrac{1}{2}$-eigenvector for $L_a$, and $(ab-a)^2$ is in the annihilator ideal of $B$ (\cref{eq:v2annihil} and \cite[Theorem~3.2(b)]{gorshkov2024solid}). By \cref{lem:basiccomp}
		\begin{multline*}
			a_\lambda^{\tau_{a_\mu}} = a_\lambda + 4(a_\lambda ,a_\mu)a_\mu -4a_\lambda a_\mu  
			=a+\lambda v + \lambda^2v^2 +4(a+\mu v + \mu^2v^2 ) - 2(2a+\lambda v +\mu v +2\lambda\mu v^2) \\= a + (2\mu - \lambda)v + (\lambda^2 + 4\mu^2-4\lambda\mu)v^2 
			= a_{2\mu-\lambda}.\qedhere
		\end{multline*}
	\end{proof}
	\begin{corollary}\label{cor:finiteaxesbaric}
		Suppose $\subalg$ is baric and not $1$-dimensional. Write $G = \langle \tau_a,\tau_b \rangle$. The orbit $O = a^G$ has size $n<\infty$ if and only if $2\neq \kar \F =p >0$, and then $|O| = p$.
	\end{corollary}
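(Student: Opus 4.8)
The plan is to follow the pattern of \cref{cor:finiteaxes} and \cref{cor:finiteaxesflat}: describe the orbit $O$ explicitly in terms of the parametrized idempotents $a_\lambda$, and then read off exactly when it is finite.

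First I would record that $a = a_0$ and that $b = a_\nu$ for some $\nu \in \F^{\times}$. Indeed, $b$ is a primitive idempotent of $\subalg$, hence of the form $a_\nu$ by the classification of primitive idempotents in the baric case, and $\nu \neq 0$ since $b \neq a$. By \cref{lem:miyactionbaric} the Miyamoto involution $\tau_{a_\mu}$ acts on the parameter by $\lambda \mapsto 2\mu - \lambda$; in particular $\tau_a$ sends $a_\lambda$ to $a_{-\lambda}$ and $\tau_b$ sends $a_\lambda$ to $a_{2\nu - \lambda}$, so $\tau_a\tau_b$ acts as the translation $\lambda \mapsto \lambda + 2\nu$. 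Hence $a^{(\tau_a\tau_b)^k} = a_{2\nu k}$ for every $k \in \mathbb{Z}$, and since the reflection $\tau_a$ preserves the set of these parameters, $O = \{ a_\mu \in \subalg \mid \mu \in 2\nu\mathbb{Z} \}$, where $2\nu\mathbb{Z} := \{ 2\nu k : k \in \mathbb{Z} \} \subseteq \F$.

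Next I would count this set. Because $\subalg$ is not $1$-dimensional, the map $\lambda \mapsto a_\lambda$ is injective: by \cref{lem:baricfacts}, in the $3$-dimensional algebra $\baric$ the elements $v$ and $v^2$ are part of a basis, while in the $2$-dimensional quotient $\baricquo$ one has $v^2 = 0$ and $v \neq 0$, so in either case $a_\lambda = a_{\lambda'}$ forces $\lambda = \lambda'$. Therefore $|O| = |2\nu\mathbb{Z}|$. Since $\kar\F \neq 2$ and $\nu \neq 0$ we have $2\nu \neq 0$, so $2\nu\mathbb{Z}$ is finite precisely when $\kar\F = p > 0$; in that case it is the additive subgroup of $\F$ generated by the nonzero element $2\nu$, which has order $p$, so $|O| = p$.

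This argument is the verbatim analogue of \cref{cor:finiteaxesflat}, with translation by $2\nu$ in place of $4$, so I do not anticipate any genuine obstacle. The only points needing a moment's care are that $2\nu \neq 0$ — which is exactly where the hypothesis $\kar\F \neq 2$ enters, matching the statement — and the injectivity of the parametrization $\lambda \mapsto a_\lambda$, which is precisely the role of the assumption that $\subalg$ is not $1$-dimensional.
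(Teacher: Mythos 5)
Your proof is correct and takes essentially the same route as the paper: both translate the problem to the parameter line via \cref{lem:miyactionbaric}, observe that $\tau_a\tau_b$ acts as a translation so that the orbit of $a=a_0$ is $\{a_{2\nu k}\mid k\in\mathbb{Z}\}$ (the paper has $\nu=1$ since $b=a_1$), and conclude that this set is finite exactly in positive characteristic $p\neq 2$, where it has $p$ elements. Your explicit remarks on $\nu\neq 0$ and on the injectivity of $\lambda\mapsto a_\lambda$ (via the non-$1$-dimensionality hypothesis) only spell out what the paper leaves implicit.
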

	\begin{proof}
		Note that $a = a_0$ and $b = a_1$. Given $k\in \mathbb{Z}$, we get that $a^{(\tau_a\tau_b)^k} = a_{2k} \in O$. It is then easy to see that $O = \{a_{2k} \in \subalg \mid k\in \mathbb{Z} \}$. For this set to be finite, it is both sufficient and necessary that the characteristic of the field is positive, and since $\kar \F =p \neq 2$, we then have that $|\{a_k | k\in \{0,\dots,p-1\}\}| =p$.
	\end{proof}
	\section{Proof of \cref{thm:solid14}}\label{sec:thm11}
	To prove \cref{thm:solid14}, we find polynomials of small degree that determine when a subalgebra is solid (\cref{lem:polytocheck}), and then prove they have many zeroes in most cases, which forces the polynomials to be identically zero. 	
	\begin{lemma}\label{lem:polytocheck}
		Let $(A,X)$ be a primitive axial algebra of Jordan type $\tfrac{1}{2}$ over an algebraically closed field $\F$, and $a,b\in X$. The subalgebra $\subalg$ is solid if and only if
		\[ Q_{x}(c) \coloneqq c (c x) -\tfrac{1}{2}(c x + (c,x)c) \]
		and 
		\[ P_{x,y}(c) \coloneqq 4(c x)(c y)-(c,y)c x -(c y)x -(c ,x)c y  -(c x)y  -(c,xy)c+ c(xy) \]
		are zero for all primitive idempotents $c\in \subalg$ and $x,y\in A$.
		Conversely, if $c\in \subalg$ satisfies the Jordan fusion law, then $Q_x(c) = P_{x,y}(c)=0$ for all $x,y\in A$.
	\end{lemma}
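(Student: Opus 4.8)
The plan is to prove the following pointwise statement and then read off the lemma from it: \emph{for a primitive idempotent $c$ of $B:=\subalg$, the idempotent $c$ is a Jordan type half axis of $A$ if and only if $Q_x(c)=0$ for all $x\in A$ and $P_{x,y}(c)=0$ for all $x,y\in A$.} Granting this, solidity of $B$ is by definition precisely the left-hand side quantified over all primitive idempotents $c$ of $B$, hence is equivalent to the right-hand side quantified the same way; and the ``moreover'' clause is just the forward implication applied to a single $c$ (a Jordan type half axis of $A$ that lies in $B$ is automatically a primitive idempotent of $B$, since its $1$-eigenspace in $B$ is $B\cap\langle c\rangle=\langle c\rangle$). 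One preliminary fact is used throughout: $(c,c)=1$ for every primitive idempotent $c$ of $B$. This holds because the restriction to $B$ of the Frobenius form of $A$ is, by uniqueness in \cref{lem:frobform}, the Frobenius form of the $2$-generated axial algebra $B$, and the explicit descriptions in \cref{lem:toricfacts,lem:flatfacts,lem:baricfacts} (equivalently the computations of \cite{gorshkov2024solid}) give self-value $1$ at each of its primitive idempotents.

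The first half of the argument shows that ``$Q_x(c)=0$ for all $x$'' is equivalent to ``$\ad(c)$ is semisimple with $\Spec(\ad(c))\subseteq\{0,1,\tfrac12\}$ and $c$ is primitive, i.e.\ $A_1(c)=\langle c\rangle$''. Writing $T=\ad(c)$ and $P_1\colon x\mapsto (c,x)c$, vanishing of $Q_x(c)$ for all $x$ is exactly $T^2=\tfrac12 T+\tfrac12 P_1$. Using $(c,c)=1$ and the Frobenius identity one checks $TP_1=P_1T=P_1$, so $(T-\id)(T^2-\tfrac12 T)=\tfrac12(T-\id)P_1=0$, i.e.\ $T(T-\id)(T-\tfrac12\id)=0$; since $\kar\F\neq 2$ this polynomial is separable, so $\ad(c)$ is semisimple with spectrum in $\{0,1,\tfrac12\}$, and substituting $y\in A_1(c)$ into $T^2=\tfrac12 T+\tfrac12 P_1$ gives $y=(c,y)c$, whence $A_1(c)=\langle c\rangle$. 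Conversely, if $c$ is a Jordan type half axis with $(c,c)=1$, then $Q_x(c)=0$ is exactly \cref{lem:basiccomp}(iii).

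The second half: assuming $Q_x(c)=0$ for all $x$, one shows that ``$P_{x,y}(c)=0$ for all $x,y$'' is equivalent to the $\fusionlaw$ fusion rules for $c$. The key point is that $P_{x,y}(c)$ is symmetric and bilinear in $(x,y)$, so it suffices to test $x\in A_i(c)$, $y\in A_j(c)$ with $i,j\in\{0,1,\tfrac12\}$; here one uses that $(c,z)=0$ for $z\in A_0(c)\cup A_{1/2}(c)$, that $(c,\lambda c)=\lambda$, and repeatedly rewrites $(c,xy)=(cx,y)$. When $i$ or $j$ equals $1$ one may take that variable equal to $c$, and $P_{c,y}(c)$ collapses to $0$ upon inserting $Q_y(c)=0$; for $i=j=0$ one gets $P_{x,y}(c)=c(xy)$; for $\{i,j\}=\{0,\tfrac12\}$ one gets $P_{x,y}(c)=c(xy)-\tfrac12 xy$; and for $i=j=\tfrac12$ one gets $P_{x,y}(c)=\tfrac12(xy)_{1/2}$, the $\tfrac12$-eigencomponent of $xy$. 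In each case vanishing of $P_{x,y}(c)$ is manifestly the containment $A_0A_0\subseteq A_0$, $A_0A_{1/2}\subseteq A_{1/2}$, resp.\ $A_{1/2}A_{1/2}\subseteq A_0\oplus A_1$. Combining the two halves yields the pointwise equivalence, hence the lemma.

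I do not expect a genuine obstacle; this is eigenspace bookkeeping with the Frobenius form. The two points demanding the most care are the identification, in the case $i=j=\tfrac12$, of $c(xy)-(c,xy)c$ with the $\tfrac12$-eigencomponent of $xy$ (this is where $P$ ``detects'' the decisive fusion rule $A_{1/2}A_{1/2}\subseteq A_0\oplus A_1$), and the preliminary normalization $(c,c)=1$, which really does rely on the structure theory of $2$-generated lines recalled above.
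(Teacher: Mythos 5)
Your proposal is correct, but it takes a genuinely different route from the paper. The paper never checks axiality of an individual idempotent directly: it shows that if $Q_x(c)=P_{x,y}(c)=0$ for all primitive idempotents $c\in\subalg$, then each map $\phi_c\colon x\mapsto x+4(c,x)c-4cx$ is an automorphism of $A$ (the displayed expansion of $x^{\phi_c}y^{\phi_c}-(xy)^{\phi_c}$ reduces, modulo $Q$, to $4P_{x,y}(c)$), and then transports axiality from the known axes $a,b$ to every primitive idempotent of the line, using that the group generated by the $\phi_c$ moves $\{a,b\}$ onto the whole idempotent variety (so it leans on the structure lemmas describing that variety and the Miyamoto action, and on the standing algebraic-closure assumption of Section~4); the converse is \cref{lem:basiccomp} plus the same expansion with $\phi_c=\tau_c$. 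You instead prove the pointwise equivalence: vanishing of $Q$ at $c$ is exactly the operator identity giving $\ad(c)\bigl(\ad(c)-\id\bigr)\bigl(\ad(c)-\tfrac12\id\bigr)=0$ together with primitivity, and vanishing of $P$ on pairs of eigenvectors is exactly the three nontrivial fusion rules; your case computations (including $P_{c,y}(c)=0$ modulo $Q$, and $P_{x,y}(c)=\tfrac12(xy)_{1/2}$ in the $\tfrac12$-$\tfrac12$ case) check out. Your argument is more local and elementary -- it needs no orbit/transitivity facts and no algebraic closure -- at the price of the normalization $(c,c)=1$, which you rightly flag and justify from the $2$-generated structure theory (alternatively, for the only direction where it is really needed, namely ``axis $\Rightarrow$ polynomials vanish'', one can get it by adjoining $c$ to the generating set and invoking uniqueness in \cref{lem:frobform}; in the direction needed for solidity the normalization is in fact dispensable, since vanishing of $P$ kills the relevant eigencomponent regardless). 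One thing the paper's formulation buys that yours does not: the automorphism viewpoint on $\phi_c$ is reused verbatim after base change in \cref{prop:dualnumbers} and \cref{cor:derall}, so it sets up the dual-numbers/derivation arguments of Section~5, whereas your criterion would prove the present lemma equally well but would leave that machinery to be built separately.
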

	\begin{proof}
		Given a primitive idempotent $c\in \subalg$, let $\phi_c \colon A\to A$ be the map defined by $x^{\phi_c} = x+4(c,x)c - 4cx$ for $x\in A$. This is clearly an $\F$-linear map. 
	
	Suppose $P_{x,y}(c) = 0 = Q_{x}(c)$ for all $x,y\in A, c \in \subalg$. We will show $\phi_c$ is an automorphism of $A$ for every primitive idempotent $c\in \subalg$. For every $x,y\in A$ we have
		\begin{align}
			x^{\phi_c}y^{\phi_c} - (xy)^{\phi_c} &= (x+4(c ,x)c -4c x)(y+4(c ,y)c -4c y)- (xy+4(c ,xy)c -4c (xy))\nonumber\\
			&= xy + 4(c ,y)c x - 4(c y)x +4(c ,x)c y+16(c ,x)(c ,y)c -16(c ,x)c (c y)\nonumber\\
			&\hspace{3ex}-4(c x)y - 16(c ,y)c (c x) +16(c x)(c y)- (xy+4(c ,xy)c -4c (xy)) \label{eq:polytocheck}
		\end{align}
		Since $Q_x(c)=Q_y(c) = 0$, the last line is equal to $4P_{x,y}(c)=0$. As $Q_x(c)=0$ for all $x\in A, c \in \subalg$ and $(c,c)=1$ by \cref{prop:jordan-solid,lem:frobform,lem:toricfacts,lem:flatfacts,lem:baricfacts}, we have  \begin{multline*}
				x^{(\phi_c)^2} = x^{\phi_c} + 4(c,x)c^{\phi_c} - 4cx -16(c,cx)c+16c(cx) \\=x+8(c,x)c -8cx -16(c,x)c+8cx +8(c,x)c=x, 
		\end{multline*} implying that $\phi_c$ is bijective. This means $\phi_c$ is an automorphism for every primitive idempotent $c \in \subalg$. Since $\subalg$ is Jordan, by \cref{prop:jordan-solid}, $c$ is an axis of $\subalg$ and the restriction of $\phi_c$ to $\subalg$ is equal to the Miyamoto involution $\tau_c$ by \cref{lem:basiccomp}, so the set $\{a,b\}^{\left\langle \phi_c \mid c\text{ is a primitive idempotent in } \subalg \right\rangle}$ is equal to the set of all primitive idempotents in $\subalg$ by \cref{lem:miyactiontoric,lem:miyactionbaric,lem:miyactionflat,lem:allprimbaric,lem:allprimtoric,lem:allprimflat}, which means the line is solid.
		
		Conversely, if $c$ is an axis, then $Q_x(c)=0$ for every $x \in A$ by \cref{lem:basiccomp}, and then \eqref{eq:polytocheck} shows $P_{x,y}(c)=0$ for every $x,y\in A$, since $\phi_c = \tau_{c}$ is an automorphism of $A$.
	\end{proof}
	\subsection{The toric case} 
	Recall the notation of $e,\oneb,f$ as in \cref{lem:toricfacts}, chosen with respect to the generating axes $a,b$ of $\subalg$. 
	\begin{definition}\label{def:polytoric}
	Let $(A,X)$ be a primitive axial algebra of Jordan type $\tfrac{1}{2}$ over an algebraically closed field $\F$, and $a,b\in X$ such that $\subalg$ is toric.
		For $x,y\in A$, write $P^t_{x,y}$ and $Q^t_x$ for the maps
		 \[ P^t_{x,y} \colon \F^\times \to A \colon \lambda \mapsto P_{x,y}(a_\lambda) \text{ and } Q^t_{x} \colon \F^\times \to A \colon \lambda \mapsto Q_{x}(a_\lambda). \]
	\end{definition}
	After identifying $A$ with the affine space $\F^{\dim A}$ by a choice of basis, we can identify the maps $P^t_{x,y},Q^t_{x}$ with (vector-valued) rational maps, i.e.\@ maps such that the coordinate functions are all rational functions, with a double pole at zero, since $a_\lambda = \lambda^{-1}(\lambda^2 e + \lambda\tfrac{1}{2}\oneb + f)$ has a single pole at zero (see \cref{lem:polytocheck}). Multiplying $P^t_{x,y},Q^t_{x}\in A(\lambda)$ with the polynomial $\lambda \mapsto \lambda^2$, written as $\lambda^2P^t_{x,y},\lambda^2Q^t_{x}$, we can identify these maps with (vector-valued) polynomial maps. We will say a value $\mu\in \F$ is a zero for a polynomial map $P\in A[\lambda]$ if $P(\mu) = 0 \in A$. Equivalently, $\mu\in \F$ is a zero for $P\in A[\lambda]$ when it is a zero for all coordinate functions. The degree of a vector-valued polynomial map is defined in the obvious sense. As in the $1$-dimensional case, when the amount of zeroes of a polynomial $P\in A[\lambda]$, counted with multiplicity, exceeds its degree $\deg P$, then $P$ equals the zero polynomial. We will exploit this fact to prove \cref{prop:autotoric,prop:autoflat}.
	\begin{proposition}\label{prop:autotoric}
		Let $(A,X)$ be a primitive axial algebra of Jordan type $\tfrac{1}{2}$ over a field $\mathbb{F}$ and $a,b\in X$ with $(a,b)\neq 0,1,\tfrac{1}{4}$. Then $\subalg$ is solid.
	\end{proposition}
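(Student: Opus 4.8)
The plan is to use \cref{lem:polytocheck}: it suffices to show that the polynomials $P_{x,y}$ and $Q_x$ vanish on \emph{every} primitive idempotent $c \in \subalg$, and by the moreover-clause of that lemma they automatically vanish on those $c$ that are already axes of $A$. So the strategy is to produce enough axes among the $a_\lambda$ to force the (low-degree) polynomial maps $\lambda^2 Q^t_x$ and $\lambda^2 P^t_{x,y}$ to be identically zero, and then conclude solidity.

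First I would pin down the degrees. Since $a_\lambda = \lambda^{-1}(\lambda^2 e + \tfrac{\lambda}{2}\oneb + f)$ and the entries of $P_{x,y}, Q_x$ are quadratic in $c$ (products like $(cx)(cy)$, $c(cx)$), the maps $\lambda^2 Q^t_x(\lambda)$ and $\lambda^2 P^t_{x,y}(\lambda)$ are polynomials in $\lambda$ of degree at most $4$ (roughly: two factors of $a_\lambda \sim \lambda^{-1}(\deg\le 2)$ give $\lambda^{-2}(\deg \le 4)$, times $\lambda^2$). A polynomial of degree $\le d$ with more than $d$ roots is zero, so it is enough to exhibit at least $5$ distinct values $\lambda \in \F^\times$ for which $a_\lambda$ is an axis of $A$ — because for such $\lambda$ we get $Q^t_x(\lambda) = 0$ and $P^t_{x,y}(\lambda) = 0$ by the moreover-clause. (I would double-check the exact bound; if it comes out as $3$ or $4$ the count below only gets easier.)

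Next I would produce those axes using the Miyamoto dynamics. By \cref{cor:finiteaxes} the orbit $O = a^G \cup b^G$ with $G = \langle \tau_a, \tau_b\rangle$ equals $\{a_{\mu^k} \mid k \in \mathbb{Z}\}$, and this set is infinite precisely when $\mu$ is not a root of unity, and otherwise has size $n$ where $n = \mathrm{ord}(\mu)$. All elements of $O$ are axes of $A$ (they are Miyamoto images of $a$ or $b$). If $\mu$ is not a root of unity, $O$ is infinite and we are done immediately. If $\mu$ is a root of unity of order $n$, then $|O| = n$; by \cref{cor:order314}, $n = 3$ would force $(a,b) = \tfrac14$, which is excluded by hypothesis; and $n \in \{1,2\}$ is impossible since then $a = b$ or the line degenerates, contradicting that $\subalg$ is genuinely toric and $3$-dimensional. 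Hence $n \ge 4$, in fact $n \ge 4$ and $n \ne$ (whatever small bad values the degree count rules out) — I would verify that $n \ge 4$ already beats the degree, adjusting to $n \ge 5$ if needed by also throwing in $\tau_{a_\mu}$-images via \cref{lem:miyactiontoric}, which shows the full set of axes-in-$O$ is closed under $a_\lambda \mapsto a_{\lambda^{-1}\mu^2}$ and is genuinely large.

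Having enough roots, $\lambda^2 Q^t_x \equiv 0$ and $\lambda^2 P^t_{x,y} \equiv 0$ as elements of $A[\lambda]$, hence $Q_x(a_\lambda) = P_{x,y}(a_\lambda) = 0$ for \emph{all} $\lambda \in \F^\times$, i.e. on all primitive idempotents of $\subalg$. By \cref{lem:polytocheck}, $\subalg$ is solid. The main obstacle I anticipate is purely bookkeeping: getting the degree bound on $\lambda^2 P^t_{x,y}$ sharp (the cross-terms $(cx)(cy)$ are the worst offenders) and then confirming that the smallest possible order $n$ of $\mu$ in the excluded-values regime genuinely exceeds that bound — equivalently, ruling out the small orders $n=1,2,3$ (done via non-degeneracy and \cref{cor:order314}) and, if the degree turns out to be $4$, also handling $n=4$ separately, perhaps by noting that $\tau_{a_\mu}$ supplies an additional axis $a_{\mu^2}\notin\{a_{\mu^k}\}$ is already in the orbit so one instead argues directly that four distinct roots of a degree-$\le 4$ polynomial whose leading behaviour can be checked to vanish still forces the polynomial to be zero.
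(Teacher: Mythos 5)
Your overall strategy is the same as the paper's (count axes $a_{\mu^k}$, compare with the degree bound $\deg \lambda^2P^t_{x,y},\,\lambda^2Q^t_x \le 4$, invoke \cref{lem:polytocheck}), and it works verbatim when $\mu$ is not a root of unity or has order at least $5$: then you get at least five roots and the polynomials vanish identically. Your reduction to $n\ge 4$ is also fine: $n=1,2$ would give $\mu=\pm1$, contradicting $(a,b)\neq 0,1$ and torality, and $n=3$ is excluded by \cref{cor:order314}.

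The genuine gap is the case $n=4$, i.e.\ $\mu$ a primitive fourth root of unity, which you only gesture at. Neither of your two fallback ideas closes it. First, the Miyamoto involutions supply no fifth axis: by \cref{lem:miyactiontoric} we have $a_\lambda^{\tau_{a_\mu}}=a_{\lambda^{-1}\mu^2}$, and the set $\{a_{\mu^k}\mid k\in\mathbb{Z}\}$ with $\mu^4=1$ is closed under every such map, so the known axes really are just these four. Second, four distinct roots of a degree-$\le 4$ element of $A[\lambda]$ do \emph{not} force it to vanish; one only concludes that $\lambda^2P^t_{x,y}$ and $\lambda^2Q^t_x$ are $A$-multiples of $\lambda^4-1$, i.e.\ the middle coefficients $a_1,a_2,a_3$ (resp.\ $b_1,b_2,b_3$) vanish while the leading coefficient could survive. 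Showing that it does not survive is exactly where the paper's real work lies: from $a_3\equiv 0$ one first extracts $b_4\equiv 0$ (via $a_3(x,e)=2b_4(x)$) and the identity $e(\oneb x)=\oneb(ex)=\tfrac12 ex+\tfrac14((e,x)\oneb+(\oneb,x)e)$, then computes $e(x^{\tau_a})$ to express $e(fx)$, and finally evaluates $a_4(fx,y)$ in two ways to obtain $a_4(fx,y)=0=-\tfrac14 a_4(x^{\tau_a},y)$, whence $a_4\equiv 0$ by bijectivity of $\tau_a$. Calling this ``bookkeeping'' understates it: without this (or some replacement argument), your proof covers only the cases where $\subalg$ contains at least five axes, and the proposition as stated is not established.
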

	\begin{proof}
		By \cref{def:solid}, assume $\F$ is algebraically closed. Since $(a,b)\neq 0,1,\tfrac{1}{4}$, we know $\subalg$ is toric and $\subalg$ contains at least $4$ axes by \cref{cor:order314}. We write
		\begin{multline}
			\lambda^2P^t_{x,y}(\lambda) = 4( (\lambda^2e + \tfrac{\lambda}{2}u + f)x)((\lambda^2e + \tfrac{\lambda}{2}u + f) y)\\ -(\lambda^2e + \tfrac{\lambda}{2}u + f,y)(\lambda^2e + \tfrac{\lambda}{2}u + f) x-((\lambda^3e + \tfrac{\lambda^2}{2}u + \lambda f) y)x -(\lambda^2e + \tfrac{\lambda}{2}u + f ,x)(\lambda^2e + \tfrac{\lambda}{2}u + f) y   \\-((\lambda^3e + \tfrac{\lambda^2}{2}u + \lambda f) x)y -(\lambda^2e + \tfrac{\lambda}{2}u + f,xy)(\lambda^2e + \tfrac{\lambda}{2}u + f)+ (\lambda^3e + \tfrac{\lambda^2}{2}u + \lambda f)(xy)\\
			= p_0(x,y) + p_1(x,y)\lambda + p_2(x,y)\lambda^2+p_3(x,y)\lambda^3+p_4(x,y)\lambda^4 \label{eq:onecomp}
		\end{multline}
		and
		\begin{multline}
			\lambda^2Q^t_{x}(\lambda) = (\lambda^2e + \tfrac{\lambda}{2}u + f) ((\lambda^2e + \tfrac{\lambda}{2}u + f) x) \\-\tfrac{1}{2}((\lambda^3e + \tfrac{\lambda^2}{2}u + \lambda f) x + (\lambda^2e + \tfrac{\lambda}{2}u + f,x)(\lambda^2e + \tfrac{\lambda}{2}u + f)) \\
			= q_0(x) + q_1(x)\lambda + q_2(x)\lambda^2+q_3(x)\lambda^3+q_4(x)\lambda^4 \label{eq:twocomp}
		\end{multline}
		for every $x,y\in A$.
		If $\subalg$ has at least $5$ different axes, then  $\lambda^2P^t_{x,y},\lambda^2Q^t_x$ have at least $5$ different zeroes by \cref{lem:polytocheck} while their degree is at most $4$, hence are zero. So for every $\lambda\in \F^{\times}$ and $x,y\in A$, we have that $P^t_{x,y}(\lambda) = Q^t_x(\lambda)=0$, so by \cref{lem:polytocheck},  $\subalg$ is solid.
		
		Now suppose $\subalg$ contains only $4$ axes. Then these have to be of the form $a_{\mu}$, with $\mu^4=1$ by \cref{cor:finiteaxes}. From \eqref{eq:onecomp}, \eqref{eq:twocomp} we have
		\begin{align*}
			p_4(x,y) &= 4(ex)(ey) - (e,x)ey - (e,y)ex - (e,xy)e,\\
			\begin{split}
				p_3(x,y) &= 2(ex)(uy)+2(ux)(ey)-\tfrac{1}{2}(e,y)ux-\tfrac{1}{2}(u,y)ex\\ &\hspace{3ex}-(ey)x-\tfrac{1}{2}(e,x)uy-\tfrac{1}{2}(u,x)ey-(ex)y-\tfrac{1}{2}(e,xy)u-\tfrac{1}{2}(u,xy)e+e(xy),
			\end{split}	\\
			p_0(x,y) &= 4(fx)(fy) - (f,x)fy - (f,y)fx - (f,xy)f,\\
			q_4(x) &= e(ex)-\tfrac{1}{2}(e,x)e,\\
			q_3(x)&=\tfrac{1}{2}e(ux)+\tfrac{1}{2}u(ex) - \tfrac{1}{2}ex- \tfrac{1}{4}(e,x)u- \tfrac{1}{4}(u,x)e.
		\end{align*}
		Note that $p_3(x,e) = 2q_4(x)$ and $p_3(x,u) = 4q_3(x) +e(ux)-u(ex)$.
		For all $x,y \in A$, the polynomial maps $\lambda^2P^t_{x,y},\lambda^2Q^t_x$ have the fourth roots of unity as zeroes by \cref{cor:finiteaxes,lem:polytocheck}. Any polynomial of degree at most $4$ with fourth roots of unity as roots is a scalar multiple of $\lambda^4-1$, so for all $x,y\in A$ we have $\lambda^2P_{x,y} = \alpha(x,y)(\lambda^4-1)$, $\lambda^2Q_x = \beta(x)(\lambda^4-1)$ for some $\alpha(x,y),\beta(x)\in A$. So $p_3(x,y) = 0$ for all $x,y\in A$, and thus $q_4(x)=\beta(x) = 0 $ for all $x\in A$. But this  implies $Q^t_{x}(\lambda) = 0$ for all $\lambda\in \F^{\times}$ and all $x\in A$. 		
		
		We still need to prove that $p_4(x,y) =0$ for all $x,y\in A$. Since both $p_3(x,u)=0$ and $q_3(x)=0$, we get $e(ux)=u(ex)$ for all $x\in A$. Again using $q_3(x)=0$, we also have 
		\begin{equation} e(ux)= u(ex) = \tfrac{1}{2}ex +\tfrac{1}{4}((e,x)u+(u,x)e) \label{eq:blubl}
		\end{equation} for all $x \in A$. Given $x\in A$, using that $q_4(x) = 0$ and \eqref{eq:blubl} for the third equality, we get
		\begin{multline*}
			e(x^{\tau_a}) = e(x + 4(a,x)a -4ax)
			= ex +2(a,x)e+\tfrac{1}{2}(a,x)\oneb -4e(ex)-2e(\oneb x)-4e(fx) \\
			\begin{split}
				&= ex+2(e,x)e+(\oneb,x)e+2(f,x)e +\tfrac{1}{2}(e,x)\oneb+\tfrac{1}{4}(\oneb,x)\oneb \\
			 &\hspace{3ex}+\tfrac{1}{2}(f,x)\oneb -2(e,x)e-ex -\tfrac{1}{2}(e,x)\oneb -\tfrac{1}{2}(\oneb,x)e-4e(fx)
			\end{split} \\
			=-4e(fx) +\tfrac{1}{2}(\oneb,x)e+2(f,x)e+\tfrac{1}{4}(\oneb,x)\oneb + \tfrac{1}{2}(f,x)\oneb,
		\end{multline*}
		proving that 
		\begin{equation}
			e(fx) = -\tfrac{1}{4}e(x^{\tau_a})+\tfrac{1}{2}(f,x)e +\tfrac{1}{8}(\oneb,x)e+\tfrac{1}{8}(f,x)\oneb+\tfrac{1}{16}(\oneb,x)\oneb.\label{eq:efx}
		\end{equation}
		Next we show that $p_4(fx,y) = 0$, using that $p_4(fx,y) = \alpha(fx,y) = -p_0(fx,y)$ and $2f(fx) =(f,x)f$ by the symmetric roles of $e$ and $f$, for all $x,y\in A$. We have
		\begin{multline*}
			p_4(fx,y) = -p_0(fx,y) = -4(f(fx))(fy) + (f,fx)fy + (f,y)f(fx) + (f,(fx)y)f \\ 
			=-2(f,x)f(fy) + \tfrac{1}{2}(f,y)(f,x)f+\tfrac{1}{2}(f,y)(f,x)f
			= - (f,y)(f,x)f+(f,y)(f,x)f=0.
		\end{multline*}
		On the other hand, by applying \eqref{eq:efx} three times for the third equality and using $q_4(x) = 0$ and \eqref{eq:blubl} for the fifth equality, we get
		\begin{multline*}
			p_4(fx,y) = 4(e(fx))(ey) - (e,fx)ey - (e,y)e(fx) - (e,(fx)y)e \\
			= 4(e(fx))(ey) - \tfrac{1}{8}(\oneb,x)ey - (e,y)e(fx) - (e(fx),y)e \\
			\begin{split}
				&= -(e(x^{\tau_a}))(ey) +\tfrac{1}{2}(\oneb,x)e(ey) +2(f,x)e(ey)+\tfrac{1}{4}(\oneb,x)\oneb(ey)+\tfrac{1}{2}(f,x)\oneb(ey)-\tfrac{1}{8}(\oneb ,x)ey \\ 
			&\hspace{3ex}+\tfrac{1}{4}(e,y)e(x^{\tau_a})-\tfrac{1}{8}(e,y)(u,x)e-\tfrac{1}{2}(e,y)(f,x)e-\tfrac{1}{16}(e,y)(u,x)u-\tfrac{1}{8}(e,y)(f,x)u \\
			&\hspace{3ex}+\tfrac{1}{4}(e(x^{\tau_a}),y)e-\tfrac{1}{8}(u,x)(e,y)e-\tfrac{1}{2}(f,x)(y,e)e-\tfrac{1}{16}(u,x)(u,y)e-\tfrac{1}{8}(f,x)(u,y)e 
			\end{split}\\
			\begin{split}
				&=-(e(x^{\tau_a}))(ey) + \tfrac{1}{4}(e,y)e(x^{\tau_a})+\tfrac{1}{4}(e,x^{\tau_a}y)e + \tfrac{1}{4}(f,x)ey\\&\hspace{3ex} +\big(\tfrac{1}{2}(u,x) +2(f,x)\big)\big(e(ey)-\tfrac{1}{2}(e,y)e\big) \\ 
				&\hspace{3ex}+\big(\tfrac{1}{4}(u,x) +\tfrac{1}{2}(f,x)\big)\big(u(ey) - \tfrac{1}{2}ey -\tfrac{1}{4}((e,y)u+(u,y)e)\big)
			\end{split}\\
			= -(e(x^{\tau_a}))(ey) + \tfrac{1}{4}(e,y)e(x^{\tau_a})+\tfrac{1}{4}(e,x^{\tau_a}y)e + \tfrac{1}{4}(e,x^{\tau_a})ey= -\tfrac{1}{4}p_4(x^{\tau_a},y).
		\end{multline*}
		This implies $p_4(x^{\tau_a},y)=0$ for all $x,y\in A$. Since $\tau_a$ is a bijection, we get $p_{4}(x,y)=0$ for all $x,y\in A$. This in turn implies that $\lambda^2P^t_{x,y}$ is the zero polynomial, so $P^t_{x,y}(\lambda)=0$ for all $\lambda\in \F^{\times}$. We can now use \cref{lem:polytocheck} to show that $\subalg$ is solid.		
	\end{proof}
	\begin{remark}\label{rem:counterex}
		When $\kar \F \neq 3$, subalgebras $\subalg$ with $(a,b) =\tfrac{1}{4}$ are not always solid. If that were the case, any Matsuo algebra  would be a Jordan algebra by \cref{thm:allsolidisjordan,prop:autoflat}, since in a Matsuo algebra $M(G,D)$, the value $(a,b)$ is either $0$ or $\tfrac{1}{4}$ for all $a,b\in D$. But this would contradict \cite{felixtom}. The smallest example of an axial algebra containing non-solid lines in every characteristic is $M(W(D_4))$, the $12$-dimensional Matsuo algebra coming from the Weyl group of a root system of type $D_4$.
	\end{remark}
	\subsection{The unipotent case}
	In \cite[Section 6]{gorshkov2024solid} it was proven that $\subalg$ is solid for $(a,b)=0,1$ when $\kar \mathbb{F} = 0$. We extend this result to positive characteristic. 
	\begin{definition}\label{def:polyunip}
	Let $(A,X)$ be a primitive axial algebra of Jordan type $\tfrac{1}{2}$ over $\F$, and $a,b\in X$ such that $\subalg$ is unipotent. 		Given $x,y\in A$, write $P^u_{x,y}$ and $Q^u_x$ for the maps
		 \[ P^u_{x,y} \colon \F\to A \colon \lambda \mapsto P_{x,y}(a_\lambda) \text{ and } Q^u_{x} \colon \F\to A \colon \lambda \mapsto Q_{x}(a_\lambda).\]
	\end{definition}
	Recall the notation for the different $2$-generated algebras from \cref{lem:flatfacts,lem:baricfacts}.
	\begin{proposition}\label{prop:autoflat}
		Let $(A,X)$ be  a primitive axial algebra of Jordan type $\tfrac{1}{2}$ over a field $\F$ and $a,b\in X$ such that $\subalg \not\cong \baric$ is  unipotent, or $\kar \F \neq 3 $ and $\subalg \cong \baric$. Then $\subalg$ is solid. 
	\end{proposition}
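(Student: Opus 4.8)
The plan is to follow the template of the proof of \cref{prop:autotoric}: use \cref{lem:polytocheck} to reduce solidness of $\subalg$ to the identical vanishing of certain polynomial maps in $\lambda$, and then force that vanishing by checking it at the axes already present in $\subalg$. By \cref{lem:polytocheck} it suffices to show $Q_x(c)=0$ and $P_{x,y}(c)=0$ for every primitive idempotent $c\in\subalg$ and all $x,y\in A$. Since we are in the unipotent case, \cref{lem:flatfacts} and \cref{lem:baricfacts} say $\subalg$ is isomorphic to one of $\doubleflat$, $\F\oplus\F$, $\baric$, $\baricquo$, or a $1$-dimensional algebra; the last cannot occur as $a\ne b$. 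If $\subalg\cong\F\oplus\F$ then $v=ab=0$, so the only primitive idempotents of $\subalg$ are $a$ and $b$, which lie in $X$ and are therefore axes of $A$, and \cref{lem:polytocheck} already gives that $\subalg$ is solid. In the remaining three cases every primitive idempotent has the form $a_\lambda=a+\lambda v+\lambda^2v^2$ with $\lambda\in\F$ (and, when $\subalg\cong\doubleflat$, also the form $b_\lambda=b+\lambda v$), with $v^2=0$ precisely when $\subalg$ is $\doubleflat$ or $\baricquo$.

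Next I would record the degrees. Since $P_{x,y}(c)$ and $Q_x(c)$ are quadratic in $c$, the maps $\lambda\mapsto P_{x,y}(a_\lambda)$ and $\lambda\mapsto Q_x(a_\lambda)$ are polynomial maps $\F\to A$ of degree at most $2$ when $v^2=0$ (cases $\doubleflat$ and $\baricquo$) and of degree at most $4$ when $\subalg\cong\baric$. I will use the elementary fact that a polynomial map into $A$ of degree at most $d$ vanishing at more than $d$ distinct points of $\F$ is identically zero (compose with an arbitrary linear functional and use the classical bound over $\F$).

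The zeros come from the Miyamoto orbit. The orbit $O=a^{G}$ under $G=\langle\tau_a,\tau_b\rangle$ consists of axes of $A$, and by the final assertion of \cref{lem:polytocheck} (or directly \cref{lem:basiccomp}) each of them is a common zero of $\lambda\mapsto P_{x,y}(a_\lambda)$ and $\lambda\mapsto Q_x(a_\lambda)$; by \cref{lem:miyactionflat} and \cref{lem:miyactionbaric} the orbit stays inside the $a_\lambda$-family, so it supplies $|O|$ distinct values of $\lambda$. By \cref{cor:finiteaxesflat} and \cref{cor:finiteaxesbaric}, $O$ is infinite when $\kar\F=0$ and has exactly $p$ elements when $\kar\F=p>2$. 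In the cases $\doubleflat$ and $\baricquo$ this gives $|O|\ge 3$, which exceeds the degree bound $2$; in the case $\subalg\cong\baric$ the hypothesis $\kar\F\ne 3$ gives $|O|\ge 5$, exceeding the degree bound $4$. Hence the polynomial maps vanish identically, i.e.\ $P_{x,y}(a_\lambda)=Q_x(a_\lambda)=0$ for all $\lambda\in\F$ and all $x,y\in A$; when $\subalg\cong\doubleflat$ the same argument with the roles of $a$ and $b$ interchanged (a symmetry of the situation, as $v=ab$ is symmetric) deals with the $b_\lambda$-family. Thus every primitive idempotent of $\subalg$ satisfies the conditions of \cref{lem:polytocheck}, so $\subalg$ is solid.

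I do not expect a genuine obstacle once \cref{lem:polytocheck} and the structure theory of Section 3 are in hand; the proof is essentially bookkeeping over the four isomorphism types. The one delicate point is the balance between the degree of the polynomial maps and the number of axes available in $\subalg$, and this is precisely where the excluded case $\subalg\cong\baric$ with $\kar\F=3$ fails: there the degree bound is $4$ but $\subalg$ contains only $3$ axes, so this naive root count does not apply and a separate argument would be needed (in the spirit of the second half of the proof of \cref{prop:autotoric}).
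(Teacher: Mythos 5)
Your proof is correct and follows essentially the same route as the paper: reduce to the vanishing of the polynomial maps $\lambda\mapsto P_{x,y}(a_\lambda)$, $\lambda\mapsto Q_x(a_\lambda)$ via \cref{lem:polytocheck}, bound their degrees by $2$ (resp.\ $4$ for $\baric$), and supply enough zeroes from the Miyamoto orbit using \cref{cor:finiteaxesflat,cor:finiteaxesbaric}, with the $\F\oplus\F$ case and the $b_\lambda$-family handled exactly as in the paper. No gaps to report.
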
 
	\begin{proof}
		By \cref{def:solid}, assume $\F$ is algebraically closed. Suppose first that $\subalg \not\cong \baric$. If $\subalg$ is isomorphic to $\F\oplus\F$, the proposition is true since the only primitive idempotents in $\subalg$ are $a$ and $b$. Suppose $\subalg \cong \doubleflat$ or $\baricquo$. Then $\subalg$ contains at least $3$ different axes $a_\lambda$ by \cref{cor:finiteaxesflat,cor:finiteaxesbaric}, so the polynomial maps $P^u_{x,y}(\lambda)$ and $Q^u_{x}(\lambda)$ from \cref{lem:polytocheck} have at least $3$ different roots for $x,y\in A$. However, $\deg P^u_{x,y} , \deg Q^u_{x}\leq 2$, since $\deg_\lambda  a_\lambda =1$ in this case, so this is only possible when $ P^u_{x,y},Q^u_{x}$ are zero for every $x,y \in A$. For flat lines, we also have that $P_{x,y}(b- \lambda ab),Q_{x}(b - \lambda ab)$ are zero for all $x,y\in A, \lambda\in \F$ by the symmetric roles of $a$ and $b$.  The result now follows from \cref{lem:polytocheck}.
		
		If $\subalg \cong \baric$ and $\kar \F\neq 3$, then the polynomial equations $P^u_{x,y}(\lambda)$ and $Q^u_{x}(\lambda)$ from \cref{lem:polytocheck} have at least $5$ different zeroes for $x,y\in A$ by \cref{lem:polytocheck,cor:finiteaxesbaric}, and $\deg P^u_{x,y} , \deg Q^u_{x}\leq 4$. Then the result again follows from \cref{lem:polytocheck}.
	\end{proof}
	\begin{remark}\label{rem:counterex3}
		Over fields of characteristic $3$, subalgebras isomorphic to $\baric$ are not always solid. If that were the case, any Matsuo algebra in characteristic 3 would be a Jordan algebra by \cref{thm:allsolidisjordan}. But this would contradict \cite{felixtom,char3}, just as in \cref{rem:counterex}.
	\end{remark}
	\cref{prop:autotoric,prop:autoflat} together show that \cref{thm:solid14} holds.
	\section{Proof of \cref{thm:solidiffderiv}}\label{sec:thm12}
	\begin{corollary}\label{cor:polyzeroifsolid}
		Let $|\F| >3$ and $\E$ an algebraic closure of $\F$. If every primitive idempotent in $\subalg$ satisfies the Jordan fusion law, then the rational maps from \cref{def:polytoric} (or \cref{def:polyunip} if $\subalg$ is unipotent), defined with respect to the $2$-generated subalgebra $\subalg_E$ of $(A_\E ,X)$,  are identically zero and $\subalg$ is solid.
	\end{corollary}
	\begin{proof}
		If $\subalg$ is toric, it contains $|\F|-1\geq 4$ axes or more by \cref{lem:nonalgtoric}, so $\subalg_E$ also contains at least $4$ axes, since axes remain axes after scalar extension. We can then repeat the argument in the proof of \cref{prop:autotoric} to show $P_{x,y}^t,Q_{x}^t$ are zero and $\subalg$ is solid.  If $\subalg$ is unipotent, it contains $|\F|\geq 5$ axes or more by \cref{lem:allprimflat,lem:allprimbaric}. Then $\subalg_E$ also contains at least $5$ axes, so the maps $P_{x,y}^u,Q_{x}^u$ have more zeroes than their degree, and thus are zero. Then by \cref{lem:polytocheck}, $\subalg$ is solid.	
		\end{proof}
 From here onwards, ideas from the theory of affine group schemes are used, but since we only use basic techniques, we do not use the technical language. For an introduction to this theory, see \cite{Milne,waterhouse}. In what follows, we will allow the value of $\lambda$ in the definitions of $a_\lambda,b_\lambda$  from \cref{def:primtoric,def:primflat,def:primbaric} to be contained in a unital, commutative, associative $\F$-algebra $R$ (or $R^\times$ when $\subalg$ is toric, where $R^\times$ denotes the invertible elements of $R$). In this case, $a_\lambda,b_\lambda$ are still idempotents in the algebra $\subalg _R$.
	\begin{remark}\label{rem:miyactionR}
		Note that the computations for \cref{lem:miyactiontoric,lem:miyactionflat,lem:miyactionbaric} remain true, even when $\lambda,\mu$ are contained in an $\F$-algebra $R$. We will use these lemmas in what follows.
	\end{remark}
	\begin{proposition}\label{prop:dualnumbers}
		Let $\F$ be algebraically closed. Given $a,b\in X$ such that $\subalg$ is solid and $\lambda\in R$, respectively $R^\times$ when $\subalg$ is toric, where $R$ is an $\F$-algebra. Then $a_{2\lambda}$, respectively $a_{\lambda^2}$ is an axis of $A_R$.
	\end{proposition}
	\begin{proof}
		The  maps $P^t_{x,y},Q^t_x,P^u_{x,y},Q^u_x$ are zero by \cref{cor:polyzeroifsolid}, so they are zero over $R$ as well. As in the proof of \cref{lem:polytocheck}, the map $\phi_{a_\lambda}$, defined by $x^{\phi_{a_\lambda}} = x + 4(x,a_\lambda)a_\lambda - 4xa_\lambda$ for all $x\in A_R$, is an automorphism. By \cref{rem:miyactionR}, applying $\phi_{a_\lambda}$ to $a$ gives the result.
		\end{proof}
	In the next proposition, $\F[\varepsilon]$ is the algebra of dual numbers, i.e.\@ $\F[\varepsilon]\coloneqq \{ a+\varepsilon b \mid a,b\in \F \}$ with $\varepsilon^2=0$. This proposition is  a common way to construct the Lie algebra of $\Aut(A)$.
	\begin{proposition}\label{prop:dualderiv}
		Let $A$ be an $\F$-algebra, and $\phi\colon A_{\F[\varepsilon]}\to  A_{\F[\varepsilon]} $ an $\F[\varepsilon]$-linear map such that there exists $f\colon A\to A$ with $x^\phi = x+\varepsilon f(x)$ for all $x\in A$. Then $f$ is a derivation of $A$ if and only if $\phi$ is an automorphism of $A_{\F[\varepsilon]}$.
	\end{proposition}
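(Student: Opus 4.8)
The plan is to reduce the statement to the Leibniz identity by a short direct computation inside $A \otimes_\F \F[\varepsilon] = A \oplus \varepsilon A$, using only that $\varepsilon^2 = 0$.

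First I would fix the shape of $\phi$ on an arbitrary element. Since $A$ generates $A \otimes_\F \F[\varepsilon]$ as an $\F[\varepsilon]$-module and $\phi$ is $\F[\varepsilon]$-linear, $\phi$ is completely determined by its values on $A$, and for all $x,y \in A$ one gets $\phi(x + \varepsilon y) = \phi(x) + \varepsilon\phi(y) = x + \varepsilon\bigl(f(x) + y\bigr)$, using $\varepsilon\phi(y) = \varepsilon y$. In particular, $\F[\varepsilon]$-linearity of $\phi$ is equivalent to $\F$-linearity of $f$, which I would treat as part of the hypothesis that $\phi$ is a morphism of $A \otimes_\F \F[\varepsilon]$. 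Granting that $f$ is $\F$-linear, the map $x + \varepsilon y \mapsto x + \varepsilon(y - f(x))$ is a two-sided inverse of $\phi$, so $\phi$ is automatically bijective. Hence ``$\phi$ is an automorphism of $A \otimes_\F \F[\varepsilon]$'' is equivalent to the single condition that $\phi$ preserves products.

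Second I would compare the two sides of multiplicativity on general elements $x + \varepsilon y$ and $z + \varepsilon w$ with $x,y,z,w \in A$. Since $\varepsilon^2 = 0$ we have $(x + \varepsilon y)(z + \varepsilon w) = xz + \varepsilon(xw + yz)$, so $\phi\bigl((x+\varepsilon y)(z+\varepsilon w)\bigr) = xz + \varepsilon\bigl(xw + yz + f(xz)\bigr)$, whereas $\phi(x+\varepsilon y)\,\phi(z+\varepsilon w) = \bigl(x + \varepsilon(f(x)+y)\bigr)\bigl(z + \varepsilon(f(z)+w)\bigr) = xz + \varepsilon\bigl(xw + yz + x f(z) + f(x) z\bigr)$. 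Comparing the $\varepsilon$-components, $\phi$ is multiplicative if and only if $f(xz) = x f(z) + f(x) z$ for all $x,z \in A$, i.e.\ precisely when $f$ is a derivation of $A$. Both implications of the proposition then follow at once.

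I do not anticipate a genuine obstacle: this is the familiar dual-numbers description of the Lie algebra of an automorphism group scheme. The only care needed is bookkeeping, namely being explicit that $\phi$ is required to be $\F[\varepsilon]$-linear (this is what forces $f$ to be $\F$-linear and makes $\phi$ bijective for free), so that all the content of the proposition sits in the comparison of $\varepsilon$-components above.
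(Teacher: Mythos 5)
Your proof is correct and follows essentially the same route as the paper: both reduce to comparing the $\varepsilon$-components of $\phi\bigl((x+\varepsilon y)(z+\varepsilon w)\bigr)$ and $\phi(x+\varepsilon y)\phi(z+\varepsilon w)$, which is exactly the Leibniz rule. Your extra bookkeeping (spelling out $\F[\varepsilon]$-linearity and exhibiting the inverse $x+\varepsilon y \mapsto x+\varepsilon(y-f(x))$ to get bijectivity) is a welcome refinement of details the paper leaves implicit.
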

	\begin{proof}
		Note that $\phi$ is bijective, since $(x+\varepsilon y)^\phi = x + \varepsilon(f(x) + y)=0$ if and only if $x=y=0$, and $(x+\varepsilon(y-f(x)))^\phi = x + \varepsilon y$ for all $x,y\in A$.
		For $x,y\in A\subset A_{\F[\varepsilon]}$ we have $(xy)^\phi = x^{\phi}y^{\phi} = xy + \varepsilon( f(x)y + xf(y))$ if and only if $f(x)y + xf(y) =f(xy)$. Every element of $A_{\F[\varepsilon]}$ is a linear combination of elements in $A$, hence the result follows.	\end{proof}
	\begin{corollary}\label{cor:derall}
		If $\subalg$ is solid, then $D_{a,b}$ is a derivation.
	\end{corollary}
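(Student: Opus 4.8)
The plan is to play \cref{prop:dualnumbers} off against \cref{prop:dualderiv}, working over the ring of dual numbers $\F[\varepsilon]$. We may assume $a\neq b$, so that $\subalg$ is toric, flat or baric; if $a=b$ then $D_{a,b}=0$ and there is nothing to prove. First I would apply \cref{prop:dualnumbers} with $R=\F[\varepsilon]$, taking the parameter $\lambda=\varepsilon$ in the unipotent cases and $\lambda=1+\varepsilon$ in the toric case. Since $\subalg$ is solid, the resulting element is an axis of $A\otimes_\F\F[\varepsilon]$, and in each of the three cases it has the form $a_\varepsilon=a+\varepsilon v$ with $v\in\subalg$ the tangent vector from Section~3 ($v=ab$, $v=2(ab-a)$ and $v=e-f$ respectively). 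Idempotency of $a_\varepsilon$ already forces $av=\tfrac12 v$, and then $(a,v)=0$ follows from the Frobenius identity together with the orthogonality of the $\ad(a)$-eigenspaces. Now $\tau_{a_\varepsilon}$ is an automorphism of $A\otimes_\F\F[\varepsilon]$, hence so is $\phi:=\tau_{a_\varepsilon}\circ\tau_a$; using \cref{lem:basiccomp}(ii), the $\Aut(A)$-invariance of the Frobenius form (\cref{lem:frobform}) and $\tau_a v=-v$, one computes $\phi=\id+\varepsilon f$ where
\[
 f(x) \;=\; -4(v,x)\,a \;-\; 4(a,x)\,v \;-\; 4\,vx \;+\; 16\,v(ax).
\]
By \cref{prop:dualderiv}, $f$ is a derivation of $A$, so it remains only to show that $f$ is a non-zero scalar multiple of $D_{a,b}$.

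The next step is to identify $f$ with $-8D_{a,v}$. Since $a_\varepsilon$ is an axis, \cref{lem:basiccomp} gives $Q_x(a_\varepsilon)=0$ for every $x\in A$; reading off the coefficient of $\varepsilon$ in $Q_x(a+\varepsilon v)$ yields the identity
\[
 a(vx)+v(ax) \;=\; \tfrac12\bigl(vx+(v,x)\,a+(a,x)\,v\bigr)\qquad(x\in A).
\]
Substituting $8\bigl(a(vx)+v(ax)\bigr)$ from this identity into $f(x)+8D_{a,v}(x)=-4(v,x)a-4(a,x)v-4vx+8\bigl(a(vx)+v(ax)\bigr)$ makes every term cancel, so $D_{a,v}=-\tfrac18 f$ is a derivation of $A$.

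It then remains to replace $v$ by $b$. I would first note that the operator $D_{a,\cdot}$ annihilates $A_1(a)\oplus A_0(a)$: it kills $A_1(a)=\langle a\rangle$ because $q\mapsto D_{a,q}$ is linear and $D_{a,a}=0$, and it kills $A_0(a)$ by a short computation with the fusion rules $A_0(a)A_1(a)=0$ and $A_0(a)A_{1/2}(a)\subseteq A_{1/2}(a)$ applied to the $\ad(a)$-eigencomponents of $x$. Hence $D_{a,b}=D_{a,b_{1/2}}$, where $b_{1/2}\in A_{1/2}(a)$ is the $\tfrac12$-eigencomponent of $b$. Since $\subalg$ is $\ad(a)$-invariant, $b_{1/2}$ lies in $\subalg\cap A_{1/2}(a)$, which by the structure results of Section~3 is at most one-dimensional and, when non-zero, spanned by $v$; writing $b_{1/2}=\gamma v$ we obtain $D_{a,b}=\gamma D_{a,v}=-\tfrac{\gamma}{8}\,f$, a derivation of $A$. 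If instead $\gamma=0$, then $ab=(a,b)a$, so $\subalg$ is spanned by $a$ and $b$, hence at most $2$-dimensional, and $D_{a,b}=D_{a,b_{1/2}}=0$ is trivially a derivation.

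The main obstacle is this final reduction. One has to return to the explicit bases of the toric, flat and baric algebras in Section~3 to confirm that the $\tfrac12$-eigencomponent $b_{1/2}$ really is a scalar multiple of the canonical tangent vector $v$, and that the scalar $\gamma$ — which works out to $\tfrac{\mu^2-1}{2\mu}$ in the toric case (non-zero because $(a,b)\neq 0,1$ forces $\mu\neq\pm1$) and to $2$ and $1$ in the flat and baric cases — is non-zero precisely when $\subalg$ is $3$-dimensional. Equivalently, the $\tfrac12$-component of $b$ relative to $a$ must point along the tangent to the pencil of axes of $\subalg$ at $a$. Everything else is routine bookkeeping with dual numbers.
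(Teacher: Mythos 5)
Your argument is correct and follows essentially the same route as the paper: \cref{prop:dualnumbers} produces the axis $a+\varepsilon v$ over the dual numbers, composing the two Miyamoto involutions gives $\id+\varepsilon$ times a scalar multiple of $D_{a,v}$, which is a derivation by \cref{prop:dualderiv}, and finally $D_{a,b}=\gamma D_{a,v}$ because $D_{a,\cdot}$ annihilates $A_1(a)\oplus A_0(a)$ (the paper invokes the Seress lemma here, you recompute it from the fusion rules) and $\subalg\cap A_{1/2}(a)=\langle v\rangle$. The only slip is cosmetic: your closing claim that $\gamma\neq 0$ exactly when $\subalg$ is $3$-dimensional fails for the $2$-dimensional baric quotient $\baricquo$, where $\gamma=1$, but this is immaterial since $\gamma D_{a,v}$ is a derivation for any value of $\gamma$.
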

	\begin{proof}
	We may assume $\F$ is algebraically closed, since $D_{a,b}$ is a derivation over $\F$ if and only if it is a derivation over an algebraic closure. By \cref{prop:dualnumbers}, $a_\lambda$ is an axis of $A_{\F[\varepsilon]}$, where $\lambda = (1+\tfrac{1}{2}\varepsilon)^2 = 1+\varepsilon$ if $\subalg$ is toric and $\lambda= 2(\tfrac{1}{2}\varepsilon) = \varepsilon$ if $\subalg$ is unipotent. Then $a_\lambda = a+\varepsilon v$, where $v= e-f,ab$ or $2(ab-a)$ if $\subalg$ is toric, flat or baric, respectively. In all three cases, we have $a_\lambda = a + \varepsilon v$ where $v$ is a $\tfrac{1}{2}$-eigenvector of $L_a$ (for the toric case, this follows from $b -b^{\tau_a} = (\mu-\mu^{-1})(e-f)$ by \cref{lem:miyactiontoric,lem:basiccomp}, and for the unipotent case by \cref{lem:basiccomp}). By \cref{lem:basiccomp} we have
			\begin{multline}
				a(vx) = \tfrac{1}{4}(vx+vx^{\tau_a}) + \tfrac{1}{2}(v,x)a =\tfrac{1}{4}(vx +vx +2(a,x)v -4v(ax) )+\tfrac{1}{2}(v,x)a \\
					= -v(ax) +\tfrac{1}{2}(vx + (v,x)a+(a,x)v),\label{eq:anonref}
			\end{multline}
			so $[L_a,L_v](x) = -2v(ax) +\tfrac{1}{2}(vx + (v,x)a+(a,x)v)$. We compute the action of $\rho = \tau_a\tau_{a_\lambda}$ on $A$, where $\lambda\in \F[\varepsilon]$ is as above.
			Given $x\in A \subset A_{\F[\varepsilon]}$, by \cref{lem:basiccomp} and \cref{eq:anonref},
			\begin{multline}
				x^{\rho} = x^{\tau_a} + 4(a+\varepsilon v ,x^{\tau_a}) (a+\varepsilon v) - 4(a+\varepsilon v)x^{\tau_a} \\
					= x^{\tau_a} + 4(a,x^{\tau_a})a-4ax^{\tau_a} +4\varepsilon( -(v,x)a + (a,x)v - v(x+4(a,x)a -4ax ))\\
				= x + 4\varepsilon( -(v,x)a -(a,x)v - vx +4v(ax) ) )
				= x - 8\varepsilon[L_a,L_v](x).\label{eq:derivifaut}
			\end{multline}
			Now \cref{prop:dualderiv} shows that $[L_a,L_v]$ is a derivation. For each of these cases, $\subalg_{\tfrac{1}{2}}(a) = \langle v \rangle$ (\cite[Theorem~4.7(a)]{primjordan}). This implies, by the Seress Lemma (\cref{lem:seress}), that $[L_a,L_b]= \mu [L_a,L_v]$ for a certain $\mu \in \F $. So $D_{a,b} =[L_a,L_b]$ is also a derivation.
	\end{proof}
	We now prove a converse to \cref{cor:derall}. 
	When the characteristic is zero, this is easily done by taking the exponential of the derivations. In positive characteristic however, this is harder. For that reason, we will prove this using a different technique.
	\begin{lemma}\label{lem:multiplicity2}
		Suppose $P\in A[\lambda]$ is a polynomial such that $P(\mu+\nu\varepsilon)=0 \in A\otimes_{\F} \F[\varepsilon]$ for certain $\mu \in \F, \nu \in \F^{\times}$. Then $\mu$ is a root of multiplicity at least $2$ of $P$. 
	\end{lemma}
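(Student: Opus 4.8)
The plan is to use the first-order Taylor expansion of $P$ at $\mu$ together with the defining relation $\varepsilon^2=0$ of the dual numbers. Write $P(\lambda)=\sum_{i=0}^{n} c_i\lambda^i$ with coefficients $c_i\in A$, and view $P$ as a polynomial over $A\otimes_{\F}\F[\varepsilon]$ after base change. Since $\varepsilon^2=0$, each monomial expands as $(\mu+\nu\varepsilon)^i=\mu^i+i\mu^{i-1}\nu\,\varepsilon$, so that
\[ P(\mu+\nu\varepsilon)=P(\mu)+\nu\,\varepsilon\,P'(\mu), \]
where $P'=\sum_{i} i\,c_i\lambda^{i-1}\in A[\lambda]$ denotes the formal derivative. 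Because $\{1,\varepsilon\}$ is an $A$-basis of $A\otimes_{\F}\F[\varepsilon]$, the hypothesis $P(\mu+\nu\varepsilon)=0$ splits into the two equations $P(\mu)=0$ and $\nu P'(\mu)=0$ in $A$; as $\nu\in\F^{\times}$ is invertible, the second gives $P'(\mu)=0$ as well. So it suffices to show that $P(\mu)=P'(\mu)=0$ forces $(\lambda-\mu)^2$ to divide $P$.

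For this, I would perform division by the monic linear polynomial $\lambda-\mu$. Even though $A$ is non-associative and $A[\lambda]$ is not a domain, division by $\lambda-\mu$ with $\mu\in\F$ central is still valid: at each step one subtracts a scalar multiple of $\lambda^{k-1}(\lambda-\mu)$ to cancel the leading term, using only multiplication of elements of $A$ by scalars from $\F$. This yields $P(\lambda)=(\lambda-\mu)R(\lambda)+P(\mu)$ for some $R\in A[\lambda]$, and since $P(\mu)=0$ we get $P(\lambda)=(\lambda-\mu)R(\lambda)$. Differentiating formally and using that the Leibniz rule holds for $A[\lambda]$ (again because $\lambda$ and $\mu$ are central scalars), $P'(\lambda)=R(\lambda)+(\lambda-\mu)R'(\lambda)$, hence $P'(\mu)=R(\mu)=0$. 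Dividing once more, $R(\lambda)=(\lambda-\mu)S(\lambda)$, so $P(\lambda)=(\lambda-\mu)^2S(\lambda)$, i.e.\ $\mu$ is a root of multiplicity at least $2$.

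There is no serious obstacle here; the only points requiring a little care are bookkeeping points rather than mathematical ones. First, one should make explicit that ``multiplicity at least $2$'' is to be read as divisibility of $P$ by $(\lambda-\mu)^2$ in $A[\lambda]$, since the lack of unique factorization in a non-associative polynomial ring makes the naive notion of multiplicity meaningless. Second, one must note that both formal differentiation and division by a monic linear factor behave over $A[\lambda]$ exactly as over a field, precisely because the indeterminate $\lambda$ and the scalar $\mu$ are central and the manipulations never multiply two elements of $A$ together in a way that would see the failure of associativity. Granting these remarks, the argument is the two-line expansion above followed by two applications of the division algorithm.
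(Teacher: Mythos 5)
Your proof is correct and follows essentially the same route as the paper: expand $P(\mu+\nu\varepsilon)=P(\mu)+\nu\varepsilon P'(\mu)$ using $\varepsilon^2=0$, split along the basis $\{1,\varepsilon\}$, and conclude that $\mu$ is a common root of $P$ and $P'$. The only difference is that you spell out the final divisibility-by-$(\lambda-\mu)^2$ step, which the paper leaves implicit; that added care about the non-associative coefficient ring is fine but not a new idea.
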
	
	\begin{proof}
		Write $P(\lambda) = a_n\lambda^n + a_{n-1}\lambda^{n-1} \dots + a_0$. Since $\varepsilon^2= 0$, we can compute that $P(\mu+\nu\varepsilon) = P(\mu) + \nu\varepsilon( na_n\mu^{n-1}+ (n-1)a_{n-1}\mu^{n-2}+\dots a_1) = P(\mu) + \nu\varepsilon P'(\mu)$, where $P'$ denotes the formal derivative of $P$. Since $P(\mu+\nu\varepsilon)$ has to be zero, this implies $\mu$ is a root for $P$ and its formal derivative. This implies $\mu$ is a root of $P$ of multiplicity at least $2$.
	\end{proof}
	\begin{proposition}\label{prop:derimpliessolid}
		Let $(A,X)$ be  a primitive axial algebra of Jordan type $\tfrac{1}{2}$ over a field $\F$. If $a,b\in X$ such that $D_{a,b}$ is a derivation, then $\subalg$ is solid.
	\end{proposition}
	\begin{proof}
		We can assume $(a,b)=\tfrac{1}{4}$ and $\subalg$ is $3$-dimensional, by \cref{thm:solid14}. We may assume $\F$ is algebraically closed, by \cref{def:solid} and the fact $D_{a,b}$ is still a derivation after base change. Since $(\tau_a\tau_b)^3 = \tau_{a^{\tau_b}}\tau_{b^{\tau_a}} = \id_A$, the automorphism $\rho=\tau_a\tau_b$ has order $3$.
		
		Clearly $\subalg$ is toric unless $\kar \F=3$, and then $\subalg$ is baric. We will set $\mathcal{P}_{x,y} = \lambda^2P^t_{x,y}, \mathcal{Q}_x = \lambda^2Q^t_x\in A[\lambda]$ if $\kar \F \neq 3$ and $\mathcal{P}_{x,y} = P^u_{x,y}, \mathcal{Q}_x = Q^u_x\in A[\lambda]$ if $\kar \F = 3$ for all $x,y\in A$, so $\mathcal{P}_{x,y},\mathcal{Q}_x$ are polynomial maps.
		
		By \cref{prop:dualderiv} and \eqref{eq:derivifaut}, $\chi = \id_{A_{\F[\varepsilon]}} - 8\varepsilon [L_{a}, L_v]$ is an automorphism of $ A\otimes_{\F}\F[\varepsilon]$, where $v=e-f$ if $\kar \F \neq 3$ and $v=2(ab-a)$ if $\kar \F =3$. This means that if $a_\lambda$ is an axis of $ A\otimes_{\F}\F[\varepsilon]$, then $a_\lambda^{\chi}$ is also an axis of $ A\otimes_{\F}\F[\varepsilon]$. By \cref{rem:miyactionR}, since $\chi$ restricted to $\subalg_R$ is  $\tau_{a}\tau_{a+\varepsilon v}$, we have $a_\lambda^{\chi}   = a_{\lambda(1+2\varepsilon)} $ if $\kar \F \neq 3$ and $a_\lambda^{\chi}=a_{\lambda+2\varepsilon}$ if $\kar \F =3$. 
		 
		 As $a_\lambda$ is an axis for at least $3$ different values of $\lambda \in \F$, the polynomials $\mathcal{P}_{x,y}, \mathcal{Q}_x\in A[\lambda]$ have at least 6 roots for every $x,y \in A$ by \cref{lem:polytocheck,lem:multiplicity2}. 
		  Since $\deg_\lambda \mathcal{P}_{x,y},\deg_\lambda \mathcal{Q}_x \leq 4$, this means $\mathcal{P}_{x,y}=\mathcal{Q}_x =0$ for all $x,y\in A$. By \cref{lem:polytocheck}, $\subalg$ is solid.
	\end{proof}
	Now \cref{cor:derall,prop:derimpliessolid} together show \cref{thm:solidiffderiv}.
	\section{Proof of \cref{thm:allsolidisjordan}}\label{sec:thm13}
	Next, we need almost Jordan rings. Such rings were first studied by Osborn \cite{identityfour}. 
	\begin{definition}
		A non-associative ring $R$ is \emph{almost Jordan} if 
		\begin{equation}\label{id4}
			2((yx) x) x + yx^3 = 3(yx^2)x \text{ for all } x,y\in R.
		\end{equation}
	\end{definition}
	
	\begin{lemma}\label{lem:deralmostjord}
		A non-associative commutative ring $R$ with $2\in R^{\times}$ is almost Jordan if and only if $D_{x,y}$ is a derivation of $R$ for all $x,y\in R$.
	\end{lemma}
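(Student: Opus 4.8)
The plan is to establish the equivalence by a direct expansion of both conditions into multilinear identities and comparing them. The key observation is that the almost Jordan identity \eqref{id4} is a univariate-looking identity, but since $R$ is commutative with $2 \in R^\times$, it is equivalent to its full linearization, which is a symmetric multilinear identity in several variables. On the other side, the condition that $D_{x,y}$ is a derivation for all $x,y$ is, by definition, the statement that $D_{x,y}(uv) = D_{x,y}(u)\,v + u\,D_{x,y}(v)$ for all $x,y,u,v$; this is already multilinear (separately in each of $x,y$ and in $u,v$). So I would first reduce the almost Jordan identity to a linearized form, then show the linearized form is literally a rearrangement of the derivation identity.

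Concretely, first I would set $y = u$ and linearize \eqref{id4} in $x$: replacing $x$ by $x_1 + x_2 + x_3$ and extracting the fully multilinear component (legitimate since $6 = 3! $ is invertible, as $2,3 \in R^\times$... though note that when $\kar R = 3$ one must be slightly more careful — here I would instead linearize only partially, replacing $x$ by $x + z$ and collecting the terms that are bilinear in $x$ and linear in $z$, which only requires $2 \in R^\times$). This yields an identity of the shape
\[
  \text{(symmetrization over the two $x$'s and one $z$ of)}\quad 2\big((u x)z\big)x + \cdots = 3\big((u x)x\big)z + \cdots,
\]
which, after collecting terms using commutativity, should read as a symmetric sum of associator-type expressions. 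Then I would recognize that $D_{x,z}(u\cdot w) - D_{x,z}(u)\,w - u\,D_{x,z}(w)$, expanded via $D_{x,z}(t) = x(zt) - z(xt)$ and fully written out, is exactly (a scalar multiple of) the difference of the two sides of this linearized identity, upon setting $w$ appropriately or by a further comparison of multilinear components. In other words, the derivation defect of $D_{x,z}$ and the linearized almost-Jordan defect are the same multilinear expression up to sign and an invertible scalar.

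The main obstacle I anticipate is purely bookkeeping: matching the two multilinear expressions term by term, keeping track of which variables are symmetrized and making sure no factor of $2$ or $3$ sneaks in that fails to be invertible in the stated generality ($\kar R \neq 2$ only — note $\kar R = 3$ is explicitly allowed). The cleanest route is probably to avoid full linearization in $x$ altogether: instead, directly expand $D_{x,y}(uv) - D_{x,y}(u)v - u D_{x,y}(v)$ into the six monomials it produces, symmetrize/specialize to reduce variables, and show this vanishes for all $x,y,u,v$ iff \eqref{id4} holds, by substituting $y = u = v = x$ in one direction (to recover \eqref{id4} up to an invertible scalar) and linearizing \eqref{id4} in the other. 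The delicate point is the backward direction — recovering the four-variable derivation identity from the one-variable identity \eqref{id4} — which is where the partial linearization in just one variable (needing only $2 \in R^\times$) does the work, and I would present that step in the most explicit form, writing out the substitution $x \mapsto x + \lambda z$ and reading off the coefficient of $\lambda$, rather than invoking a general linearization principle.
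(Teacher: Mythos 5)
The paper does not actually prove this lemma; it cites Osborn (p.\,1115, Eqs.\ (4) and (5)), so a self-contained argument is welcome, but yours has a real gap. First, a small slip in the easy direction: in the derivation defect $\Delta(u,v;p,q):=D_{u,v}(pq)-D_{u,v}(p)q-p\,D_{u,v}(q)$ you must keep two independent variables and identify only the two \emph{arguments}, e.g.\ $u=y$, $v=p=q=x$, which gives exactly $2((yx)x)x+yx^3-3(yx^2)x$; setting $y=u=v=x$ as written makes $D_{x,x}=0$ and yields nothing. The genuine gap is in the backward direction. The coefficient of $\lambda$ in \eqref{id4} under $x\mapsto x+\lambda z$ is $\Delta(y,z;x,x)+2\,\Delta(y,x;x,z)$, and the full multilinearization is the symmetrized sum $\Delta(y,x_1;x_2,x_3)+\Delta(y,x_2;x_1,x_3)+\Delta(y,x_3;x_1,x_2)$. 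These are combinations of several \emph{distinct} instances of $\Delta$, not scalar multiples of a single instance, so the central claim that the linearized almost-Jordan defect ``is exactly (a scalar multiple of) the derivation defect'' is false, and no term-by-term matching will make it true: an arbitrary quadrilinear map that is skew in its first pair and symmetric in its last pair can satisfy all of these symmetrized relations without vanishing (on four generic arguments the relations have rank $3$ among the six independent values, leaving a $3$-dimensional solution space). As outlined, your argument only proves the symmetrization of the derivation identity, which is strictly weaker; one must exploit the specific monomial structure of $\Delta$, not just its symmetries.

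The step can be repaired. Using commutativity alone one checks the identity $\Delta(a,b;c,d)=\Phi(b;a,c,d)-\Phi(a;b,c,d)$, where $\Phi(x;r,s,t):=r(x(st))+s(x(rt))+t(x(rs))$ is symmetric in $r,s,t$. The full linearization of \eqref{id4} (obtained by substitutions and subtractions and one division by $2$, so $2\in R^\times$ suffices and characteristic $3$ causes no trouble) then reads $\Phi(x_1;y,x_2,x_3)+\Phi(x_2;y,x_1,x_3)+\Phi(x_3;y,x_1,x_2)=3\,\Phi(y;x_1,x_2,x_3)$; subtracting the same identity with $y$ and $x_1$ interchanged and using the symmetry of $\Phi$ in its last three slots gives $4\bigl(\Phi(x_1;y,x_2,x_3)-\Phi(y;x_1,x_2,x_3)\bigr)=0$, i.e.\ $\Delta(y,x_1;x_2,x_3)=0$, which is precisely the statement that every $D_{x,y}$ is a derivation. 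With this (or an equivalent explicit computation) inserted in place of the claimed term-by-term identification, your outline becomes a complete proof; without it, the citation to Osborn remains necessary.
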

	\begin{proof}
		This is proven in \cite[p.\@ 1115, Equations~(4) and (5)]{identityfour}.
	\end{proof}
	\begin{corollary}\label{cor:almostsolid}
		Suppose $(A,X)$ is  a primitive axial algebra of Jordan type $\tfrac{1}{2}$, and $X$ spans $A$ linearly. If for every $a,b \in X$ the line $\llangle a,b \rrangle$ is solid, then $A$ is almost Jordan.
	\end{corollary}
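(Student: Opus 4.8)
The statement follows almost immediately by combining \cref{cor:derall} (solid lines give derivations $D_{a,b}$) with \cref{lem:deralmostjord} (almost Jordan $\Leftrightarrow$ all $D_{x,y}$ are derivations), together with a bilinearity argument. The plan is as follows.

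First I would observe that the assignment $(x,y)\mapsto D_{x,y}$ is a bilinear map from $A\times A$ to $\operatorname{End}_{\F}(A)$: indeed $D_{x,y}(z) = x(yz)-y(xz)$ is $\F$-linear in each of $x$, $y$, $z$ separately, so for fixed $z$ the map $(x,y)\mapsto D_{x,y}(z)$ is bilinear, hence so is $(x,y)\mapsto D_{x,y}$. Next, recall that the set $\operatorname{Der}(A)$ of derivations of $A$ is an $\F$-linear subspace of $\operatorname{End}_{\F}(A)$, being the kernel of the linear map $\operatorname{End}_\F(A)\to\operatorname{Hom}_\F(A\otimes A,A)$ sending $\delta$ to $(u,w)\mapsto \delta(uw)-\delta(u)w-u\delta(w)$.

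Now by hypothesis $\langle\langle a,b\rangle\rangle$ is solid for all $a,b\in X$, so \cref{cor:derall} gives $D_{a,b}\in\operatorname{Der}(A)$ for all $a,b\in X$. Since $X$ spans $A$ linearly, any $x,y\in A$ can be written as finite sums $x=\sum_i \alpha_i a_i$, $y=\sum_j\beta_j b_j$ with $a_i,b_j\in X$ and $\alpha_i,\beta_j\in\F$; by bilinearity $D_{x,y}=\sum_{i,j}\alpha_i\beta_j D_{a_i,b_j}$, which lies in the subspace $\operatorname{Der}(A)$. Thus $D_{x,y}$ is a derivation of $A$ for all $x,y\in A$. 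Since $\kar\F\neq 2$ and $A$ is commutative, \cref{lem:deralmostjord} then gives that $A$ is almost Jordan, as claimed.

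\textbf{Main obstacle.} There is essentially no obstacle here: the only thing to check carefully is the bilinearity of $(x,y)\mapsto D_{x,y}$ and the fact that derivations form a linear subspace, after which spanning does the rest. The real work has already been done in \cref{cor:derall} and \cref{lem:deralmostjord}; this corollary is the bookkeeping step that packages it, and the interesting mathematics will be in the subsequent use of almost-Jordan theory (via \cref{prop:almostisjordan}) to upgrade "almost Jordan" to "Jordan".
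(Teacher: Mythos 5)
Your proof is correct and follows essentially the same route as the paper: the paper likewise invokes the solid-implies-derivation direction (via \cref{thm:solidderiv}, i.e.\ \cref{cor:derall}), uses bilinearity of $(x,y)\mapsto D_{x,y}=[L_x,L_y]$ together with the fact that $X$ spans $A$, and concludes with \cref{lem:deralmostjord}. Your write-up just spells out the bilinearity/subspace bookkeeping that the paper leaves implicit.
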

	\begin{proof}
		For every $a,b \in X$, $D_{a,b}$ is a derivation by \cref{thm:solidiffderiv}. Since $X$ spans $A$, the map $[L_x,L_y]$ is a derivation for every $x,y\in A$. \cref{lem:deralmostjord} shows $A$ is almost Jordan. 
	\end{proof}
	 As was noted in \cite{identityfour}, the existence of idempotents often forces almost Jordan algebras to be Jordan. We prove a similar result that is applicable to our situation.
	\begin{proposition}\label{prop:almostisjordan}
		An almost Jordan algebra $A$ over a field $\F$ with $\kar \F\neq 2$ that is linearly spanned by primitive idempotents is a Jordan algebra.
	\end{proposition}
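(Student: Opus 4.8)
The plan is to run the classical Peirce-decomposition machinery for idempotents, in the spirit of Osborn's work \cite{identityfour}. The starting point is that if $e$ is any idempotent of $A$, then putting $x=e$ in the defining identity \eqref{id4} collapses it to $2L_e^{3}-3L_e^{2}+L_e=0$, where $L_e$ denotes multiplication by $e$. Since $2t^{3}-3t^{2}+t=t(2t-1)(t-1)$ is squarefree, $L_e$ is semisimple with $\Spec(L_e)\subseteq\{0,\tfrac12,1\}$, so we get a vector-space decomposition $A=A_1(e)\oplus A_{1/2}(e)\oplus A_0(e)$ into eigenspaces. Thus every idempotent of $A$ — not just the axes that appear in an axial algebra — carries a Peirce decomposition.

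The technical core is to promote this to the full list of Jordan Peirce multiplication rules relative to each idempotent $e$, namely $A_1A_1\subseteq A_1$, $A_0A_0\subseteq A_0$, $A_1A_0=0$, $A_iA_{1/2}\subseteq A_{1/2}$ for $i\in\{0,1\}$, and $A_{1/2}A_{1/2}\subseteq A_1\oplus A_0$ (all eigenspaces with respect to $e$). I would obtain these by linearizing \eqref{id4}, or equivalently from the operator form $[[L_x,L_y],L_u]=L_{D_{x,y}(u)}$ of the derivation property (\cref{lem:deralmostjord}), specializing one or two arguments to $e$ and testing on Peirce eigenvectors; this is the bookkeeping done on \cite[p.~1115]{identityfour} and is routine if somewhat long. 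Two consequences I would isolate for later use: (i) $[L_e,L_u]=0$ whenever $u\in A_1(e)\cup A_0(e)$, because such an $L_u$ then stabilizes every eigenspace of $L_e$; and (ii) $A_1(e)$ and $A_0(e)$ are almost Jordan subalgebras (with $A_1(e)$ unital, with unit $e$).

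To finish, I would work with the fully linearized Jordan identity $J(x,y,z):=[L_x,L_{yz}]+[L_y,L_{zx}]+[L_z,L_{xy}]$; the goal is $J\equiv 0$. Using commutativity together with the fact that each $D_{x,y}=[L_x,L_y]$ is a derivation, one checks that the quadrilinear map $(x,y,z,w)\mapsto J(x,y,z)(w)$ is symmetric in all four arguments. Hence, since $A$ is linearly spanned by idempotents, it suffices to prove $J(e,f,g)(h)=0$ for idempotents $e,f,g,h$. When two of these coincide, total symmetry lets us move the repeated pair into the first two slots, and then $J(e,e,f)=2[L_e,L_{ef}]+[L_f,L_e]$ collapses, after writing $f=f_1+f_{1/2}+f_0$ in the Peirce spaces of $e$, to $[L_e,L_{f_1}]-[L_e,L_{f_0}]$, which vanishes by (i). The remaining case of four pairwise distinct idempotents is the real obstacle: there I would exploit the Peirce decomposition with respect to one of them, use (i) to annihilate all terms coming from the $A_1$- and $A_0$-parts, reduce to a relation purely among the $A_{1/2}$-components, and then either close it directly via $A_{1/2}A_{1/2}\subseteq A_1\oplus A_0$ together with (ii) and an induction on $\dim A$, or else first deduce power-associativity of $A$ from the Peirce rules and invoke that a power-associative almost Jordan algebra is a Jordan algebra. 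I expect this last step to be the crux: almost Jordan algebras need not be Jordan, so the linearized Jordan identity is a genuinely global statement that cannot be read off from the Peirce data of individual idempotents, and the hypothesis that $A$ is spanned by idempotents has to enter essentially precisely here.
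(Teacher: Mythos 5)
The first two-thirds of your plan is sound: setting $x=e$ in \eqref{id4} does give $2L_e^3-3L_e^2+L_e=0$, hence the Peirce decomposition for an arbitrary idempotent (this is exactly Osborn's Equation (6), which the paper simply cites); the Peirce fusion rules you list do follow from the linearization of \eqref{id4} at $e$; your consequence (i) and the two-equal-idempotents computation $J(e,e,f)=[L_e,L_{f_1}]-[L_e,L_{f_0}]=0$ are correct; and the total symmetry of $(x,y,z,w)\mapsto J(x,y,z)(w)$ is a genuinely nice observation (the defect of symmetry in the last argument is exactly $xD_{w,z}(y)+yD_{w,z}(x)-D_{w,z}(xy)$, which vanishes because $D_{w,z}$ is a derivation). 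But the proof stops precisely where the theorem lives: you have no argument for $J(e,f,g)(h)=0$ when the four idempotents are pairwise distinct, and neither of your two fallback strategies closes it. Induction on $\dim A$ is not available, since the proposition makes no finite-dimensionality assumption (and the axial algebras it is applied to need not be finite-dimensional). The alternative, ``deduce power-associativity from the Peirce rules and invoke that a power-associative almost Jordan algebra is Jordan,'' begs the question twice: fourth-power associativity $x^3x=x^2x^2$ is equivalent (via your own symmetric form, by polarization, and only in characteristic $\neq 2,3$) to the identity $J\equiv 0$ you are trying to prove, so ``deducing power-associativity'' is not a reduction but a restatement of the hard step; and the polarization/linearization route breaks down in characteristic $3$, which the statement allows.

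It is also worth correcting your closing intuition: the identity \emph{can} be read off from the Peirce data of a single idempotent, and this is exactly how the paper proceeds. The paper keeps only the first argument $a$ of the linearized Jordan identity \cref{eq:jordan} as an idempotent (this is the only place the spanning hypothesis enters), decomposes the other three arguments $x,y,z$ into eigenvectors of $\ad(a)$, and checks the finitely many eigenvalue triples: the triple $(\tfrac12,\tfrac12,\tfrac12)$ follows from the derivation property of $D_{a,x}$ (\cref{lem:deralmostjord}), the triple $(\tfrac12,0,0)$ from Osborn's Peirce relation \cite[Lemma~2, Eq.~(9)]{identityfour}, and the remaining triples are elementary; characteristic $3$ is then handled by a separate citation after the multilinear identity is established. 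So the missing case in your write-up is not an unavoidable ``global'' obstruction but is exactly the eigenvalue case analysis you would need to carry out; as it stands, your proposal proves the easy degenerate cases and leaves the generic one open.
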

	\begin{proof}
		By \cite[Lemma~1]{identityfour}, if $a\in A$ is any idempotent, it is a Jordan type $\tfrac{1}{2}$ axis. Since $A$ is spanned by idempotents, it suffices to prove the linearized Jordan identity (\cite[p.95]{char3})
		\begin{equation}\label{eq:jordan}
			(yz)(ax)+(xy)(az) +(xz)(ay)-((yz)a)x -((xy)a)z- ((xz)a)y =0
		\end{equation}
		for all $x,y,z,a\in A$, by \cite[p.95 and Lemma~3.1]{char3}. Let $J(a,x,y,z)$ denote the left-hand side of \eqref{eq:jordan}. Because $A$ is spanned by primitive axes, we may assume $a$ is a primitive axis, and $x,y,z$ are eigenvectors of $L_a$ with eigenvalues $\lambda_x,\lambda_y,\lambda_z\in \{1,0,\tfrac{1}{2}\}$.  Then 
		\begin{equation}\label{eq:jordaneigvalues}
			J(a,x,y,z) = \lambda_x(yz)x + \lambda_z(xy)z + \lambda_y(xz)y-((yz)a)x-((xy)a)z- ((xz)a)y.
		\end{equation}
		Because of the symmetric roles of $x$, $y$, and $z$, we need to check this is zero for $(\lambda_x,\lambda_y,\lambda_z) = (1,1,1)$, $(1,1,\tfrac{1}{2})$, $(1,1,0)$, $(1,\tfrac{1}{2},\tfrac{1}{2})$, $(1,\tfrac{1}{2},0)$, $(1,0,0)$, $(\tfrac{1}{2},\tfrac{1}{2},\tfrac{1}{2})$, $(\tfrac{1}{2},\tfrac{1}{2},0)$, $(\tfrac{1}{2},0,0)$ and $(0,0,0)$.
		
		Since $a$ is primitive, if $\lambda_x  =1$ we can assume $x=a$, and \eqref{eq:jordaneigvalues} becomes equal to $a(yz)-a(a(yz)) - (\lambda_y - \lambda_z)^2yz$. If moreover $\lambda_y=1$, then as above we may assume $y=a$, and \eqref{eq:jordaneigvalues} becomes equal to $a(az)-a(a(az)) - (1 - \lambda_z)^2az = -(2\lambda_z^3-3\lambda_z^2+\lambda_z)z$, which is zero for $\lambda_z=0,1$ or $\tfrac{1}{2}$. If $x=a,\lambda_y = \tfrac{1}{2},\lambda_z = 0$, then $J(a,x,y,z) =a(yz)-a(a(yz)) - \tfrac{1}{4}yz  =\tfrac{1}{2}yz- \tfrac{1}{4}yz - \tfrac{1}{4}yz =0 $, since $yz$ is a $\tfrac{1}{2}$-eigenvector by the Jordan fusion law, and if $x=a,\lambda_y = \tfrac{1}{2},\lambda_z =  \tfrac{1}{2}$ then $a(yz) = \mu a$ for a certain $\mu\in \F$, by the Jordan fusion law and primitivity of $a$, so $J(a,x,y,z)= \mu a - \mu a^2 - (\lambda_y - \lambda_z)^2yz = 0$.
		
		On the other hand, if $\lambda_z=0$, then $xz$ and $yz$ are (possibly zero) $\lambda_x$- and $\lambda_y$-eigenvectors for $L_a$, respectively, by the Jordan fusion law. Then $ J(a,x,y,z) = \lambda_x(yz)x + \lambda_y(xz)y-\lambda_y(yz)x-((xy)a)z- \lambda_x(xz)y$. If then also $\lambda_y=0$, this becomes, by the same argument as above, equal to $\lambda_x(yz)x - \lambda_x(xy)z -\lambda_x(xz)y$, which is zero if $x=a$ or $\lambda_x= 0$. If $\lambda_x = \lambda_y = \tfrac{1}{2}$, we get $J(a,x,y,z) = \tfrac{1}{2}(yz)x + 0+ \tfrac{1}{2}(xz)y-\tfrac{1}{2}(yz)x-((xy)a)z- \tfrac{1}{2}(xz)y = 0$, since in this case $((xy)a)z = 0$ by the Jordan fusion law.
		
		If $(\lambda_x,\lambda_y,\lambda_z)=(\tfrac{1}{2},\tfrac{1}{2},\tfrac{1}{2})$, then $D_{a,x}(yz) = D_{a,x}(y)z+D_{a,x}(z)y$, since $A$ is almost Jordan.
		Writing this out, we get, since $a(x(yz)) = \tfrac{1}{2}x(yz)$ by the Jordan fusion law,
		\begin{equation*}
			\tfrac{1}{2}(yz)x -(a(yz))x  =   (a(xy))z -\tfrac{1}{2}(xy)z+ (a(xz))y- \tfrac{1}{2}(xz)y . 
		\end{equation*}
		This means \eqref{eq:jordaneigvalues} is zero for $(\lambda_x,\lambda_y,\lambda_z) = (\tfrac{1}{2},\tfrac{1}{2},\tfrac{1}{2})$.
		If $(\lambda_x,\lambda_y,\lambda_z)=(\tfrac{1}{2},0,0)$, then $((yz)a)x=0$, $((xy)a)z=\tfrac{1}{2}(xy)z$ and $((xz)a)y=\tfrac{1}{2}(xy)z$ by the Jordan fusion law. So \eqref{eq:jordaneigvalues} becomes equal to $\tfrac{1}{2}(yz)x - \tfrac{1}{2}(xy)z -\tfrac{1}{2}(xz)y $,	which is zero by \cite[Lemma~2]{identityfour}.	
	\end{proof}
	\begin{remark}
		When $(A,X)$ has a nondegenerate Frobenius form and $\kar\F\neq 2,3,5$, there is a less computational method to prove this, see \cite[Proposition~A.8]{chayet2020class}.
	\end{remark}
	We now generalize \cref{prop:almostisjordan} for $\kar \F \neq 3$ to obtain \cref{thm:allsolidisjordan}, by using \cref{thm:solidiffderiv}.
	 The following lemma is adapted from a conversation with Sergey Shpectorov.
	\begin{lemma}\label{solid3gen}
		Let $(A,X)$ be a primitive axial $\F$-algebra of Jordan type $\tfrac{1}{2}$  with $\kar\F \neq 2,3$. If $a,b,c \in X$ are three  different axes such that $\llangle a,b \rrangle$ and $\llangle a,c \rrangle$ are solid, then $\llangle a,c^{\tau_b} \rrangle$ is solid. Moreover, if not all the values $(a,b)$, $(a,c)$ and $(a,bc)$ are zero, then $\llangle b,c \rrangle$ is solid.
	\end{lemma}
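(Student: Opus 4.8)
The plan is to reduce everything to the characterization of solidness in terms of the associator maps $D_{x,y}$ being derivations (\cref{thm:solidderiv}), together with the fact that the collection of $x\in A$ for which $D_{a,x}$ is a derivation forms a linear subspace, and that this property is preserved under automorphisms. First I would observe that by \cref{thm:solidderiv}, the hypotheses say precisely that $D_{a,b}$ and $D_{a,c}$ are derivations of $A$, and the conclusion we want is that $D_{a,c^{\tau_b}}$ is a derivation (for the first claim) and that $D_{b,c}$ is a derivation (for the second). For the first claim: since $\tau_b$ is an automorphism of $A$, conjugating a derivation by $\tau_b$ yields a derivation, so $\tau_b^{-1} D_{a,c}\, \tau_b$ is a derivation. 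A direct check from $D_{x,y}(z)=x(zy)-y(zx)$ shows $\tau_b^{-1}D_{x,y}\tau_b = D_{x^{\tau_b},\,y^{\tau_b}}$, hence $D_{a^{\tau_b},\,c^{\tau_b}}$ is a derivation. Now $a^{\tau_b}$ is again an axis in $X$ (since $\tau_b$ permutes axes — this uses that $\langle\langle a,b\rangle\rangle$, or rather the ambient structure, sends axes to axes; more carefully $a^{\tau_b}\in a^{\langle\tau_b\rangle}\subseteq X$ because $\tau_b\in\Miy(A,X)$ and Miyamoto involutions permute the generating axes), but to land exactly on $c^{\tau_b}$ paired with $a$ I instead use the two solid lines $\langle\langle a,b\rangle\rangle$ and $\langle\langle a,c\rangle\rangle$ to build up, via the linearity of the derivation condition in the second slot, the relation needed. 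Concretely, $D_{a,b}$ a derivation means (by \cref{cor:derall} and its proof, via $[L_a,L_v]$ for $v$ a $\tfrac12$-eigenvector of $a$) that conjugation by $\tau_b$ fixes the class of such operators, and one pushes $D_{a,c}$ through $\tau_b$ to get $D_{a^{\tau_b},c^{\tau_b}}$; since $D_{a,b}$ derivation forces $a^{\tau_b}$ to interact with $a$ compatibly — more precisely $\tau_b$ restricted to $\langle\langle a,b\rangle\rangle$ and the Seress lemma give $a^{\tau_b}\in\langle a, v\rangle$ with $v\in A_{1/2}(a)$ — one concludes $D_{a,c^{\tau_b}}$ is itself a derivation, so $\langle\langle a, c^{\tau_b}\rangle\rangle$ is solid by \cref{thm:solidderiv}.

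For the second claim, $\langle\langle b,c\rangle\rangle$ solid: I would again use \cref{thm:solidderiv}, so it suffices to show $D_{b,c}$ is a derivation. The idea is to express $D_{b,c}$ as a linear combination of derivations we already control. Using \cref{lem:basiccomp} we can write $b$ and $c$ in terms of $a$-eigenvector data: $b = b_1 + b_0 + b_{1/2}$ with $b_1$ a multiple of $a$, etc. The associator $D_{b,c}$ is bilinear and alternating in $(b,c)$, so it expands into associators of the eigenvector components. Components in $A_1(a)=\langle a\rangle$ contribute $D_{a,\ast}$-type terms, which are derivations exactly when the corresponding line through $a$ is solid — and we are given that $\langle\langle a,b\rangle\rangle,\langle\langle a,c\rangle\rangle$ are solid, so $D_{a,b_{1/2}}$ and $D_{a,c_{1/2}}$ (hence $D_{a,b}$, $D_{a,c}$) are derivations. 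The cross terms involving $b_0,c_0\in A_0(a)$ and $b_{1/2},c_{1/2}\in A_{1/2}(a)$ need to be handled by the almost-Jordan identity available on the solid subalgebras, or by noting that the hypothesis ``not $(a,b)=(a,c)=(a,bc)=0$'' exactly excludes the degenerate configuration where $b_1=c_1=0$ and the relevant eigenvector product vanishes — i.e. it guarantees enough genuine $a$-content to solve for $D_{b,c}$ linearly in terms of $D_{a,-}$.

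The main obstacle I anticipate is the second claim, specifically the bookkeeping that turns ``$D_{a,b}$ and $D_{a,c}$ are derivations'' into ``$D_{b,c}$ is a derivation,'' and understanding precisely why the numerical condition $(a,b),(a,c),(a,bc)$ not all zero is exactly what is needed. The cleanest route is probably: work in $\langle\langle a,b,c\rangle\rangle$, use that the associator map is $\F$-trilinear and alternating so that $D_{b,c}$ lies in the span of $\{D_{x,y} : x,y\in\{a,b_{1/2},b_0,c_{1/2},c_0\}\}$; show each generator is a derivation using the solidity of $\langle\langle a,b\rangle\rangle$, $\langle\langle a,c\rangle\rangle$, $\langle\langle a,c^{\tau_b}\rangle\rangle$ (just proved) and the fact that $v$-type $\tfrac12$-eigenvectors generate the Miyamoto dynamics; and finally check that the one remaining ``purely $A_0(a)$-and-$A_{1/2}(a)$'' associator either vanishes or is a derivation unless we are in the forbidden all-zero case. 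The delicate point — which I would flag and treat carefully — is verifying that $a^{\tau_b}$ and $c^{\tau_b}$ are still axes of $A$ lying in $X$ (or at least that $\langle\langle a,c^{\tau_b}\rangle\rangle$ makes sense as a subalgebra to which \cref{thm:solidderiv} applies), which follows since $\tau_b\in\Aut(A)$ maps the Jordan-type-half axis $c$ to another Jordan-type-half axis and maps $a$ to $a^{\tau_b}\in X$.
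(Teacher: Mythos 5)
There is a genuine gap in both halves of your argument. For the first claim, conjugation does correctly show that $D_{a^{\tau_b},c^{\tau_b}}$ is a derivation, but to pass from this to $D_{a,c^{\tau_b}}$ you would need $D_{w,c^{\tau_b}}$ to be a derivation for $w=a^{\tau_b}-a$, an element of $\langle\langle a,b\rangle\rangle$ which is in general not a scalar multiple of $a$ (in the toric case $a^{\tau_b}$ has a nonzero $0$-eigenvector component for $a$); the appeal to the Seress lemma and to ``$a^{\tau_b}$ interacting compatibly with $a$'' does not produce this, and nothing in the hypotheses directly controls associators of $c^{\tau_b}$ against arbitrary elements of $\langle\langle a,b\rangle\rangle$. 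In the paper the statement about $\langle\langle a,c^{\tau_b}\rangle\rangle$ is deduced \emph{after} the second claim: one first establishes that $D_{x,c}$ is a derivation for \emph{every} $x\in\langle\langle a,b\rangle\rangle$, specializes to $x=a^{\tau_b}$, and only then conjugates by $\tau_b$; the excluded case $(a,b)=(a,c)=(a,bc)=0$ is handled separately using $(a,c^{\tau_b})=0\neq\tfrac{1}{4}$ and \cref{thm:solid14}. So your route for claim one presupposes essentially the whole of claim two.

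For the second claim, the proposed eigenvector decomposition of $D_{b,c}$ does not close: solidity of $\langle\langle a,b\rangle\rangle$ and $\langle\langle a,c\rangle\rangle$ only controls associators of pairs lying inside one of these two lines, whereas after expanding $D_{b,c}$ bilinearly the problematic terms pair a non-$a$ component of $b$ with a non-$a$ component of $c$, and no ``almost-Jordan identity on the solid subalgebras'' is available to handle these cross terms. The missing idea, which is the actual content of the paper's proof, is to manufacture \emph{cross} derivations: choose axes $p_1,p_2\in\langle\langle a,c\rangle\rangle$ and $q_1,q_2\in\langle\langle a,b\rangle\rangle$ which, together with $a$, span the respective lines and satisfy $(p_i,q_j)\neq\tfrac{1}{4}$ — possible when $\langle\langle a,b\rangle\rangle$ is not flat because its variety of axes is irreducible, so only finitely many of its axes pair to $\tfrac{1}{4}$ with a fixed $p_i$. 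Then \cref{thm:solid14} makes each $\langle\langle p_i,q_j\rangle\rangle$ solid, \cref{thm:solidderiv} converts this into $[L_{p_i},L_{q_j}]$ being derivations, and bilinearity of $(x,y)\mapsto[L_x,L_y]$ spreads the property to all of $\langle\langle a,b\rangle\rangle\times\langle\langle a,c\rangle\rangle$, in particular to $D_{b,c}$. This is also where the hypothesis that not $(a,b)=(a,c)=(a,bc)=0$ enters, and its role is different from your guess: it guarantees either that one of the two lines is non-flat (so the genericity argument applies), or, when both are flat but $(a,bc)\neq 0$, that one can instead use idempotents of the form $a+\lambda ab$ and $b+\mu ab$ to obtain enough derivations $[L_{\cdot},L_c]$ to solve linearly for $[L_b,L_c]$. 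None of this machinery appears in your proposal, so as written it establishes neither assertion.
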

	\begin{proof}
		We may assume $\mathbb{F}$ is algebraically closed, as in \cref{prop:derimpliessolid}. We will begin by proving the second claim.
		 Suppose first that $\llangle a,b\rrangle$ or $\llangle a,c\rrangle$ is not flat, say $\llangle a,b\rrangle$ is not flat without loss of generality. We want to find $p_1,p_2\in \llangle a,c\rrangle$ such that $a,p_1,p_2$ span $\llangle a,c \rrangle$ linearly and $(a,p_i)\neq \tfrac{1}{4}$ for $i=1,2$. If $\llangle a,c \rrangle $ is flat, the choice $p_1= c$, $p_2 = c+ac$ satisfies the properties above by \cref{lem:flatfacts,lem:allprimflat}. If $\llangle a,c \rrangle $ is baric, we can choose $p_1 = a+v+v^2$, $p_2 = a+2v+4v^2$ by \cref{lem:baricfacts,lem:allprimbaric} and the fact that $1\neq \tfrac{1}{4}$ since $\kar \F \neq 3$. If $\llangle a,c \rrangle $ is toric, we have a basis $e_c,\oneb_c,f_c$ for $\llangle a,c \rrangle$ with $a = e_c+\tfrac{1}{2}u_c+f_c$ as in \cref{lem:toricfacts}. Then by \cref{lem:toricfacts,lem:allprimtoric}, we can choose $p_1 = -e_c+\tfrac{1}{2}\oneb_c -f_c$, $p_2 = \mu e_c + \tfrac{1}{2}\oneb_c -\mu f_c$, where $\mu$ is a root of $x^2+1 \in \F[x]$, since $(a,p_1) = 0$ and $(a,p_2) = \tfrac{1}{2}$.
		 Next, we want to find axes  $q_1,q_2 \in \subalg$ such that $a,q_1,q_2$ span $\llangle a,b \rrangle$ linearly and $(p_i,q_j) \neq \tfrac{1}{4}$ for $i,j\in \{1,2\}$. 
		Write $a_\lambda,\, \lambda \in \F$ for the axes in $\subalg$ as in \cref{lem:allprimtoric} or \cref{lem:allprimbaric} when $\subalg$ is toric or baric respectively. Then the maps $\pi_i\colon \F\to \mathbb{F} \colon \lambda \mapsto (p_i,a_\lambda)$ for $i=1,2$ are rational functions with respect to $\lambda$. Since $(p_i,a) \neq \tfrac{1}{4}$ for $i=1,2$, the maps $\pi_i$, $i =1,2$ are not equal to the constant function $\tfrac{1}{4}$. So the number of axes $q\in \subalg$ with $(q,p_1) = \tfrac{1}{4}$ or $(q,p_2) = \tfrac{1}{4}$ is finite, since the rational maps $\pi_i - \tfrac{1}{4}$, $i = 1,2$ have only finitely many zeroes. Thus we can choose $q_1\in \llangle a,b \rrangle$ with $(q_1,p_1),(q_1,p_2)\neq \tfrac{1}{4}$ and $\dim_\F\langle a, q_1\rangle=2$. If $\dim_\F\subalg =2$, set $q_1=q_2$. If $\dim_\F\subalg=3$, by \cref{lem:allprimtoric,lem:allprimbaric}, there are infinitely many $q_2\in\llangle a,b\rrangle$ with $\dim_\F\langle a, q_1, q_2\rangle=3$, so, by the above, we can find axes $q_1,q_2 \in \subalg$ such that $a,q_1,q_2$ span $\llangle a,b \rrangle$ and $(p_i,q_j) \neq \tfrac{1}{4}$ for $i,j\in \{1,2\}$. 
		
		Now we have that $[L_a,L_{q_i}],[L_a,L_{p_i}]$ and $[L_{p_i},L_{q_j}]$ are derivations of the algebra $A$ for $i,j\in \{1,2\}$ by \cref{thm:solid14,thm:solidiffderiv}. Since $a,p_1,p_2$ span $\llangle a,c\rrangle$ and $a,q_1,q_2$ span $\llangle a,b\rrangle$, we have that $[L_x,L_y]$ is a linear combination of $[L_{p_i},L_{q_j}],[L_a,L_{q_i}],[L_a,L_{p_i}]$ for every $x\in \llangle a,b\rrangle, y \in \llangle a,c \rrangle$, so $[L_b,L_c]$ is also a derivation and $\llangle b,c \rrangle$ is solid.
		 
		If both $\subalg$ and $\llangle a,c \rrangle$ are flat but $(a,bc) = (c,ab) \neq 0$, then by a similar reasoning there exist $\lambda, \mu \in \mathbb{F}^{\times}$ such that $\llangle a + \lambda ab ,c \rrangle ,  \llangle b + \mu ab ,c \rrangle$ are solid, so $[L_a,L_c]$, $[L_{a+\lambda a b},L_c]$, $[L_{b+\mu a b},L_c] $ are derivations. Then $[L_b,L_c]$ is a derivation, so $\llangle b,c \rrangle$ is solid by \cref{thm:solidiffderiv}.
	
		For the first claim, in both cases above, we have for any $x\in \llangle a,b \rrangle $ that $D_{x,c}$ is a derivation. Hence $D_{a^{\tau_b},c}$ is a derivation, so $D_{a,c^{\tau_b}}$ is as well, since $\tau_b \in \Aut(A)$. If $(a,b)=(a,c)=(a,bc) =0$, then by \cref{lem:basiccomp}, $(a,c^b) = (a,c + 4(b,c)b - 4bc) = 0$, so $\llangle a,b^c \rrangle $ is again solid by \cref{thm:solid14}.
	\end{proof}
	We are now finally able to prove the remainder of \cref{thm:allsolidisjordan}.
	\begin{theorem}
		A primitive axial algebra $(A,X)$ of Jordan type $ \tfrac{1}{2}$ over a field $\mathbb{F}$ with $\kar \mathbb{F} \neq 2,3$ such that for all $a,b\in X$ the line $\subalg$ is solid, is Jordan.
	\end{theorem}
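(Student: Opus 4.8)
The plan is to reduce to the situation treated in the paragraph following \cref{prop:almostisjordan}, namely the case where the defining axis set spans $A$ linearly. Although $X$ itself need not span $A$, its Miyamoto orbit $\bar X \coloneqq X^{\Miy(A,X)}$ always does, and (using \cref{solid3gen}) all of its lines remain solid. Granting this, $(A,\bar X)$ is again a primitive axial algebra of Jordan type half, now with defining axes spanning $A$ linearly and with all of its lines solid, so \cref{cor:almostsolid} gives that $A$ is almost Jordan, and \cref{prop:almostisjordan} then gives that $A$ is a Jordan algebra.

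First I would check that $W \coloneqq \langle \bar X \rangle_{\F}$, the linear span of $\bar X$, is a subalgebra of $A$. By bilinearity of the product it suffices to see that $cd \in W$ for $c,d \in \bar X$, and \cref{lem:basiccomp}(i) gives $cd = \tfrac14(d - d^{\tau_c}) + (c,d)c$; writing $c = x^{g}$ with $x \in X$ and $g \in \Miy(A,X)$, we have $\tau_c = g^{-1}\tau_x g \in \Miy(A,X)$, hence $d^{\tau_c} \in \bar X \subseteq W$. Thus $W$ is a subalgebra containing $X$, and since $X$ generates $A$ we conclude $W = A$; in particular $A$ is linearly spanned by the idempotents in $\bar X$.

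Next I would prove that $\langle\langle c,d\rangle\rangle$ is solid for all $c,d \in \bar X$. Fix $a \in X$ and let $T_a$ be the set of Jordan type half axes $d$ of $A$ for which $\langle\langle a,d\rangle\rangle$ is solid (together with $a$ itself). By hypothesis $X \subseteq T_a$, and \cref{solid3gen}, whose proof only uses that the three axes involved are Jordan type half axes of $A$, shows that $b,c \in T_a$ forces $c^{\tau_b} \in T_a$. Letting $b$ range over $X \subseteq T_a$, the set $T_a$ is stable under $\tau_b$ for every $b \in X$, hence under all of $\Miy(A,X)$, so $T_a \supseteq X^{\Miy(A,X)} = \bar X$. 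Therefore $\langle\langle a,d\rangle\rangle$ is solid for every $a \in X$ and $d \in \bar X$. For an arbitrary $c \in \bar X$, write $c = a^{g}$ with $a \in X$ and $g \in \Miy(A,X)$; then $d^{g^{-1}} \in \bar X$, so $\langle\langle a, d^{g^{-1}}\rangle\rangle$ is solid, and applying the automorphism $g$ (which maps Jordan type half axes to Jordan type half axes, hence solid lines to solid lines) shows that $\langle\langle c,d\rangle\rangle$, being the image of $\langle\langle a, d^{g^{-1}}\rangle\rangle$ under $g$, is solid.

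With these two steps in hand, $(A,\bar X)$ satisfies the hypotheses of \cref{cor:almostsolid}, so $A$ is almost Jordan, and then \cref{prop:almostisjordan} (using that $A$ is spanned by the idempotents in $\bar X$) gives that $A$ is Jordan. The step I expect to be delicate is the bootstrapping above: one must make sure that \cref{solid3gen} genuinely applies with axes lying outside $X$, i.e.\@ that its proof uses nothing about $a,b,c$ beyond their being Jordan type half axes with the two relevant lines solid (in particular, that a $2$-generated subalgebra of a solid — hence Jordan — line is automatically solid). A minor secondary point is the multiplicative closure of $\langle\bar X\rangle_\F$, which rests on $\tau_c \in \Miy(A,X)$ for every $c \in \bar X$.
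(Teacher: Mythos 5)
Your proof is correct and takes essentially the same route as the paper: pass to $\overline{X}=X^{\Miy(A,X)}$ (which spans $A$ linearly), propagate solidity to all lines on $\overline{X}$ by repeated use of \cref{solid3gen} along Miyamoto involutions, and conclude via \cref{cor:almostsolid} and \cref{prop:almostisjordan}. The only differences are minor: you prove the spanning of $\overline{X}$ directly rather than citing the literature, and you package the paper's induction on word length as stability of the sets $T_a$ under $\Miy(A,X)$; the point you flag about applying \cref{solid3gen} to axes outside $X$ is legitimate but equally implicit in the paper's own induction, whose use of that lemma likewise only requires the elements involved to be Jordan type half axes with the two relevant lines solid.
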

	\begin{proof}
		By \cite[Corollary~1.2]{primjordan}, the set $\overline{X} = X^{\Miy(X)}$ spans the axial algebra $A$ linearly. If we prove that for all $a,b\in \overline{X}$  the subalgebra $\llangle a,b \rrangle$ is solid, the result follows from \cref{cor:almostsolid,prop:almostisjordan}. We prove that for $a,b\in X$ and $g,h\in \Miy(X)$ that $\llangle a^g,b^h\rrangle$ is solid by induction on the length of $gh^{-1}$ in terms of the generators $\tau_d$, with $d\in X$. Let $a,b,c\in X$. Then $\llangle a,b^{\tau_c} \rrangle$ is solid by \cref{solid3gen}. Then$\llangle a^{\tau_{c_1}\dots \tau_{c_k}} , b^{\tau_{c_{n}} \dots \tau_{c_{k+1}}} \rrangle$ is solid if and only if $\llangle a^{\tau_{c_1}\dots \tau_{c_n}} , b\rrangle$ is. By induction, $\llangle a^{\tau_{c_1}\dots \tau_{c_{n-1}}} , b\rrangle$ and $\llangle a^{\tau_{c_1}\dots \tau_{c_{n-1}}} , c_n\rrangle$ are solid, so by \cref{solid3gen}, so is $\llangle a^{\tau_{c_1}\dots \tau_{c_{n-1}}} , b^{\tau_{c_{n}}}\rrangle$. So  $\llangle a^{\tau_{c_1}\dots \tau_{c_{n}}} , b\rrangle $ is solid as well.
	\end{proof}

	\bibliographystyle{plain}
	\bibliography{sources.bib}	
\end{document}